\renewcommand{\setminus}{{\smallsetminus}}
\newcommand{\cp}[1]{\vcenter{\hbox{#1}}}
\newtheorem{theorem}{Theorem}[section]
\newtheorem{lemma}[theorem]{Lemma}
\newtheorem{proposition}[theorem]{Proposition}
\newtheorem{corollary}[theorem]{Corollary}
\theoremstyle{remark}
\newtheorem{remark}[theorem]{Remark}
\theoremstyle{remark}
\numberwithin{equation}{section}
\title{\bf On the Volume Conjecture for hyperbolic Dehn-filled $3$-manifolds along the figure-eight knot}
\author{Ka Ho Wong \& Tian Yang}
\date{}
\begin{document}
\maketitle

\begin{abstract} Using Ohtsuki's method, we prove the Asymptotic Expansion Conjecture and the Volume Conjecture of the Reshetikhin-Turaev and the Turev-Viro invariants for all hyperbolic $3$-manifolds obtained by doing a Dehn-surgery along the figure-$8$ knot. \end{abstract}

%%%%%%%%%%%%%%%%%%%%%%%%%%%%%%%%%%%%%%%%%%%%%%%%%%%%%%%%%%%%%%%%%%%%%%%%%%%%%%%%%%%%%%%%%%%%%%%%%%%%%%%%%%%%%%%%%%%%%%%%%%%%%%%%%%%%%%%%%%%%%%%%%%%%%%%%%%%%%%%%%%%%%%%%%%%%%%%%%%%%%%%%
\section{Introduction}

In \cite{W}, Witten interpreted values of the Jones polynomial using the Chern-Simons gauge theory, and constructed a sequence of complex valued 3-manifold invariants satisfying striking properties. This idea was mathematically rigorously formalized by Reshetikhin and Turaev\,\cite{RT90,RT91} though the representation theory of quantum groups and surgery descriptions\,\cite{K} of $3$-manifolds. In \cite{TV}, Turaev and Viro developed a different approach from triangulations  constructing a sequence of real valued invariants of $3$-manifolds. These two invariants turned out to be closely related\,\cite{Tu, W, Ro}, and are expected to contain geometric and topological information of the manifold.

Kashaev's Volume Conjecture\,\cite{Ka1, Ka2} (see also Murakami-Murakami\,\cite{MM}) fulfilled such expectation by relating the colored Jones polynomials of a knot to the hyperbolic geometry of its complement. More precisely, the Volume Conjecture asserts that value of the $n$-th normalized colored Jones polynomial of a hyperbolic knot evaluated at the $n$-th primitive root of unit $t=e^{\frac{2\pi\sqrt{-1}}{n}}$ grows exponentially in $n,$ and the growth rate is proportional to the hyperbolic volume of the complement of the knot. Recently, Chen and the second author\,\cite{CY} conjectured, now known as the Chen-Yang volume conjecture, that for odd $r$ the values at the root of unity $q=e^{\frac{2\pi\sqrt{-1}}{r}}$ of the $r$-th Reshetikhin-Turaev and Turaev-Viro invariants of a hyperbolic $3$-manifold  grow exponentially in $r,$ with growth rate respectively proportional to the complex volume and the hyperbolic volume of the manifold.  This conjecture was later refined independently by Ohtsuki\,\cite{O2} and Gang-Romo-Yamazaki\,\cite{GRY} to include the adjoint twisted Reidemeister torsion\,\cite{P} of the manifold in the asymptotic expansion of the invariants.

In \cite{BDKY}, Belletti, Detcherry, Kalfagianni and the second author proved the Chen-Yang volume conjecture for the family of fundamental shadow link complements. The fundamental shadow link complements were shown\,\cite{CT} to form a universal class of $3$-manifolds in the sense that any orientable 3-manifold with empty or toroidal boundary is obtained from the complement of a fundamental shadow link by doing a Dehn-surgery along suitable components. Therefore, understanding the asymptotic behavior of the invariants under Dehn-surgeries becomes a necessary step toward the solution to the Chen-Yang volume conjecture.

An earlier work of Ohtsuki\,\cite{O2} was a result along this direction, where he obtained the asymptotic expansion of the Reshetikhin-Turaev invariants of all hyperbolic $3$-manifolds obtained by doing an integral Dehn-surgery along the figure-$8$ knot. Together with a sequence of his works\,\cite{O, OT1, OY, O3, OT2}, Ohtsuki developed a method of attacking Kashaev's and Chen-Yang's volume conjectures consisting of a circle of creative ideas including the use of Faddeev's quantum dilogarithm functions, the Poisson summation formula and the saddle point approximation.

The main result of this article is our first attempt to understand the asymptotic behavior of the Reshetikhin-Turaev and the Turaev-Viro invariants under Dehn-surgeries, which generalizes Ohtsuki's result from integral Dehn-surgeries to rational Dehn-surgeries along the figure-$8$ knot. We note that our approach also works for the integral Dehn-surgeries, and is up to details the same as Ohtsuki's. A new idea in our approach is a use of the reciprocity of generalized Gaussian sum in the simplification of our formula for the Reshetikhin-Turaev invariants. Another new feature  is that we clarify the geometric meaning of the critical values of certain involved functions relating them to the desired geometric quantities (see Section \ref{geometry}), which previously could only be done by numerical computations. The argument in Section \ref{computation} can be directly applied to rationally Dehn-filled $3$-manifold along any other knot, and together with Section \ref{geometry} provide  a reinforcement of Ohtsuki's method.

\begin{theorem}\label{main1} Let $M$ be a closed oriented hyperbolic $3$-manifold obtained by a doing Dehn-surgery along the figure-$8$ knot, and let $\mathrm{RT}_r(M)$ be its $r$-th Reshetikhin-Turaev invariant evaluated at the root $q=e^{\frac{2\pi\sqrt{-1}}{r}}.$ Then as $r$ varies along positive odd integers,
$$\mathrm{RT}_r(M)=\frac{C_r}{2}\frac{1}{\sqrt{\mathrm{Tor}(M;\mathrm{Ad}_\rho)}}e^{\frac{r}{4\pi}\big(\mathrm{Vol}(M)+\sqrt{-1}\mathrm{CS}(M)\big)}\Big(1+O\Big(\frac{1}{r}\Big)\Big),$$
where $C_r$ is a constant of norm $1$ independent of the geometric structure of $M,$ $\mathrm{Tor}(M;\mathrm{Ad}_\rho)$ is the adjoint twisted Reidemeister torsion, $\mathrm{Vol}(M)$ is the hyperbolic volume and $\mathrm{CS}(M)$ is the Chern-Simons invariant of $M.$ As a consequence,
$$\lim_{r\to \infty}\frac{4\pi}{r}\log \mathrm{RT}_r(M)=\mathrm{Vol}(M)+\sqrt{-1}\mathrm{CS}(M)\quad(\text{mod }\sqrt{-1}\pi^2\mathbb Z).$$
\end{theorem}

It is proved in \cite{Tu, W, Ro}, and at the root $q=e^{\frac{2\pi\sqrt{-1}}{r}}$ in \cite{DKY}, that for a closed oriented $3$-manifold, the Turaev-Viro invariant is up to a scalar the square of the norm of the Reshetikhin-Turaev invariant.  As a consequence, we have

\begin{theorem}\label{main2} Let $M$ be a closed oriented hyperbolic $3$-manifold obtained by doing a rational Dehn-surgery along the figure-$8$ knot, and let $\mathrm{TV}_r(M)$ be its $r$-th Turaev-Viro invariant evaluated at the root $q=e^{\frac{2\pi\sqrt{-1}}{r}}.$ Then as $r$ varies along positive odd integers,
$$ \mathrm{TV}_r(M)=\frac{2^{b_2(M)-b_0(M)}}{\big|\mathrm{Tor}(M;\mathrm{Ad}_\rho)\big|}e^{\frac{r}{2\pi}\mathrm{Vol}(M)}\Big(1+O\Big(\frac{1}{r}\Big)\Big),$$
where $b_0(M)$ and $b_2(M)$ are respectively the zeroth and the second $\mathbb Z_2$ Betti-number of $M.$ As a consequence,  
$$\lim_{r\to \infty}\frac{2\pi}{r}\log \mathrm{TV}_r(M)=\mathrm{Vol}(M).$$
\end{theorem}
\bigskip

\noindent\textbf{Outline of the proof.} The proof follows the guideline of Ohtsuki's method. In Proposition \ref{formula}, we compute the Reshetikhin-Turaev invariants of $M$ and write them as a sum of values of a holomorphic function $f_r$ at integral points; and a key ingredient in the computation is Lemma \ref{sm} that  iteratively using a reciprocity of generalized Gaussian sums, we can simplify a multi-sum into a single sum. The function $f_r$ comes from Faddeev's quantum dilogarithm function. Using Poisson summation formula, we in Proposition \ref{Poisson} write the invariants as a sum of the Fourier coefficients of $f_r.$  In Proposition \ref{Vol} we show that the critical value of the functions in the two leading Fourier coefficients $\hat f_r(s^+,m^+,0)$ and $\hat f_r(s^-,m^-, 0)$ for $s^\pm$ and $m^\pm$ defined in Lemma \ref{arith} coincide with the complex volume of $M$ and the determinant of the Hessian matrix gives the adjoint twisted Reidemeister torsion of $M.$ The key observation is Lemma \ref{=} and Lemma \ref{==} that the system of critical point equations is equivalent to the system of hyperbolic gluing equations (consisting of an edge equation and a Dehn-surgery equation) for a particular ideal triangulation of the figure-$8$ knot complement. In Proposition \ref{leading} we verify the conditions for applying the saddle point approximation showing that the growth rate of the leading Fourier coefficients are those critical values, ie., the complex volume; and in Section \ref{estother}, we estimate the other Fourier coefficients. Finally, we complete the proof by showing in Proposition \ref{addup} that the two leading Fourier coefficient do not cancel each other and the sum of all the other Fourier coefficients are  neglectable, .
\\

\noindent\textbf{Acknowledgments.}  The authors would like to thank Francis Bonahon, Qingtao Chen, Effie Kalfagianni, Tomotada Ohtsuki and Hongbin Sun for helpful discussions. The second author is partially supported by NSF Grant DMS-1812008.

%%%%%%%%%%%%%%%%%%%%%%%%%%%%%%%%%%%%%%%%%%%%%%%%%%%%%%%%%%%%%%%%%%%%%%%%%%%%%%%%%%%%%%%%%%%%%%%%%%%%%%%%%%%%%%%%%%%%%%%%%%%%%%%%%%%%%%%%%%%%%%%%%%%%%%%%%%%%%%%
\section{Preliminaries}

\subsection{Reshetikhin-Turaev invariants}

In this article we will follow the skein theoretical approach of the Reshetikhin-Turaev invariants\,\cite{BHMV, Li} and focus on the case $q=e^{\frac{2\pi\sqrt{-1}}{r}},$ and equivalently $t=q^2=e^{\frac{4\pi\sqrt{-1}}{r}},$ for odd integers $r\geqslant 3.$

A framed link in an oriented $3$-manifold $M$ is a smooth embedding $L$ of a disjoint union of finitely many thickened circles $\mathrm S^1\times [0,\epsilon]$ for some $\epsilon>0$ into $M.$ The Kauffman bracket skein module $\mathrm K_r(M)$ of $M$ is the $\mathbb C$-module generated by the isotopy classes of framed links in $M$  modulo the following two relations: 

\begin{enumerate}[(1)]
\item  \emph{Kauffman Bracket Skein Relation:} \ $\cp{\includegraphics[width=1cm]{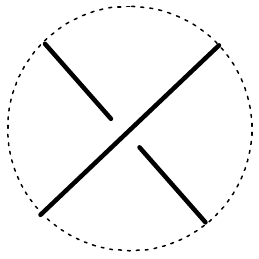}}\ =\ e^{\frac{\pi\sqrt{-1}}{r}}\ \cp{\includegraphics[width=1cm]{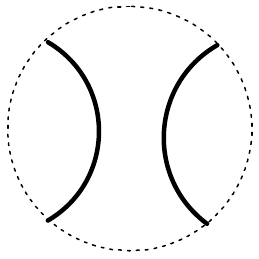}}\  +\ e^{-\frac{\pi\sqrt{-1}}{r}}\ \cp{\includegraphics[width=1cm]{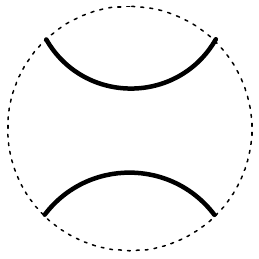}}.$ 

\item \emph{Framing Relation:} \ $L \cup \cp{\includegraphics[width=0.8cm]{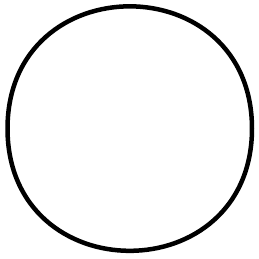}}=(-e^{\frac{2\pi\sqrt{-1}}{r}}-e^{-\frac{2\pi\sqrt{-1}}{r}})\ L.$ 
\end{enumerate}

There is a canonical isomorphism 
$$\langle\ \rangle:\mathrm K_r(\mathrm S^3)\to\mathbb C$$
defined by sending the empty link to $1.$ The image $\langle L\rangle$ of a framed link $L$ is called the Kauffman bracket of $L.$

Let $\mathrm K_r(A\times [0,1])$ be the skein module of the product of an annulus $A$ with a closed interval. For any link diagram $D$ in $\mathbb R^2$ with $k$ ordered components and $b_1, \dots, b_k\in \mathrm K_r(A\times [0,1]),$ let 
$$\langle b_1,\dots, b_k\rangle_D$$
be the complex number obtained by cabling $b_1,\dots, b_k$ along the components of $D$ considered as a element of $K_r(\mathrm S^3)$ then taking the Kauffman bracket $\langle\ \rangle.$

On $\mathrm K_r(A\times [0,1])$ there is a commutative multiplication induced by the juxtaposition of $A,$ making it a $\mathbb C$-algebra; and as a $\mathbb C$-algebra $\mathrm K_r(A\times [0,1])  \cong \mathbb C[z],$ where $z$ is the core curve of $A.$ For an integer $n\geqslant 0,$ let $e_n(z)$ be the $n$-th Chebyshev polynomial defined by the recursive relations
$e_0(z)=1,$ $e_1(z)=z$ and $e_n(z)=ze_{n-1}(z)-e_{n-2}(z).$ The Kirby coloring $\omega_r\in\mathrm K_r(A\times [0,1])$ is then defined by 
$$\omega_r=\sum_{n=0}^{r-2}(-1)^n[n+1]e_n,$$
where $[n]$ is the quantum integer defined by
$$[n]=\frac{e^{\frac{2n\pi\sqrt{-1}}{r}}-e^{-\frac{2n\pi\sqrt{-1}}{r}}}{e^{\frac{2\pi\sqrt{-1}}{r}}-e^{-\frac{2\pi\sqrt{-1}}{r}}}.$$

Suppose $M$ is obtained from $S^3$ by doing a surgery along a framed link $L,$ $D(L)$ is a standard diagram of $L,$ ie., the blackboard framing of $D(L)$ coincides with the framing of $L,$ and $\sigma(L)$ is the signature of the linking matrix of $L.$  Let $U_+$ be the diagram of the unknot with framing $1$ and let $$\mu_r=\frac{\sin\frac{2\pi}{r}}{\sqrt r}.$$
 Then the $r$-th Reshetikhin-Turaev invariant of $M$ is defined as
\begin{equation}\label{RT}
\mathrm{RT}_r(M)=\mu_r \langle \mu_r\omega_r, \dots, \mu_r\omega_r\rangle_{D(L)}\langle \mu_r \omega_r\rangle _{U_+}^{-\sigma(L)}.
\end{equation}

%%%%%%%%%%%%%%%%%%%%%%%%%%%%%%%%%%%%%%%%%%%%%%%%%%%%%%%%%%%%%%%%%%%%%%%%%%%%%%%%%%%%%%%%%%%%%%%%%%%%%%%%%%%%%%%%%%%%%%%%%%%%%%%%%%%%%%%%%%%%%%%%%%%%%%%%%%%%%%%

\subsection{Dilogarithm and Lobachevsky functions}

Let $\log:\mathbb C\setminus (-\infty, 0]\to\mathbb C$ be the standard logarithm function defined by
$$\log z=\log|z|+\sqrt{-1}\arg z$$
with $-\pi<\arg z<\pi.$
 
The dilogarithm function $\mathrm{Li}_2: \mathbb C\setminus (1,\infty)\to\mathbb C$ is defined by
$$\mathrm{Li}_2(z)=-\int_0^z\frac{\log (1-u)}{u}du$$
where the integral is along any path in $\mathbb C\setminus (1,\infty)$ connecting $0$ and $z,$ which is holomorphic in $\mathbb C\setminus [1,\infty)$ and continuous in $\mathbb C\setminus (1,\infty).$

The dilogarithm function satisfies the follow properties (see eg. Zagier\,\cite{Z})
\begin{equation}\label{Li2}
\mathrm{Li}_2\Big(\frac{1}{z}\Big)=-\mathrm{Li}_2(z)-\frac{\pi^2}{6}-\frac{1}{2}\big(\log(-z)\big)^2.
\end{equation} 
In the unit disk $\big\{z\in\mathbb C\,\big|\,|z|<1\big\},$ 
\begin{equation}\label{Li1}
\mathrm{Li}_2(z)=\sum_{n=1}^\infty\frac{z^n}{n^2},
\end{equation}
and on the unit circle $\big\{ z=e^{2\sqrt{-1}\theta}\,\big|\,0 \leqslant \theta\leqslant\pi\big\},$ 
$$\mathrm{Li}_2(e^{2\sqrt{-1}\theta})=\frac{\pi^2}{6}+\theta(\theta-\pi)+2\sqrt{-1}\Lambda(\theta),$$
where  $\Lambda:\mathbb R\to\mathbb R$  is the Lobachevsky function (see eg. Thurston's notes\,\cite[Chapter 7]{T}) defined by
$$\Lambda(\theta)=-\int_0^\theta\log|2\sin t|dt.$$

The Lobachevsky function  is an odd function of period $\pi.$ It achieves the absolute maximums at $k\pi +\frac{\pi}{6},$ $k\in \mathbb Z,$ and the absolute minimums at $k\pi+\frac{5\pi}{6},$ $k\in \mathbb Z.$ Moreover, it satisfies the functional equation 
\begin{equation*}
\frac{1}{2}\Lambda(2\theta)=\Lambda(\theta)+\Lambda\Big(\theta+\frac{\pi}{2}\Big).
\end{equation*}

%%%%%%%%%%%%%%%%%%%%%%%%%%%%%%%%%%%%%%%%%%%%%%%%%%%%%%%%%%%%%%%%%%%%%%%%%%%%%%%%%%%%%%%%%%%%%%%%%%%%%%%%%%%%%%%%%%%%%%%%%%%%%%%%%%%%%%%%%%%%%%%%%%%%%%%%%%%%%%%%%%%%%%%%%%%%%%%%%%%%%%%%

\subsection{Quantum dilogarithm functions}
We will consider the following variant of Faddeev's quantum dilogarithm functions\,\cite{F, FKV}. All the result in this section are essentially due to Kashaev and some of them could also be found in \cite{CM}. For the readers' convenience, we also include a proof here.

Let $r\geqslant 3$ be an odd integer. Then the following contour integral
\begin{equation}\label{qd}
\varphi_r(z)=\frac{4\pi\sqrt{-1}}{r}\int_{\Omega}\frac{e^{(2z-\pi)x}}{4x \sinh (\pi x)\sinh (\frac{2\pi x}{r})}\ dx
\end{equation}
defines a holomorphic function on the domain $$\Big\{z\in \mathbb C \ \Big|\ -\frac{\pi}{r}<\mathrm{Re}z <\pi+\frac{\pi}{r}\Big\},$$  
  where the contour is
$$\Omega=\big(-\infty, -\epsilon\big]\cup \big\{z\in \mathbb C\ \big||z|=\epsilon, \mathrm{Im}z>0\big\}\cup \big[\epsilon,\infty\big),$$
for some $\epsilon\in(0,1).$
Note that the integrand has poles at $n\sqrt{-1},$ $n\in\mathbb Z,$ and the choice of  $\Omega$ is to avoid the pole at $0.$
\\

The function $\varphi_r(z)$ satisfies the following fundamental property.
\begin{lemma}
\begin{enumerate}[(1)]
\item For $z\in\mathbb C$ with  $0<\mathrm{Re}z<\pi,$
\begin{equation}\label{fund}
1-e^{2 \sqrt{-1} z}=e^{\frac{r}{4\pi\sqrt{-1}}\Big(\varphi_r\big(z-\frac{\pi}{r}\big)-\varphi_r\big(z+\frac{\pi}{r}\big)\Big)}.
 \end{equation}
 
 \item For $z\in\mathbb C$ with  $-\frac{\pi}{r}<\mathrm{Re}z<\frac{\pi}{r},$
 \begin{equation}\label{f2}
1+e^{r \sqrt{-1}z}=e^{\frac{r}{4\pi\sqrt{-1}}\Big(\varphi_r(z)-\varphi_r\big(z+\pi\big)\Big)}.
\end{equation}
\end{enumerate}
\end{lemma}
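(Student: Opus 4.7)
The plan is to prove both identities by simplifying the exponent $\varphi_r(a)-\varphi_r(b)$ to a single contour integral whose integrand has only one $\sinh$ factor in the denominator, and then evaluating by residues. The two parts are structurally identical, differing only in which of the two hyperbolic sines in the denominator cancels against the numerator.

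For part (1), combining the two defining integrals gives
\begin{equation*}
\varphi_r\bigl(z-\tfrac{\pi}{r}\bigr)-\varphi_r\bigl(z+\tfrac{\pi}{r}\bigr)=\frac{4\pi i}{r}\int_\Omega\frac{e^{(2z-\pi)x}\bigl(e^{-2\pi x/r}-e^{2\pi x/r}\bigr)}{4x\sinh(\pi x)\sinh(2\pi x/r)}\,dx,
\end{equation*}
and the identity $e^{-2\pi x/r}-e^{2\pi x/r}=-2\sinh(2\pi x/r)$ collapses this to $-\frac{2\pi i}{r}\int_\Omega\frac{e^{(2z-\pi)x}}{x\sinh(\pi x)}\,dx$. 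Under the hypothesis $0<\mathrm{Re}\,z<\pi$, the exponential growth rate $|2\mathrm{Re}\,z-\pi|$ of the numerator is strictly smaller than the decay rate $\pi$ of $1/\sinh(\pi x)$, so (after temporarily assuming $\mathrm{Im}\,z>0$ to obtain decay away from the real axis) I may close $\Omega$ with a large semicircle in the upper half-plane. The closed contour encloses precisely the simple poles $x=ni$ of $1/\sinh(\pi x)$ with $n\geqslant 1$, since the double pole at $x=0$ lies outside, having been avoided by the small $\Omega$-semicircle. The residues combine into a geometric series whose sum is $-\log(1-e^{2iz})$, and propagating constants yields $\varphi_r(z-\pi/r)-\varphi_r(z+\pi/r)=\frac{4\pi i}{r}\log(1-e^{2iz})$; exponentiating produces (1). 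The temporary assumption $\mathrm{Im}\,z>0$ is then removed by analytic continuation, since both sides of (1) are holomorphic on the stated strip.

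Part (2) follows the same recipe, now using $1-e^{2\pi x}=-2e^{\pi x}\sinh(\pi x)$ to cancel the other $\sinh$ factor and arriving at $-\frac{2\pi i}{r}\int_\Omega\frac{e^{2zx}}{x\sinh(2\pi x/r)}\,dx$. The constraint $|\mathrm{Re}\,z|<\pi/r$ supplies the required decay, and closing in the upper half-plane picks up simple poles at $x=nri/2$ for $n\geqslant 1$; the residues telescope to $\log(1+e^{riz})$ up to constants, and exponentiating gives (2). The main subtlety I anticipate is the justification of contour closure: it forces the temporary restriction $\mathrm{Im}\,z>0$ and a short case analysis on the argument of $x$ along the large semicircle, but in both parts it ultimately reduces to the comparison of two exponential rates dictated by the hypothesis on $\mathrm{Re}\,z$.
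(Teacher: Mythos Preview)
Your proof is correct and follows essentially the same approach as the paper: in both parts you form the difference of the defining integrals, cancel one of the two $\sinh$ factors, and then evaluate by residues at $x=ni$ (for part (1)) or $x=nri/2$ (for part (2)) to produce the logarithmic series. Your treatment is in fact slightly more careful than the paper's, which simply invokes the Residue Theorem without explicitly discussing the closure of $\Omega$ by a large semicircle or the temporary restriction $\mathrm{Im}\,z>0$ needed for convergence and decay.
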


\begin{proof} In the region enclosed by $\Omega$ in the upper half plane, the function $\frac{e^{(2z-\pi)x}}{2x\sinh(\pi x)}$ has simple poles at $x=n\sqrt{-1},$ $n\in\mathbb Z_+.$ Hence by the Residue Theorem, 
\begin{equation*}
\begin{split}
\frac{r}{4\pi\sqrt{-1}}\Big(\varphi_r\Big(z-\frac{\pi}{r}\Big)-\varphi_r\Big(z+\frac{\pi}{r}\Big)\Big)&=-\int_\Omega\frac{e^{(2z-\pi)x}}{2x\sinh(\pi x)}dx\\
&=-\sum_{n=1}^\infty 2\pi\sqrt{-1}\cdot\mathrm {Res}_{x=n\sqrt{-1}}\Big(\frac{e^{(2z-\pi)x}}{2x\sinh(\pi x)}\Big)\\
&=-\sum_{n=1}^\infty \frac{(e^{2\sqrt{-1}z})^n}{n}=\log(1-e^{2\sqrt{-1}z}),
\end{split}
\end{equation*}
which proves (1).

In the same region, the function $\frac{e^{2zx}}{2x\sinh(\frac{2\pi x}{r})}$ has simple poles at $x=\frac{rn\sqrt{-1}}{2},$ $n\in\mathbb Z_+.$ Hence by the Residue Theorem, 
\begin{equation*}
\begin{split}
\frac{r}{4\pi\sqrt{-1}}\Big(\varphi_r(z)-\varphi_r\big(z+\pi\big)\Big)&=-\int_\Omega\frac{e^{2zx}}{2x\sinh(\frac{2\pi x}{r})}dx\\
&=-\sum_{n=1}^\infty 2\pi\sqrt{-1}\cdot\mathrm {Res}_{x={\frac{rn\sqrt{-1}}{2}}}\Big(\frac{e^{2zx}}{2x\sinh(\frac{2\pi x}{r})}\Big)\\
&=-\sum_{n=1}^\infty \frac{(e^{r \sqrt{-1}z})^n}{(-1)^nn}=\log(1+e^{r \sqrt{-1}z}),
\end{split}
\end{equation*}
which proves (2).
\end{proof}

Using (\ref{fund}) and (\ref{f2}), for $z\in\mathbb C$ with $\pi+\frac{2(n-1)\pi}{r}< \mathrm{Re}z< \pi+\frac{2n\pi}{r},$ we can define $\varphi_r(z)$  inductively by the relation
\begin{equation}\label{extension}
\prod_{k=1}^n\Big(1-e^{2 \sqrt{-1} \big(z-\frac{(2k-1)\pi}{r}\big)}\Big)=e^{\frac{r}{4\pi\sqrt{-1}}\Big(\varphi_r\big(z-\frac{2n\pi}{r}\big)-\varphi_r(z)\Big)},
\end{equation}
extending $\varphi_r(z)$ to a meromorphic function on $\mathbb C.$  The poles of $\varphi_r(z)$ have the form $(a+1)\pi+\frac{b\pi}{r}$ or $-a\pi-\frac{b\pi}{r}$ for all nonnegative integer $a$ and positive odd integer $b.$

Let $t=e^{\frac{4\pi\sqrt{-1}}{r}},$
and let $$(t)_n=\prod_{k=1}^n(1-t^k).$$

\begin{lemma}\label{factorial}
\begin{enumerate}[(1)]
\item For $0\leqslant n \leqslant r-1,$
\begin{equation}
(t)_n=e^{\frac{r}{4\pi\sqrt{-1}}\Big(\varphi_r\big(\frac{\pi}{r}\big)-\varphi_r\big(\frac{2\pi n}{r}+\frac{\pi}{r}\big)\Big)}.
\end{equation}
\item For $\frac{r-1}{2}\leqslant n \leqslant r-1,$
\begin{equation} \label{move}
(t)_n=2e^{\frac{r}{4\pi\sqrt{-1}}\Big(\varphi_r\big(\frac{\pi}{r}\big)-\varphi_r\big(\frac{2\pi n}{r}+\frac{\pi}{r}-\pi\big)\Big)}.
\end{equation}
\end{enumerate}
\end{lemma}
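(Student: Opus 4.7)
The plan is to prove both parts by telescoping products, using (\ref{fund}) for part (1) and picking up the factor $2$ in part (2) by an application of (\ref{f2}). A preliminary observation I would make is that both (\ref{fund}) and (\ref{f2}), although originally stated only on a small strip, extend as identities between meromorphic functions on all of $\mathbb{C}$: once $\varphi_r$ is meromorphically continued via (\ref{extension}), both sides of each identity become meromorphic in $z$ and agree on an open set, so the identity theorem promotes them to global identities (away from the poles of $\varphi_r$).

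For part (1), I would proceed by induction on $n$. The case $n=0$ is immediate since both sides equal $1$. For the inductive step, I apply (the extended) (\ref{fund}) at $z = \tfrac{2\pi n}{r}$ to obtain
\begin{equation*}
1 - t^n \;=\; 1 - e^{2i \cdot 2\pi n/r} \;=\; e^{\frac{r}{4\pi i}\bigl(\varphi_r(\frac{(2n-1)\pi}{r}) - \varphi_r(\frac{(2n+1)\pi}{r})\bigr)},
\end{equation*}
and multiply by the inductive hypothesis for $(t)_{n-1}$; the intermediate values of $\varphi_r$ cancel telescopically, leaving $\varphi_r(\pi/r) - \varphi_r(\tfrac{(2n+1)\pi}{r})$ in the exponent, which is exactly the claimed formula since $\tfrac{(2n+1)\pi}{r} = \tfrac{2\pi n}{r} + \tfrac{\pi}{r}$.

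For part (2), having (1) in hand, it is enough to show that whenever $n \geqslant \tfrac{r-1}{2} + 1$,
\begin{equation*}
e^{\frac{r}{4\pi i}\bigl(\varphi_r(\frac{(2n+1)\pi}{r} - \pi)\,-\,\varphi_r(\frac{(2n+1)\pi}{r})\bigr)} = 2.
\end{equation*}
I would get this from (the extended) (\ref{f2}) applied at $z = \tfrac{(2n+1)\pi}{r} - \pi = \tfrac{(2n+1-r)\pi}{r}$: then $riz = i\pi(2n+1-r)$, and since both $r$ and $2n+1$ are odd, $2n+1-r$ is even, hence $1 + e^{riz} = 2$. Substituting into (\ref{f2}) yields the displayed identity, and combining with (1) both inserts the factor $2$ and shifts the argument of $\varphi_r$ by $-\pi$, giving exactly (\ref{move}).

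The only delicate point is justifying the use of (\ref{fund}) and (\ref{f2}) outside their original strips of validity—needed already in (1) when $n > \tfrac{r-1}{2}$ and essentially throughout (2)—but this is not a real obstacle, since (\ref{extension}) together with analytic continuation makes both relations global identities of meromorphic functions. Everything else is arithmetic: tracking that $2n+1-r$ is even under the oddness of $r$, and verifying that $\tfrac{(2n+1)\pi}{r}$ and $\tfrac{(2n+1)\pi}{r}-\pi$ avoid the poles of $\varphi_r$ listed after (\ref{extension}), which is easily checked from the stated form of those poles.
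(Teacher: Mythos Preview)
Your proof is correct and follows essentially the same approach as the paper: induction via (\ref{fund}) for part (1), and for part (2) applying (\ref{f2}) at $z=\frac{2\pi n}{r}+\frac{\pi}{r}-\pi$ after invoking analytic continuation, with the same parity check that $2n+1-r$ is even. The paper's proof is terser but the underlying argument is identical.
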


\begin{proof} Inductively using (\ref{fund}), we have (1). To see (2), we by (1) have
$$(t)_n=e^{\frac{r}{4\pi\sqrt{-1}}\Big(\varphi_r\big(\frac{\pi}{r}\big)-\varphi_r\big(\frac{2\pi n}{r}+\frac{\pi}{r}-\pi\big)\Big)}e^{\frac{r}{4\pi\sqrt{-1}}\Big(\varphi_r\big(\frac{2\pi n}{r}+\frac{\pi}{r}-\pi\big)-\varphi_r\big(\frac{2\pi n}{r}+\frac{\pi}{r}\big)\Big)}.$$
By analyticity, (\ref{f2}) holds for all $z$ that is not a pole. In particular, it holds for $z=\frac{2\pi n}{r}+\frac{\pi}{r}-\pi,$ and we have
$$e^{\frac{r}{4\pi\sqrt{-1}}\Big(\varphi_r\big(\frac{2\pi n}{r}+\frac{\pi}{r}-\pi\big)-\varphi_r\big(\frac{2\pi n}{r}+\frac{\pi}{r}\big)\Big)}=1+e^{r \sqrt{-1}\big(\frac{2\pi n}{r}+\frac{\pi}{r}-\pi\big)}=2,$$
which proves (2).
\end{proof} 

We consider (\ref{move}) because there are poles in $(\pi,2\pi),$ and we move everything into $(0,\pi)$ to avoid those poles.

The function $\varphi_r(z)$ and the dilogarithm function are closely related as follows.

\begin{lemma}\label{converge}  
\begin{enumerate}[(1)]
\item For every $z$ with $0<\mathrm{Re}z<\pi,$ 
\begin{equation}\label{conv}
\varphi_r(z)=\mathrm{Li}_2(e^{2\sqrt{-1}z})+\frac{2\pi^2e^{2\sqrt{-1}z}}{3(1-e^{2\sqrt{-1}z})}\frac{1}{r^2}+O\Big(\frac{1}{r^4}\Big).
\end{equation}
\item For every $z$ with $0<\mathrm{Re}z<\pi,$ 
\begin{equation}\label{conv}
\varphi_r'(z)=-2\sqrt{-1}\log(1-e^{2\sqrt{-1}z})+O\Big(\frac{1}{r^2}\Big).
\end{equation}
\item 
As $r\to \infty,$ $\varphi_r(z)$ uniformly converges to $\mathrm{Li}_2(e^{2\sqrt{-1}z})$ and $\varphi_r'(z)$ uniformly converges to $-2\sqrt{-1}\log(1-e^{2\sqrt{-1}z})$ on a compact subset of $\{z\in \mathbb C\ |\ 0<\mathrm{Re}z<\pi\}.$ 
\end{enumerate}
\end{lemma}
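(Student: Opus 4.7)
The plan is to expand $\frac{1}{\sinh(2\pi x/r)}$ as a power series in $1/r$, substitute it into the contour integral defining $\varphi_r(z)$, and evaluate the resulting simpler contour integrals by the Residue Theorem. Concretely, using the Taylor expansion $\frac{1}{\sinh y}=\frac{1}{y}-\frac{y}{6}+O(y^3)$ with $y=\frac{2\pi x}{r}$,
\begin{equation*}
\frac{4\pi i}{r}\cdot\frac{1}{\sinh(2\pi x/r)}=\frac{2i}{x}-\frac{4\pi^2 i x}{3 r^2}+O\!\left(\frac{x^3}{r^4}\right),
\end{equation*}
so that
\begin{equation*}
\varphi_r(z)=\frac{i}{2}\int_\Omega\frac{e^{(2z-\pi)x}}{x^2\sinh(\pi x)}\,dx-\frac{\pi^2 i}{3r^2}\int_\Omega\frac{e^{(2z-\pi)x}}{\sinh(\pi x)}\,dx+O\!\left(\frac{1}{r^4}\right).
\end{equation*}
The plan is then to close $\Omega$ by a large semicircle in the upper half plane of radius $R_N=N+\frac12$ (so as to pass between the poles $x=ni$) and apply the Residue Theorem.

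For the leading integral, the pole at $x=ni$ ($n\geqslant 1$) contributes residue $-\frac{e^{2niz}}{\pi n^2}$, giving $-2i\,\mathrm{Li}_2(e^{2iz})$, so the first term becomes $\mathrm{Li}_2(e^{2iz})$ by \eqref{Li1}. The second integral, with poles of residue $\frac{e^{2niz}}{\pi}$, sums to $\frac{2i e^{2iz}}{1-e^{2iz}}$, producing exactly $\frac{2\pi^2 e^{2iz}}{3(1-e^{2iz})}\cdot\frac{1}{r^2}$ and establishing (1). Closing the contour is legitimate on the strip $0<\mathrm{Re}\, z<\pi$: on $|x|=R_N$ in the upper half plane, $|\sinh(\pi x)|$ grows like $\tfrac12 e^{\pi R_N|\cos\theta|}$ (uniformly away from the imaginary axis, where the bound $|\sinh(\pi x)|\geqslant c$ holds thanks to the radius choice), which dominates $|e^{(2z-\pi)x}|=e^{(2\mathrm{Re}\, z-\pi)R_N\cos\theta-2\mathrm{Im}\, z\, R_N\sin\theta}$ precisely when $0<\mathrm{Re}\, z<\pi$. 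For (2), I would differentiate under the integral sign to get
\begin{equation*}
\varphi_r'(z)=\frac{2\pi i}{r}\int_\Omega\frac{e^{(2z-\pi)x}}{\sinh(\pi x)\sinh(2\pi x/r)}\,dx,
\end{equation*}
repeat the same expansion to reduce the leading order to $i\int_\Omega\frac{e^{(2z-\pi)x}}{x\sinh(\pi x)}\,dx$, and apply the Residue Theorem (residues $\frac{e^{2niz}}{\pi n i}$) to recognize the series $-2\log(1-e^{2iz})$.

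Finally for (3), I observe that all the estimates above are uniform on any compact $K\subset\{0<\mathrm{Re}\, z<\pi\}$: the implicit constants in the $O$-terms depend only on $\sup_{z\in K}|2z-\pi|$ and the distance from $K$ to the boundary strip, so the convergence is uniform. The main obstacle I anticipate is the rigorous justification of (a) the interchange of the asymptotic expansion of $\frac{1}{\sinh(2\pi x/r)}$ with the $x$-integration, since the expansion degrades for $|x|$ of order $r$, and (b) the uniform bound on the error integral $O(x^3/r^4)$ which must be handled by splitting $\Omega$ into $|x|\leqslant r^{1/2}$ (where the Taylor expansion is excellent) and $|x|\geqslant r^{1/2}$ (where the exponential decay from $\sinh(\pi x)$ overwhelms the polynomial growth). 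Once these are dispatched by a standard dominated-convergence argument on a deformed contour, assertions (1), (2) and hence (3) follow.
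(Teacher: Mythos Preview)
Your proposal is correct and follows essentially the same approach as the paper: expand $\frac{1}{\sinh(2\pi x/r)}$ in powers of $1/r$, then evaluate the resulting contour integrals by residues at $x=ni$, recognizing the series for $\mathrm{Li}_2(e^{2iz})$ and the geometric series for the $1/r^2$ term. The only minor differences are that the paper derives (2) simply by saying it ``follows from (1)'' (i.e.\ differentiating the asymptotic formula rather than differentiating under the integral first), and the paper handles the contour closing more implicitly---it sums the residues for $z$ with $|e^{2iz}|<1$ and then invokes analytic continuation to cover the whole strip $0<\mathrm{Re}\,z<\pi$, whereas you justify the vanishing of the semicircle contribution directly; you should note that your semicircle estimate near $\theta=\pi/2$ really only works for $\mathrm{Im}\,z>0$, so you will also want analytic continuation for the general case. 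Your added care about the interchange of the Taylor expansion with the $x$-integral (splitting at $|x|\sim r^{1/2}$) addresses a point the paper leaves tacit.
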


\begin{proof}
For (1), since 
$$\frac{1}{\sinh ( \frac{2\pi x}{r})}=\frac{r}{2\pi x}-\frac{\pi x}{3r}+O\Big(\frac{1}{r^3}\Big),$$ we have
$$\varphi_r(z)=\sqrt{-1}\int_\Omega\frac{e^{(2z-\pi)x}}{2x^2\sinh (\pi x)}dx-\frac{\pi^2\sqrt{-1}}{r^2}\int_\Omega\frac{e^{(2z-\pi)x}}{3\sinh (\pi x)}dx+O\Big(\frac{1}{r^4}\Big).$$
By the Residue Theorem, 
\begin{equation*}
\begin{split}
\sqrt{-1}\int_\Omega\frac{e^{(2z-\pi)x}}{2x^2\sinh (\pi x)}dx&=-2\pi\sum_{n=1}^\infty\mathrm {Res}_{x=n\sqrt{-1}}\Big(\frac{e^{(2z-\pi)x}}{2x^2\sinh (\pi x)}\Big)\\
&=\sum_{n=1}^\infty\frac{(e^{2\sqrt{-1}z})^n}{n^2}=\mathrm{Li}_2(e^{2\sqrt{-1}z}),
\end{split}
\end{equation*}
where the last equality holds by (\ref{Li1}) for $z$ so that $e^{2\sqrt{-1}z}$ is in the unit disk, and holds by analyticity for all $z$ with $0<\mathrm{Re}z<\pi.$

By the Residue Theorem again,
\begin{equation*}
\begin{split}
-\frac{\pi^2\sqrt{-1}}{r^2}\int_\Omega\frac{e^{(2z-\pi)x}}{3\sinh (\pi x)}dx&=\frac{2\pi^3}{r^2}\sum_{n=1}^\infty\mathrm {Res}_{x=n\sqrt{-1}}\Big(\frac{e^{(2z-\pi)x}}{3\sinh (\pi x)}\Big)\\
&=\frac{2\pi^2}{3r^2}\sum_{n=1}^\infty(e^{2\sqrt{-1}z})^n=\frac{2\pi^2e^{2\sqrt{-1}z}}{3(1-e^{2\sqrt{-1}z})}\frac{1}{r^2}.
\end{split}
\end{equation*}
This proves (\ref{conv}). 

(2) follows from (1), and (3) follows from (1) and (2).
\end{proof}

\subsection{A geometric proposition}

\begin{proposition}\label{small} There is an $\epsilon>0$ such that for any relatively prime pair $(p,q)\neq(\pm 5, \pm 1)$ so that the closed oriented $3$-manifold $M$ obtained by doing a $\frac{p}{q}$ Dehn-surgery along the figure-$8$ knot $K_{4_1}$  is hyperbolic,  
$$\mathrm{Vol}(M)>\frac{1}{2}\mathrm{Vol}(S^3\setminus K_{4_1})+\epsilon.$$
\end{proposition}

\begin{proof}
By Futer-Kalfagianni-Purcell\,\cite[Theorem 1.1]{FKP}, if $M$ is obtained from the complement of a hyperbolic knot $K$ in $\mathrm S^3$ by a Dehn-surgery along a boundary curve $\gamma,$ then 
$$\mathrm{Vol}(M)\geqslant \Big(1-\Big(\frac{2\pi}{L(\gamma)}\Big)^2\Big)^{\frac{3}{2}}\mathrm{Vol}(\mathrm S^3\setminus K),$$
where $L(\gamma)$ is the length of $\gamma$ in the induced Euclidean metric on the boundary of the embedded horoball neighborhood of the cusp. For the $K_{4_1}$ complement, the boundary of the maximum horoball neighborhood is a tiling of eight regular triangles of side $1.$ Hence as drawn in Figure \ref{holonomy} (see also Thurston's notes \cite{T}), $L(x)=4$ and $L(y)=1.$ As a consequence, the meridian $m=y$ and the longitude $l=x+2y$ are perpendicular, $L(m) =1,$ $L(l) =2\sqrt3$ and $L(pm+ql)=\sqrt{p^2+12q^2}.$ As a consequence,
$$\mathrm{Vol}(M)\geqslant\Big(1-\frac{4\pi^2}{p^2+12q^2}\Big)^{\frac{3}{2}}\mathrm{Vol}(S^3\setminus K_{4_1}).$$
If $p^2+12q^2>\frac{4\pi^2}{1-\big(\frac{1}{2}\big)^{\frac{2}{3}}}\approx 106.67,$ then $\Big(1-\frac{4\pi^2}{p^2+12q^2}\Big)^{\frac{3}{2}}>\frac{1}{2}$
and
$$\mathrm{Vol}(M)>\frac{1}{2}\mathrm{Vol}(S^3\setminus K_{4_1}).$$
Therefore, by the symmetry of $K_{4_1}$ complement, we only need to check for the pairs $(p,q)=(6, 1),$ $(7, 1),$ $(8, 1),$ $(9, 1)$ $(1, 2),$ $(3, 2),$ $(5, 2)$ and $(7, 2)$ where $p^2+12q^2<107,$ which could be numerically done by using SnapPy\,\cite{D}. \end{proof}

\begin{remark}\label{only} The end of the proof of Proposition \ref{small} is the only place in this article where we need a numerical computation.
\end{remark}

%%%%%%%%%%%%%%%%%%%%%%%%%%%%%%%%%%%%%%%%%%%%%%%%%%%%%%%%%%%%%%%%%%%%%%%%%%%%%%%%%%%%%%%%%%%%%%%%%%%%%%%%%%%%%%%%%%%%%%%%%%%%%%%%%%%%%%%%%%%%%%%%%%%%%%%%%%%%%%%%%%%%%%%%%%%%%%%%%%%%%%%%

\section{Computation of the Reshetikhin-Turaev invariants}\label{computation}

The main result of this section is  Proposition \ref{formula} where we compute the Reshetikhin-Turaev invariants of the closed oriented $3$-manifold obtained by doing a $\frac{p}{q}$ Dehn-surgery along the figure-$8$ knot $K_{4_1}.$ Recall that if $M$ is the  $3$-manifold obtained from $S^3$ by doing a $\frac{p}{q}$ Dehn-surgery along a knot $K,$ then it can also be obtained by doing a surgery along a framed link $L$ (see Figure \ref{link}) of $k$ components with framings $a_1,\dots, a_k$ coming from the continued fraction
$$\frac{p}{q}=a_k-\frac{1}{a_{k-1}-\frac{1}{\cdots-\frac{1}{a_1}}}.$$ See eg. \cite[p.273]{R}. 
\begin{figure}[htbp]
\centering
\includegraphics[scale=0.4]{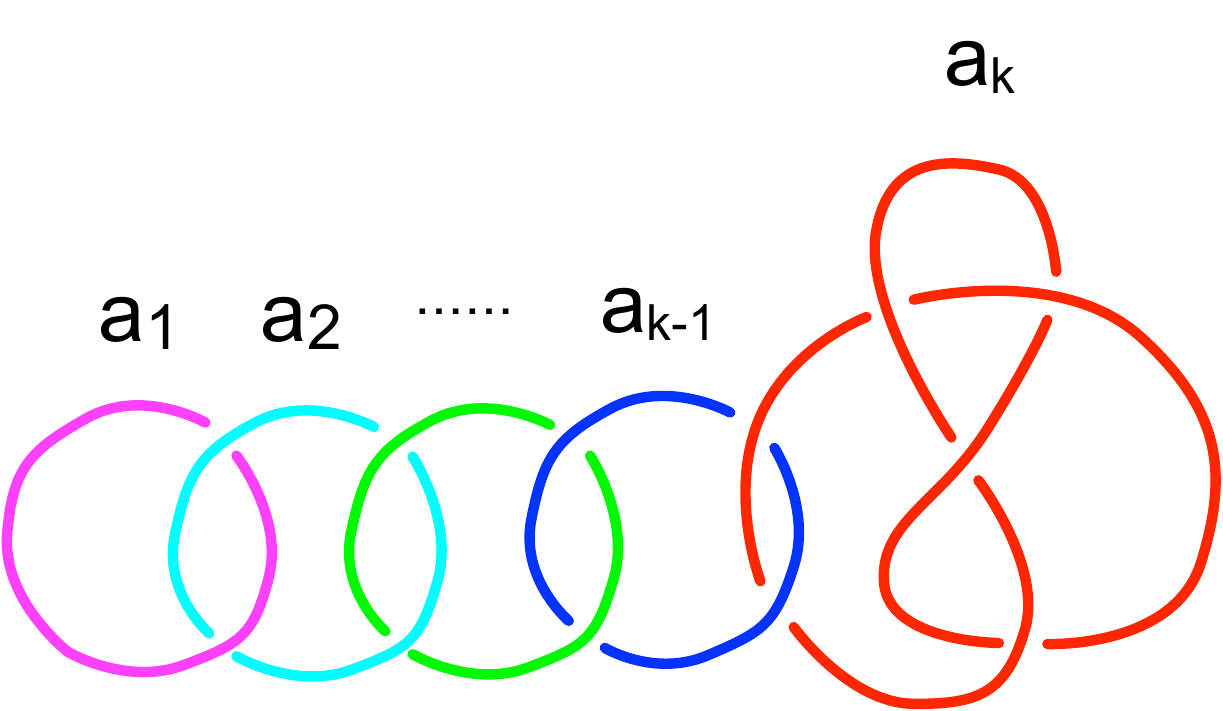}
\caption{The link L}
\label{link}
\end{figure}

\subsection{Continued fractions}\label{CF}

We recall  some notations related to the continued fraction of $\frac{p}{q},$ which will be used in the computation of the Reshetikhin-Turaev invariants.
For a pair of relatively prime integers $(p,q),$ let $$\frac{p}{q}=a_k-\frac{1}{a_{k-1}-\frac{1}{\cdots-\frac{1}{a_1}}}$$ be a  continued fraction.
For each $i\in\{1,\dots,k\},$ consider the matrix
$$ \begin{bmatrix}
A_i & B_i \\
C_i & D_i \end{bmatrix}= T^{a_i}S\cdots T^{a_1}S, $$
where $$S= \begin{bmatrix}
0 & -1 \\
1 & 0 \end{bmatrix} \quad\text{and}\quad T= \begin{bmatrix}
1 & 1 \\
0 & 1 \end{bmatrix},$$  
and as a convention let
\begin{equation}
 \begin{bmatrix}
A_0  \\
C_0  \end{bmatrix}= \begin{bmatrix}
1    \\
0  \end{bmatrix}.
\end{equation}

\begin{lemma}\label{cf} 

\begin{enumerate}[(1)]

\item For $i\in\{1,\dots,k\},$ $A_i=a_iA_{i-1}-C_{i-1}$ and $C_i=A_{i-1}.$

%\item $B_i=a_iB_{i-1}-D_{i-1}$ and $D_i=B_{i-1}.$

\item 
$$\frac{A_k}{C_k}=\frac{p}{q}.$$

%\item
%$$\frac{B_i}{A_i}=-\sum_{j=1}^i\frac{1}{A_{j-1}A_j}.$$
\end{enumerate}
\end{lemma}

\begin{proof} (1)  follows directly from induction. For (2), we show that $$\frac{A_i}{C_i}=a_i-\frac{1}{a_{i-1}-\frac{1}{\cdots-\frac{1}{a_1}}}$$ for each $i\in\{1,\dots,k\}.$
For $i=1,$ we have 
\begin{equation}
 \begin{bmatrix}
A_1  \\
C_1  \end{bmatrix}= \begin{bmatrix}
a_1    \\
1  \end{bmatrix},
\end{equation}
and $\frac{A_1}{C_1}={a_1}.$ Assume (2) holds for $i-1.$ Then by (1), we have
$$\frac{A_i}{C_i}=\frac{a_iA_{i-1}-C_{i-1}}{A_{i-1}}=a_i-\frac{C_{i-1}}{A_{i-1}}=a_i-\frac{1}{a_{i-1}-\frac{1}{\cdots-\frac{1}{a_1}}}.$$ 
\end{proof}
%For (4), we have $B_1=-1,$ and hence $\frac{B_1}{A_1}=-\frac{1}{A_1}.$ Assume (4) holds for $i=1.$ Then we have
%\begin{equation*}
%\begin{split}
%\frac{B_i}{A_i}=&\frac{a_iB_{i-1}-D_{i-1}}{A_i}\\
%=&\frac{a_iA_{i-1}B_{i-1}-A_{i-1}D_{i-1}}{A_{i-1}A_i}\\
%=&\frac{(a_iA_{i-1}-C_{i-1})B_{i-1}-1}{A_{i-1}A_i}\\
%=&\frac{A_iB_{i-1}-1}{A_{i-1}A_i}=\frac{B_{i-1}}{A_{i-1}}-\frac{1}{A_{i-1}A_i}=-\sum_{j=1}^i\frac{1}{A_{j-1}A_j},
%\end{split}
%\end{equation*}
%where the first equality comes from (2), the third equality come from the fact that $A_{i-1}D_{i-1}-B_{i-1}C_{i-1}=1,$ the fourth equality comes from (1) and the last equality comes from the induction hypothesis that (4) holds for $i=1.$

We observe that $A_k$ and $C_k$ are relatively prime because $A_kD_k-B_kC_k=\det(T^{a_k}S\cdots T^{a_1}S)=1.$ Then by Lemma \ref{cf} (2), $ \begin{bmatrix}
A_k\\
C_k \end{bmatrix}= \pm\begin{bmatrix}
p\\
q\end{bmatrix}. $
Since a $(p,q)$ Dehn-surgery and a $(-p,-q)$ Dehn-surgery provide the same $3$-manifold $M,$ we may without loss of generality assume that 
\begin{equation}
 \begin{bmatrix}
A_k \\
C_k\end{bmatrix}= \begin{bmatrix}
p\\
q\end{bmatrix}.
\end{equation}
 As a consequence, we have
\begin{equation}
\begin{bmatrix}
A_{k-1} \\
C_{k-1} \end{bmatrix}= \begin{bmatrix}
q\\
-p+a_kq\end{bmatrix}. 
\end{equation}
We also let 
\begin{equation}\label{p'}
 \begin{bmatrix}
p' \\
q'\end{bmatrix}= \begin{bmatrix}
D_k\\
-B_k\end{bmatrix}
\end{equation}
so that $pp'+qq'=1.$

For $i\in\{1,\dots, k\},$ we also consider the quantity
\begin{equation}\label{K}
K_i=\frac{(-1)^{i+1}\sum_{j=1}^ia_jC_j}{C_i};
\end{equation}
and the following Lemmas \ref{arith'} and \ref{arith} are crucial in the computation of the Reshetikhin-Turaev invariants of $M$ and the study of their asymptotics. The proofs are elementary, and the readers can skip them at the first time and come back later when needed.

\begin{lemma}\label{arith'}
 $C_{k-1}K_{k-1}+C_{k-1}q$ is an even integer.
 \end{lemma}
 
 \begin{proof}
We by Lemma \ref{cf} and the definition of $K_{k-1}$ have 
\begin{equation}\label{mod2}
\begin{split}
K_{k-1}C_{k-1} =& (-1)^k \sum_{i=1}^{k-1}a_iC_i \\
=&  (-1)^k \Big( a_1+a_2A_1+\sum_{i=3}^{k-1}a_iA_{i-1}\Big)\\
=& (-1)^k \Big( A_1+(A_2+C_1)+\sum_{i=3}^{k-1}(A_i+C_{i-1})\Big)\\
=&  (-1)^k \Big(A_1+(A_2+1)+\sum_{i=3}^{k-1}(A_i+A_{i-2})\Big)\\
=& (-1)^k \Big(1+ 2\sum_{i=1}^{k-3}A_i+A_{k-1}+A_{k-2}\Big)
\equiv 1+q+C_{k-1}\quad\quad(\text{mod } 2),
\end{split}
\end{equation}
and
\begin{equation*}
\begin{split}
K_{k-1}C_{k-1}+C_{k-1}q \equiv&  1+q+C_{k-1}+ C_{k-1}q \quad\quad(\text{mod } 2)\\
=&(1+q)(1-p+a_kq).
\end{split}
\end{equation*}
Now if $q$ is odd, then $1+q$ is even and the product is even. If $q$ is even, then $p$ must be odd since $p$ and $q$ are relatively prime. As a result, $1-p+a_kq$ is even and the product is even.
\end{proof}

\begin{lemma}\label{arith}
\begin{enumerate}[(1)] 

\item Let
$$I:\{0,\dots, |q|-1\}\to\{0,\dots,2|q|-1\}$$ be  the map defined by 
$$I(s)=-C_{k-1}(2s+1+K_{k-1})\quad(\text{mod }2|q|).$$
Then $I$ is injective with image the set of integers in $\{0,\dots,2|q|-1\}$ with parity that of $1-q.$ 

In particular, there exist a unique $s^+\in\{0,\dots, |q|-1\}$ and a unique integer $m^+$ such that 
$$I(s^+)=1-q+2m^+q,$$
 and  a unique $s^-\in\{0,\dots, |q|-1\}$  and a unique  integer $m^-$ such that
$$I(s^-)=-1-q+2m^-q.$$
Moreover, 
\begin{equation}\label{+--}
s^+-s^-\equiv p' \quad(\text{mod }q).
\end{equation}

\item Let
$$J:\{0,\dots,|q|-1\}\to\mathbb Q$$
be  the map defined by
$$J(s)=\frac{2s+1}{q}+(-1)^k\sum_{i=1}^{k-1}\frac{(-1)^{i+1}K_i}{C_{i+1}}.$$
Then for the $s^+$ and $s^-$ in (1), 
$$J(s^+)\equiv\frac{p'}{q}\quad(\text{mod }\mathbb Z)$$
and
$$J(s^-)\equiv -\frac{p'}{q}\quad(\text{mod }\mathbb Z).$$

Moverover, 
$$J(s^+)\equiv  -J(s^-)\quad(\text{mod }2\mathbb Z).$$

\item Let 
$$K:\{0,\dots,|q|-1\}\to\mathbb Q$$
be  the map defined by 
$$K(s)=\frac{C_{k-1}(2s+1+K_{k-1})^2}{q}+\sum_{i=1}^{k-2}\frac{C_iK_i^2}{C_{i+1}}.$$
Then for the $s^+$ and $s^-$ in (1), 
$$K(s^+)\equiv -\frac{p'}{q}\quad(\text{mod }\mathbb Z)$$
and
$$K(s^-)\equiv -\frac{p'}{q}\quad(\text{mod }\mathbb Z).$$
\end{enumerate}
\end{lemma}

 \begin{proof}
 For (1), suppose otherwise that there exist distinct $s$ and $s'$ in $\{0,\dots, |q|-1\}$ such that $I(s)=I(s')$ modulo $2|q|.$ Then $2C_{k-1}(s-s')=2hq$ for some integer $h,$ and $\frac{C_{k-1}}{q}=\frac{h}{s-s'}.$ Since $A_{k-1}D_{k-1}-B_{k-1}C_{k-1}=\det(T^{a_{k-1}}S\cdots T^{a_1}S)=1$ and $C_{k-1}$ and $q=A_{k-1}$ are relatively prime, $q\ |\ (s-s'),$ which is a contradiction because $|s-s'|\leqslant |q|-1.$ This proves that the map $I$ is injective. To determine the image of $I,$ for each $s\in\{0,\dots,|q|-1\},$ we by (\ref{mod2}) have
\begin{equation*}
\begin{split}
I(s)=&-C_{k-1}(2s+1)-C_{k-1}K_{k-1}\\
\equiv &-C_{k-1}(2s+1)-1-q-C_{k-1}\quad(\text{mod }2)\\
=& -2C_{k-1}(s+1)-1-q\\
 \equiv& \ 1-q \quad(\text{mod }2).
\end{split}
\end{equation*}
Since $\{0,\dots,2|q|-1\}$ contains exactly $|q|$ even integers and $|q|$ odd integers and $I$ is injective, the image of $I$ consists of all integers with parity that of $1-q.$

For (\ref{+--}), by computing $I(s^+)-I(s^-),$ we have
$$-2C_{k-1}(s^+-s^-)=2+2(m^+-m^-)q,$$
which by the fact that $-C_{k-1}=p-a_kq$ implies 
$$p(s^+-s^-)-\big(a_k(s^+-s^-)+(m^+-m^-)\big)q=1.$$
This completes the proof of (\ref{+--}).
\\

For the first two identities of (2), it suffices to show that
$$pqJ(s^\pm)=\pm 1\quad(\text{mod }q).$$
To this end, for each $i\in\{2,\dots, k\},$ we consider the quantity
$$E_i=C_i\sum_{j=1}^{i-1}\frac{(-1)^{j+1}K_j}{C_{j+1}}.$$ Then by that $p=a_kq-C_{k-1}=a_kC_k-C_{k-1},$ we have
\begin{equation*}
\begin{split}
pqJ(s^\pm)=&(a_kq-C_{k-1})(2s^\pm+1)+(-1)^k(a_kC_k-C_{k-1})q\sum_{i=1}^{k-1}\frac{(-1)^{i+1}K_i}{C_{i+1}}\\
=&-C_{k-1}(2s^\pm+1+K_{k-1})+q\bigg(a_k(2s^\pm+1)\\
&\quad\quad\quad\quad\quad\quad\quad\quad+(-1)^ka_k\Big(C_k\sum_{i=1}^{k-1}\frac{(-1)^{i+1}K_i}{C_{i+1}}\Big)-(-1)^k\Big(C_{k-1}\sum_{i=1}^{k-2}\frac{(-1)^{i+1}K_i}{C_{i+1}}\Big)\bigg)\\
=&I(s^\pm)+q\Big(a_k(2s^\pm+1)+a_k(-1)^kE_k-(-1)^kE_{k-1}\Big).
\end{split}
\end{equation*}
Since $I(s^\pm)\equiv \pm 1$ $(\text{mod }q)$ by (1), the result will follow if we can prove that both $E_{k-1}$ and $E_k$ are integers. For this, by a direct computation we have  $E_2=a_1$ and $E_3=a_2a_1+a_2$ are integers. For $i\geqslant 4,$ by Lemma \ref{CF}, we have $C_i=A_{i-1}=a_{i-1}A_{i-2}-C_{i-2}=a_{i-1}C_{i-1}-C_{i-2}.$ Then 
\begin{equation*}
\begin{split}
E_i=\,&C_i\sum_{j=1}^{i-2}\frac{(-1)^{j+1}K_j}{C_{j+1}}+(-1)^kK_{i-1}\\
=\,&a_{i-1}C_{i-1}\sum_{j=1}^{i-2}\frac{(-1)^{j+1}K_j}{C_{j+1}}-C_{i-2}\sum_{j=1}^{i-2}\frac{(-1)^{j+1}K_j}{C_{j+1}}+(-1)^kK_{i-1}\\
=\,&a_{i-1}C_{i-1}\sum_{j=1}^{i-2}\frac{(-1)^{j+1}K_j}{C_{j+1}}-C_{i-2}\sum_{j=1}^{i-3}\frac{(-1)^{j+1}K_j}{C_{j+1}}-C_{i-2}\frac{(-1)^{k-1}K_{i-2}}{C_{i-1}}+(-1)^kK_{i-1}\\
=\,&a_{i-1}E_{i-1}-E_{i-2}+a_{i-1},
\end{split}
\end{equation*}
where the last equality comes from that
\begin{equation*}
\begin{split}
-C_{i-2}\frac{(-1)^{k-1}K_{i-2}}{C_{i-1}}+(-1)^kK_{i-1}=&\frac{-(-1)^{k-1}C_{i-2}K_{i-2}+(-1)^kK_{i-1}C_{i-1}}{C_{i-1}}\\
=&\frac{-\sum_{j=1}^{i-2}a_jC_j+\sum_{j=1}^{i-1}a_jC_j}{C_{i-1}}=\frac{a_{i-1}C_{i-1}}{C_{i-1}}=a_{i-1}.
\end{split}
\end{equation*}
By induction, all $E_i$'s,  in particular $E_{k-1}$ and $E_k,$ are integers; and the first two identities follow.

For the last identity of (2), by the first two identities, it suffices to show that 
$$J(s^+)-J(s^-)\equiv \frac {2p'}{q}\quad(\text{mod }2\mathbb Z).$$
To see this, we by (\ref{+--}) have $s^+-s^-=p'+nq$ for some integer $n.$ 
Then by the definition of $J,$ 
$$J(s^+)-J(s^-)=\frac{2(s^+-s^-)}{q}=\frac{2p'}{q}+2n,$$
which completes the proof.
\\

For (3), by the definition of $K_i$ and $E_i,$ we first compute 
\begin{equation*}
\begin{split}
\sum_{i=1}^{k-1}\frac{C_iK_i^2}{C_{i+1}}=&\sum_{i=1}^{k-1}\sum_{j=1}^i(-1)^{i+1}\frac{a_jC_jK_i}{C_{i+1}}\\
=&\sum_{j=1}^{k-1}\sum_{i=j}^{k-1}(-1)^{i+1}\frac{a_jC_jK_i}{C_{i+1}}\\
=&\sum_{j=1}^{k-1}\sum_{i=1}^{k-1}(-1)^{i+1}\frac{a_jC_jK_i}{C_{i+1}}-\sum_{j=2}^{k-1}\sum_{i=1}^{j-1}(-1)^{i+1}\frac{a_jC_jK_i}{C_{i+1}}\\
=&\bigg(\sum_{j=1}^{k-1}a_jC_j\bigg)\bigg(\sum_{i=1}^{k-1}\frac{(-1)^{i+1}K_i}{C_{i+1}}\bigg)-\sum_{j=2}^{k-1}a_j\bigg(C_j\sum_{i=1}^{j-1}\frac{(-1)^{i+1}K_i}{C_{i+1}}\bigg)\\
=&(-1)^kC_{k-1}K_{k-1} \sum_{i=1}^{k-1}\frac{(-1)^{i+1}K_i}{C_{i+1}} -\sum_{j=2}^{k-1}a_jE_j\equiv (-1)^kC_{k-1}K_{k-1} \sum_{i=1}^{k-1}\frac{(-1)^{i+1}K_i}{C_{i+1}}\quad(\text{mod } \mathbb Z),
\end{split}
\end{equation*}
where the last equality uses that $E_i$'s are integers from the proof of (3). Then by  that $C_k=q,$
\begin{equation*}
\begin{split}
K(s^\pm)=&\frac{C_{k-1}\big((2s^\pm+1)^2+2(2s^\pm+1)K_{k-1}\big)}{q}+\sum_{i=1}^{k-1}\frac{C_iK_i^2}{C_{i+1}}\\
\equiv &\frac{C_{k-1}\big((2s^\pm+1)^2+2(2s^\pm+1)K_{k-1}\big)}{q}+(-1)^kC_{k-1}K_{k-1}\sum_{i=1}^{k-1}\frac{(-1)^{i+1}K_i}{C_{i+1}}\quad(\text{mod } \mathbb Z)\\
=&\frac{2s^\pm+1}{q}C_{k-1}(2s^\pm+1+K_{k-1})+C_{k-1}K_{k-1}\bigg(\frac{2s^\pm+1}{q}+(-1)^k\sum_{i=1}^{k-1}\frac{(-1)^{i+1}K_i}{C_{i+1}}\bigg)\\
=&-\frac{2s^\pm+1}{q}I(s^\pm)+C_{k-1}K_{k-1}J(s^\pm)\equiv  \mp\frac{2s^\pm+1}{q}\pm\frac{C_{k-1}K_{k-1}p'}{q}\quad(\text{mod } \mathbb Z),
\end{split}
\end{equation*}
where the last equality comes from (1) and (2). To prove the result, it suffices to show that
$$pq\bigg( \mp\frac{2s^\pm+1}{q}\pm\frac{C_{k-1}K_{k-1}p'}{q}\bigg)\equiv -1\quad(\text{mod }q).$$
To this end, since $pp'+qq'=1$ and $p=a_kq-C_{k-1},$ we have 
\begin{equation*}
\begin{split}
pq\bigg( -\frac{2s^++1}{q}+\frac{C_{k-1}K_{k-1}p'}{q}\bigg)=&-p(2s^++1)+C_{k-1}K_{k-1}pp'\\
\equiv & -p(2s^++1)+C_{k-1}K_{k-1}\quad(\text{mod }q) \\
=&-(a_kq-C_{k-1})(2s^++1)+C_{k-1}K_{k-1} \\
=&-a_kq(2s^++1) -I(s^+) \equiv -1\quad(\text{mod }q),
\end{split}
\end{equation*}
where the last equality comes from (2); and
\begin{equation*}
\begin{split}
pq\bigg( \frac{2s^-+1}{q}-\frac{C_{k-1}K_{k-1}p'}{q}\bigg)&=p(2s^-+1)-C_{k-1}K_{k-1}pp'\\
&\equiv p(2s^-+1)-C_{k-1}K_{k-1}\quad(\text{mod }q) \\
&=(a_kq-C_{k-1})(2s^-+1)-C_{k-1}K_{k-1} \\
&=a_kq(2s^-+1) +I(s^-) \equiv -1\quad(\text{mod }q),
\end{split}
\end{equation*}
where the last equality comes from (2). This completes the proof. \end{proof}

%%%%%%%%%%%%%%%%%%%%%%%%%%%%%%%%%%%%%%%%%%%%%%%%%%%%%%%%%%%%%%%%%%%%%%%%%%%%%%%%%%%%%%%%%%%%%%%%%%%%%%%%%%%%%%%%%%%%%%%%%%%%%%%%%%%%%%%%%%%%%%%%%%%%%%%%%%%%%%%%%%%%%%%%%%%%%%%%%%%%%%%%

\subsection{The computation}

\begin{proposition}\label{formula}  For an odd integer $r\geqslant 3$ and at the root of unity $t=e^{\frac{4\pi\sqrt{-1}}{r}},$ the $r$-th Reshetikin-Turaev invariant of the closed oriented $3$-manifold $M$ obtained by doing the $\frac{p}{q}$ Dehn-surgery along the figure-$8$ knot can be computed as
$$
\mathrm{RT}_r(M)=\kappa_r\sum_{s=0}^{|q|-1}\sum_{m=-\frac{r-2}{2}}^{\frac{r-2}{2}}\sum_{n=\max\{-m,m\}}^{\frac{r-2}{2}}g_r(s,m,n)$$
where
$$\kappa_r=\frac{(-1)^{\frac{3(k+1)}{4}+\sum_{i=1}^ka_i}e^{\frac{\pi\sqrt{-1}}{r}\big(3\sigma(L)-\sum_{i=1}^ka_i-\sum_{i=2}^k\frac{1}{C_{i-1}C_i}\big)+\frac{\pi\sqrt{-1}r}{4}\big(\sigma(L)+3a_k\big)}}{2r\sqrt{q}}$$
with $\sigma(L)$ the signature of the linking matrix of the link $L$ in Figure \ref{link}, $C_i$ and $K_i$ as defined in Section \ref{CF}, 
the first summation over integers $s$ in between $0$ and $|q|-1,$
the second summation over half-integers $m$ in  between $-\frac{r-2}{2}$ and $\frac{r-2}{2}$ and the third summation over half-integers $n$ in between $\max\{-m,m\}$ and $\frac{r-2}{2}.$
For $s\in\{1,\dots, |q|-1\},$ let $I(s),$ $J(s)$ and $K(s)$ be as defined in Lemma \ref{arith}. Then
$$g_r(s,m,n)=\sin\bigg(\frac{x}{q}-J(s)\pi\bigg)\epsilon\bigg(\frac{2\pi m}{r},\frac{2\pi n}{r}\bigg)e^{-\frac{2\pi m\sqrt{-1}}{r}+\frac{r}{4\pi\sqrt{-1}}V_r\big(s,\frac{2\pi m}{r},\frac{2\pi n}{r}\big)}$$
where the functions $\epsilon(x,y)$ and $V_r(s,x,y)$ are defined as follows: Let $\varphi_r$ be the quantum dilogarithm function as defined by (\ref{qd}).
\begin{enumerate}[(1)]
\item If both $0<y\pm x< \pi,$ then $\epsilon(x,y)=2$ and
$$V_r(s,x,y)=-\frac{px^2}{q}+I(s)\frac{2\pi x}{q}+4xy-\varphi_r\Big(\pi-y-x-\frac{\pi}{r}\Big)+\varphi_r\Big(y-x+\frac{\pi}{r}\Big)+K(s)\pi^2.$$
\item If $0< y+x<\pi$ and $\pi < y-x < 2\pi,$ then $\epsilon(x,y)=1$ and
$$V_r(s,x,y)=-\frac{px^2}{q}+I(s)\frac{2\pi x}{q}+4xy-\varphi_r\Big(\pi-y-x-\frac{\pi}{r}\Big)+\varphi_r\Big(y-x+\pi-\frac{\pi}{r}\Big)+K(s)\pi^2.$$
\item If $\pi < y+x< 2\pi$ and $0 < y-x < \pi,$ then $\epsilon(x,y)=1$ and
$$V_r(s,x,y)=-\frac{px^2}{q}+I(s)\frac{2\pi x}{q}+4xy-\varphi_r\Big(2\pi-y-x-\frac{\pi}{r}\Big)+\varphi_r\Big(y-x+\frac{\pi}{r}\Big)+K(s)\pi^2.$$
\end{enumerate}
\end{proposition}

\begin{remark}Here by half-integers, we mean rational numbers of the form $n+\frac{1}{2},$ $n\in\mathbb Z.$
\end{remark}

\begin{proof}[Proof of Proposition \ref{formula}] A  direct computation shows that
$$\langle\mu_r \omega_r\rangle _{U_+}=e^{\big(-\frac{3}{r}-\frac{r+1}{4}\big)\pi\sqrt{-1}}.$$
Let 
$$\kappa_r'=\mu_r^{k+1}\langle\mu_r \omega_r\rangle _{U_+}^{-\sigma(L)}=\bigg(\frac{\sin\frac{2\pi}{r}}{\sqrt r}\bigg)^{k+1}e^{-\sigma(L)\big(-\frac{3}{r}-\frac{r+1}{4}\big)\pi\sqrt{-1}}.$$
Then by (\ref{RT}), we have
\begin{equation*}
\begin{split}
\mathrm{RT}_r(M)&=\kappa_r' \langle \omega_r,\dots,\omega_r\rangle_{D(L)}  \\
&=\kappa_r'\sum_{m_1,\dots, m_k=0}^{r-2}(-1)^{m_k+\sum_{i=1}^ka_im_i}t^{\sum_{i=1}^n\frac{a_im_i(m_i+2)}{4}}[m_1+1]\prod_{i=1}^{k-1}[(m_i+1)(m_{i+1}+1)]\langle e_{m_k} \rangle_{D(K_{4_1})},
\end{split}
\end{equation*}
where the second equality comes from the fact that $e_m$ is an eigenvector of the positive and the negative twist operator of eigenvalue $(-1)^mt^{\pm\frac{m(m+2)}{4}}$, and is also an eigenvector of the circle operator $c(e_n)$ (defined by enclosing $e_m$ by $e_n$) of eigenvalue $(-1)^n\frac{[(m+1)(n+1)]}{[m+1]}.$
By Habiro's formula\,\cite{Ha} (see also \cite{Ma} for a skein theoretical computation) 
\begin{equation*}
\begin{split}
\langle e_m \rangle_{D(K_{4_1})}&=(-1)^m[m+1]J'_{m+1}(K_{4_1})\\
&=\frac{(-1)^m}{\{1\}}\sum_{n=0}^{\min\{m,r-2-m\}}\frac{\{m+1+n\}!}{\{m-n\}!}\\
&=\frac{(-1)^{m+1}}{\{1\}}\sum_{n=0}^{\min\{m,r-2-m\}}t^{-(m+1)(n+\frac{1}{2})}\frac{(t)_{m+1+n}}{(t)_{m-n}},
\end{split}
\end{equation*}
where $J'_{m}(K)$ is the $m$-th normalized colored Jones polynomial so that $J'_m(\text{unknot})=1.$ Here $\{m\}=t^{\frac{m}{2}}-t^{-\frac{m}{2}},$ $\{m\}!=\prod_{k=1}^m\{k\}$ and $(t)_m=\prod_{k=1}^m(1-t^k).$

Then
\begin{equation*}\label{sum}
\begin{split}
\mathrm{RT}_r(M)=&-\frac{\kappa_r'}{\{1\}}\sum_{m_1,\dots, m_k=0}^{r-2}\sum_{n=0}^{\min\{m_k,r-2-m_k\}}(-1)^{\sum_{i=1}^ka_im_i}t^{\sum_{i=1}^k\frac{a_im_i(m_i+2)}{4}-(m_k+1)(n+\frac{1}{2})}\\
&\quad\quad\quad\quad\quad\quad\quad\quad\quad\quad\quad\quad [m_1+1]\prod_{i=1}^{k-1}[(m_i+1)(m_{i+1}+1)]\frac{(t)_{m_k+1+n}}{(t)_{m_k-n}}\\
=&-\frac{\kappa_r'}{\{1\}}\sum_{m_1,\dots, m_{k-1}=0}^{r-1}\sum_{m_k=1}^{r-1}\sum_{n=0}^{\min\{m_k-1,r-1-m_k\}}(-1)^{\sum_{i=1}^ka_i(m_i-1)}t^{\sum_{i=1}^k\frac{a_i(m_i^2-1)}{4}-m_k(n+\frac{1}{2})}\\
&\quad\quad\quad\quad\quad\quad\quad\quad\quad\quad\quad\quad\quad\quad\quad\quad\quad\quad\quad [m_1]\prod_{i=1}^{k-1}[m_im_{i+1}]\frac{(t)_{m_k+n}}{(t)_{m_k-n-1}}\\
\end{split}
\end{equation*}
where the last equality is obtained by changing the variables $m_i$ to $m_i-1$ for $i\in\{1,\dots,k\}$ and the fact that $[0]=0.$ By reordering the summations, we have
\begin{equation*}
\begin{split}
&\mathrm{RT}_r(M)\\
=&-\frac{\kappa_r'}{\{1\}}\sum_{m_k=1}^{r-1}\sum_{n=0}^{\min\{m_k-1,r-1-m_k\}}\bigg(\sum_{m_1,\dots, m_{k-1}=0}^{r-1}(-1)^{\sum_{i=1}^{k-1}a_i(m_i-1)}t^{\sum_{i=1}^{k-1}\frac{a_i(m_i^2-1)}{4}}[m_1]\prod_{i=1}^{k-1}[m_im_{i+1}]\bigg)\\
&\quad\quad\quad\quad\quad\quad\quad\quad\quad\quad\quad\quad\quad\quad\quad\quad(-1)^{a_k(m_k-1)}t^{\frac{a_k(m_k^2-1)}{4}-m_k(n+\frac{1}{2})}\frac{(t)_{m_k+n}}{(t)_{m_k-n-1}} \\
=&\,\kappa_r''\sum_{m_k=1}^{r-1}\sum_{n=0}^{\min\{m_k-1,r-1-m_k\}}S(m_k)(-1)^{a_km_k}t^{\frac{a_km_k^2}{4}-m_k(n+\frac{1}{2})}\frac{(t)_{m_k+n}}{(t)_{m_k-n-1}},
\end{split}
\end{equation*}
where
$$\kappa_r''=-\frac{(-1)^{\sum_{i=1}^ka_i}t^{-\sum_{i=1}^k\frac{a_i}{4}}}{\{1\}^{k+1}}\kappa_r'$$
is a constant independent of $m_1,\dots,m_k,$ and
$$S(m_k)=\sum_{m_1,\dots, m_{k-1}=0}^{r-1}(-1)^{\sum_{i=1}^{k-1}a_im_i}t^{\sum_{i=1}^{k-1}\frac{a_im_i^2}{4}}\big(t^{\frac{m_1}{2}}-t^{-\frac{m_1}{2}}\big)\prod_{i=1}^{k-1}\big(t^{\frac{m_im_{i+1}}{2}}-t^{-\frac{m_im_{i+1}}{2}}\big)$$
is a quantity depending only on $m_k.$ By Lemma \ref{sm} in Section \ref{Lem}, the multi-sum $S(m_k)$ can be computed as the following single sum
\begin{equation*}
\begin{split}
S(m_k)=\,&\tau^+\sum_{s=0}^{2|q|-1}e^{-\frac{\pi\sqrt{-1}}{r}\frac{C_{k-1}}{q}\big(m_k+sr+\frac{K_{k-1}r}{2}+\frac{(-1)^k}{C_{k-1}}\big)^2}-\tau^-\sum_{s=0}^{2|q|-1}e^{-\frac{\pi\sqrt{-1}}{r}\frac{C_{k-1}}{q}\big(m_k+sr+\frac{K_{k-1}r}{2}-\frac{(-1)^k}{C_{k-1}}\big)^2},
\end{split}
\end{equation*}
where 
$$\tau^{\pm}=\frac{(-1)^{\frac{k-1}{4}}}{\sqrt q}2^{k-2}r^{\frac{k-1}{2}}e^{-\frac{\pi\sqrt{-1}}{r}\sum_{i=1}^{k-2}\frac{1}{C_iC_{i+1}}-\frac{\pi\sqrt{-1}r}{4}\sum_{i=1}^{k-2}\frac{C_iK_i^2}{C_{i+1}}\mp\pi\sqrt{-1}\sum_{i=1}^{k-2}\frac{(-1)^{i+1}K_i}{C_{i+1}}}$$
are  constants independent of $m_k.$

Then we observe that for each $s\in\{0,\dots, |q|-1\},$ 
$$e^{-\frac{\pi\sqrt{-1}}{r}\frac{C_{k-1}}{q}\big(m_k+(s+q)r+\frac{K_{k-1}r}{2}\pm\frac{(-1)^k}{C_{k-1}}\big)^2}=e^{-\frac{\pi\sqrt{-1}}{r}\frac{C_{k-1}}{q}\big(m_k+sr+\frac{K_{k-1}r}{2}\pm\frac{(-1)^k}{C_{k-1}}\big)^2}.$$
Indeed, since all of $C_{k-1},$ $m_k,$ $s,$ $q$ and $r$ are integers, a direct computation shows that
$$\frac{e^{-\frac{\pi\sqrt{-1}}{r}\frac{C_{k-1}}{q}\big(m_k+(s+q)r+\frac{K_{k-1}r}{2}\pm\frac{(-1)^k}{C_{k-1}}\big)^2}}{e^{-\frac{\pi\sqrt{-1}}{r}\frac{C_{k-1}}{q}\big(m_k+sr+\frac{K_{k-1}r}{2}\pm\frac{(-1)^k}{C_{k-1}}\big)^2}}=e^{-\pi\sqrt{-1}r(K_{k-1}C_{k-1}+C_{k-1}q)}=1,$$
where the last equality comes from Lemma \ref{arith'}  that  $K_{k-1}C_{k-1}+C_{k-1}q$ is an even integer. 

As a consequence we have
\begin{equation*}
\begin{split}
S(m_k)=\,&2\tau^+\sum_{s=0}^{|q|-1}e^{-\frac{\pi\sqrt{-1}}{r}\frac{C_{k-1}}{q}\big(m_k+sr+\frac{K_{k-1}r}{2}+\frac{(-1)^k}{C_{k-1}}\big)^2}-2\tau^-\sum_{s=0}^{|q|-1}e^{-\frac{\pi\sqrt{-1}}{r}\frac{C_{k-1}}{q}\big(m_k+sr+\frac{K_{k-1}r}{2}-\frac{(-1)^k}{C_{k-1}}\big)^2}\\
=\,&2\tau\Bigg(\sum_{s=0}^{|q|-1}e^{-\frac{\pi\sqrt{-1}}{r}\frac{C_{k-1}}{q}\Big(\big(m_k+sr+\frac{K_{k-1}r}{2}\big)^2+\frac{1}{C^2_{k-1}}\Big)-\pi\sqrt{-1}\Big(\frac{(-1)^k}{rq}\big(2m_k+2sr+K_{k-1}r\big)+\sum_{i=1}^{k-2}\frac{(-1)^{i+1}K_i}{C_{i+1}}\Big)}\\
&\quad-\sum_{s=0}^{|q|-1}e^{-\frac{\pi\sqrt{-1}}{r}\frac{C_{k-1}}{q}\Big(\big(m_k+sr+\frac{K_{k-1}r}{2}\big)^2+\frac{1}{C^2_{k-1}}\Big)+\pi\sqrt{-1}\Big(\frac{(-1)^k}{rq}\big(2m_k+2sr+K_{k-1}r\big)+\sum_{i=1}^{k-2}\frac{(-1)^{i+1}K_i}{C_{i+1}}\Big)}\Bigg)\\
=\,&\tau' \sum_{s=0}^{|q|-1}e^{-\frac{\pi\sqrt{-1}}{r}\frac{C_{k-1}}{q}\big(m_k+sr+\frac{K_{k-1}r}{2}\big)^2}\sin\bigg(-\pi\Big(\frac{(-1)^k}{rq}\big(2m_k+2sr\big)+\sum_{i=1}^{k-1}\frac{(-1)^{i+1}K_i}{C_{i+1}}\Big)\bigg),
\end{split}
\end{equation*}
where  in the computation of the quantity in the sine we use that $C_k=q,$
$$\tau=\frac{(-1)^{\frac{k-1}{4}}}{\sqrt q}2^{k-2}r^{\frac{k-1}{2}}e^{-\frac{\pi\sqrt{-1}}{r}\sum_{i=1}^{k-2}\frac{1}{C_iC_{i+1}}-\frac{\pi\sqrt{-1}r}{4}\sum_{i=1}^{k-2}\frac{C_iK_i^2}{C_{i+1}}}$$
and 
\begin{equation*}
\begin{split}
\tau'=&4\sqrt{-1}\tau e^{-\frac{\pi\sqrt{-1}}{r}\frac{C_{k-1}}{q}\frac{1}{C^2_{k-1}}}=\frac{(-1)^{\frac{k+1}{4}}}{\sqrt q}2^kr^{\frac{k-1}{2}}e^{-\frac{\pi\sqrt{-1}}{r}\sum_{i=1}^{k-1}\frac{1}{C_iC_{i+1}}-\frac{\pi\sqrt{-1}r}{4}\sum_{i=1}^{k-2}\frac{C_iK_i^2}{C_{i+1}}}.
\end{split}
\end{equation*}
 As a consequence, 
\begin{equation*}
\begin{split}
\mathrm{RT}_r(M)=\, & \kappa_r''\tau' \sum_{s=0}^{|q|-1} \sum_{m_k=1}^{r-1}\sum_{n=0}^{\min\{m_k-1,r-1-m_k\}}e^{-\frac{\pi\sqrt{-1}}{r}\frac{C_{k-1}}{q}\big(m_k+sr+\frac{K_{k-1}r}{2}\big)^2}\\
&\sin\bigg(-\pi\Big(\frac{(-1)^k}{rq}\big(2m_k+2sr\big)+\sum_{i=1}^{k-1}\frac{(-1)^{i+1}K_i}{C_{i+1}}\Big)\bigg)(-1)^{a_km_k}t^{\frac{a_km_k^2}{4}-m_k(n+\frac{1}{2})}\frac{(t)_{m_k+n}}{(t)_{m_k-n-1}}.
\end{split}
\end{equation*}

Letting $m'=\frac{r}{2}-m_k,$ $n'=\frac{r-2}{2}-n$ and
$$\kappa_r=(-1)^{\frac{3a_kr}{4}+k}e^{\frac{\pi\sqrt{-1}r}{4}\sum_{i=1}^{k-2}\frac{C_iK_i^2}{C_{i+1}}}\kappa_r''\tau',$$  we have
\begin{equation}\label{estim}
\begin{split}
\mathrm{RT}_r(M)=\,&\kappa_r\sum_{s=0}^{|q|-1}\sum_{m'=-\frac{r-2}{2}}^{\frac{r-2}{2}}\sum_{n'=\max\{-m',m'\}}^{\frac{r-2}{2}}e^{-\frac{\pi\sqrt{-1}}{r}\frac{C_{k-1}}{q}\big(-m'+\frac{(2s+1+K_{k-1})r}{2}\big)^2-\frac{\pi\sqrt{-1}r}{4}\sum_{i=1}^{k-2}\frac{C_iK_i^2}{C_{i+1}}}\\
&\quad\quad\quad\quad\quad\quad\quad\quad \sin\bigg(\frac{2\pi m'}{r}\frac{1}{q}-J(s)\pi\bigg)t^{\frac{a_km'^2}{4}-m'(n'+\frac{1}{2})}\frac{(t)_{r-m'-n'-1}}{(t)_{n'-m'}}.
\end{split}
\end{equation}

By a direct computation and that $C_k=q,$ we have
$$\kappa_r=\frac{(-1)^{\frac{3(k+1)}{4}+\sum_{i=1}^ka_i}e^{\frac{\pi\sqrt{-1}}{r}\big(3\sigma(L)-\sum_{i=1}^ka_i-\sum_{i=2}^k\frac{1}{C_{i-1}C_i}\big)+\frac{\pi\sqrt{-1}r}{4}\big(\sigma(L)+3a_k\big)}}{2r\sqrt{q}},$$
\begin{equation*}
\begin{split}
&e^{-\frac{\pi\sqrt{-1}}{r}\frac{C_{k-1}}{q}\big(-m'+\frac{(2s+1+K_{k-1})r}{2}\big)^2-\frac{\pi\sqrt{-1}r}{4}\sum_{i=1}^{k-2}\frac{C_iK_i^2}{C_{i+1}}}=e^{\frac{r}{4\pi\sqrt{-1}}\Big(\frac{C_{k-1}}{q}\big(\frac{2\pi m'}{r}\big)^2+\frac{2\pi I(s)}{q}\big(\frac{2\pi m'}{r}\big)+K(s)\pi^2\Big)},
\end{split}
\end{equation*}
and
\begin{equation*}
\begin{split}
t^{\frac{a_km'}{4}-m'(n'+\frac{1}{2})}=e^{-\frac{2\pi m'\sqrt{-1}}{r}+\frac{r}{4\pi \sqrt{-1}}\Big(-a_k\big(\frac{2\pi m'}{r}\big)^2+4\big(\frac{2\pi m'}{r}\big)
\big(\frac{2\pi n'}{r}\big)\Big)};
\end{split}
\end{equation*}
and by Lemma \ref{factorial}, we have:
\begin{enumerate} [(1)]
\item If $0\leqslant n'\pm m'\leqslant \frac{r-1}{2},$ then
$$\frac{(t)_{r-m'-n'-1}}{(t)_{n-m'}}=2e^{\frac{4\pi\sqrt{-1}}{r}\Big(-\varphi_r\big(\pi-\frac{2\pi n'}{r}-\frac{2\pi m'}{r}-\frac{\pi}{r}\big)+\varphi_r\big(\frac{2\pi n'}{r}-\frac{2\pi m'}{r}+\frac{\pi}{r}\big)\Big)}.$$

\item If $0\leqslant n'+ m'\leqslant \frac{r-1}{2}$ and $\frac{r-1}{2}\leqslant n'-m'\leqslant r-1,$ then
$$\frac{(t)_{r-m'-n'-1}}{(t)_{n-m'}}=e^{\frac{4\pi\sqrt{-1}}{r}\Big(-\varphi_r\big(\pi-\frac{2\pi n'}{r}-\frac{2\pi m'}{r}-\frac{\pi}{r}\big)+\varphi_r\big(\frac{2\pi n'}{r}-\frac{2\pi m'}{r}-\pi+\frac{\pi}{r}\big)\Big)}.$$

\item If $\frac{r-1}{2}\leqslant n'+m'\leqslant r-1$ and $0\leqslant n'-m'\leqslant \frac{r-1}{2},$ then
$$\frac{(t)_{r-m'-n'-1}}{(t)_{n-m'}}=e^{\frac{4\pi\sqrt{-1}}{r}\Big(-\varphi_r\big(2\pi-\frac{2\pi n'}{r}-\frac{2\pi m'}{r}-\frac{\pi}{r}\big)+\varphi_r\big(\frac{2\pi n'}{r}-\frac{2\pi m'}{r}+\frac{\pi}{r}\big)\Big)}.$$
\end{enumerate}
Putting all these together and using the fact that $C_{k-1}=-p+a_kq,$ we complete the proof.
\end{proof}

%%%%%%%%%%%%%%%%%%%%%%%%%%%%%%%%%%%%%%%%%%%%%%%%%%%%%%%%%%%%%%%%%%%%%%%%%%%%%%%%%%%%%%%%%%%%%%%%%%%%%%%%%%%%%%%%%%%%%%%%%%%%%%%%%%%%%%%%%%%%%%%%%%%%%%%%%%%%%%%%%%%%%%%%%%%%%%%%%%%%%%%%

\subsection{Lemma \ref{sm}} \label{Lem}

\begin{lemma}\label{sm} For each $i\in\{1,\dots,k\}$ and two non-zero integers $m_0$ and $m_{i+1},$ let 
$$S_i(m_0,m_{i+1})=\sum_{m_1,\dots,m_i=0}^{r-1}(-1)^{\sum_{j=1}^ia_j}t^{\sum_{j=1}^i\frac{a_jm_j^2}{4}}\prod_{j=0}^i(t^{\frac{m_jm_{j+1}}{2}}-t^{-\frac{m_jm_{j+1}}{2}}).$$ Let $A_i,$ $C_i$ and $K_i$ be the quantities introduced in Section \ref{CF}. Then
\begin{equation*}
\begin{split}
S_i(m_0,m_{i+1})=\,&\tau^+_i\sum_{s=0}^{2|A_i|-1}e^{-\frac{\pi\sqrt{-1}}{r}\frac{C_i}{A_i}\big(m_{i+1}+sr+\frac{K_ir}{2}+\frac{(-1)^{i+1}m_0}{C_i}\big)^2}\\
&\quad\quad\quad\quad\quad\quad\quad\quad-\tau^-_i\sum_{s=0}^{2|A_i|-1}e^{-\frac{\pi\sqrt{-1}}{r}\frac{C_i}{A_i}\big(m_{i+1}+sr+\frac{K_ir}{2}-\frac{(-1)^{i+1}m_0}{C_i}\big)^2},
\end{split}
\end{equation*}
where 
$$\tau_i^{\pm}=\frac{(-1)^{\frac{i}{4}}}{\sqrt A_i}2^{i-1}r^{\frac{i}{2}}e^{-\frac{\pi\sqrt{-1}}{r}m_0^2\sum_{j=1}^{i-1}\frac{1}{C_jC_{j+1}}-\frac{\pi\sqrt{-1}r}{4}\sum_{j=1}^{i-1}\frac{C_jK_j^2}{C_{j+1}}\mp\pi\sqrt{-1}m_0\sum_{j=1}^{i-1}\frac{(-1)^{j+1}K_j}{C_{j+1}}}.$$
In particular, the quantity $S(m_k)$ in the proof of Proposition \ref{formula} equals $S_{k-1}(1,m_k),$ and by $A_{k-1}=q,$ we have
\begin{equation*}
\begin{split}
S(m_k)=\,&\tau^+_{k-1}\sum_{s=0}^{2|q|-1}e^{-\frac{\pi\sqrt{-1}}{r}\frac{C_{k-1}}{q}\big(m_k+sr+\frac{K_{k-1}r}{2}+\frac{(-1)^k}{C_{k-1}}\big)^2}\\
&\quad\quad\quad\quad\quad\quad\quad\quad-\tau^-_{k-1}\sum_{s=0}^{2|q|-1}e^{-\frac{\pi\sqrt{-1}}{r}\frac{C_{k-1}}{q}\big(m_k+sr+\frac{K_{k-1}r}{2}-\frac{(-1)^k}{C_{k-1}}\big)^2}.
\end{split}
\end{equation*}
\end{lemma}

The proof of Lemma \ref{sm} relies on the following reciprocity of generalized Gaussian sums.

\begin{proposition}\cite[Proposition 2.3]{J}\label{R}
For $m,n\in\mathbb Z,$ if $mn$ is even and $n\psi\in\mathbb Z,$ then
$$\sum_{\lambda=0}^{|n|-1}e^{\frac{m\lambda^2\pi \sqrt{-1} }{n}}e^{2\psi\lambda \pi \sqrt{-1} }=\bigg(\frac{\sqrt{-1}n}{m}\bigg)^{\frac{1}{2}}\sum_{s=0}^{|m|-1}e^{-\frac{n(s+\psi)^2\pi \sqrt{-1}}{m}}.$$
\end{proposition}

\begin{proof}[Proof of Lemma \ref{sm}] We first observe that for any integer $a,$ the quantities
$(-1)^{am}t^{\frac{am^2}{4}}$ and $t^{\frac{am}{2}}$
are periodic in $m$ with period $r.$ As a consequence, we have
\begin{equation}\label{+r}
S_i(m_0,m_{i+1}+r)=S_{i}(m_0,m_{i+1})\quad\text{and}\quad S_i(m_0,-m_{i+1})=-S_{i}(m_0,m_{i+1}).
\end{equation}

Now we use induction. For $i=1$ and non-zero integers $m_0$ and $m_2,$ we have
\begin{equation*}
\begin{split}
S_1(m_0,m_2)=&\sum_{m_1=0}^{r-1}(-1)^{a_1m_1}t^{\frac{a_1m_1^2}{4}}(t^{\frac{m_0m_1}{2}}-t^{-\frac{m_0m_1}{2}})(t^{\frac{m_1m_2}{2}}-t^{-\frac{m_1m_2}{2}})\\
=&\sum_{m_1=0}^{r-1}(-1)^{a_1m_1}t^{\frac{a_1m_1^2}{4}}(t^{\frac{m_0m_1}{2}}-t^{-\frac{m_0m_1}{2}})t^{\frac{m_1m_2}{2}}\\
&\quad\quad\quad\quad-\sum_{m_1=0}^{r-1}(-1)^{a_1m_1}t^{\frac{a_1m_1^2}{4}}(t^{\frac{m_0m_1}{2}}-t^{-\frac{m_0m_1}{2}})t^{-\frac{m_1m_2}{2}}.
\end{split}
\end{equation*}
For the second sum, we have
\begin{equation*}
\begin{split}
-\sum_{m_1=0}^{r-1}(-1)^{a_1m_1}t^{\frac{a_1m_1^2}{4}}&(t^{\frac{m_0m_1}{2}}-t^{-\frac{m_0m_1}{2}})t^{-\frac{m_1m_2}{2}}\\
=&\sum_{m_1=0}^{r-1}(-1)^{a_1(-m_1)}t^{\frac{a_1(-m_1)^2}{4}}(t^{\frac{m_0(-m_1)}{2}}-t^{-\frac{m_0(-m_1)}{2}})t^{\frac{(-m_1)m_2}{2}}\\
=&\sum_{m_1=-r+1}^0(-1)^{a_1m_1}t^{\frac{a_1m_1^2}{4}}(t^{\frac{m_0m_1}{2}}-t^{-\frac{m_0m_1}{2}})t^{\frac{m_1m_2}{2}}\\
=&\sum_{m_1=r}^{2r-1}(-1)^{a_1m_1}t^{\frac{a_1m_1^2}{4}}(t^{\frac{m_0m_1}{2}}-t^{-\frac{m_0m_1}{2}})t^{\frac{m_1m_2}{2}},
\end{split}
\end{equation*}
where the last equality comes from the periodicity of the summands in $m_1$ and that both $\{-r+1,\dots, 0\}$ and $\{r,\dots, 2r-1\}$ are a full period. Therefore,
\begin{equation*}
\begin{split}
S_1(m_0,m_2)=&\sum_{m_1=0}^{2r-1}(-1)^{a_1m_1}t^{\frac{a_1m_1^2}{4}}(t^{\frac{m_0m_1}{2}}-t^{-\frac{m_0m_1}{2}})t^{\frac{m_1m_2}{2}}\\
=&\sum_{m_1=0}^{2r-1}(-1)^{a_1m_1}t^{\frac{a_1m_1^2}{4}}t^{\frac{m_0m_1}{2}}t^{\frac{m_1m_2}{2}}-\sum_{m_1=0}^{2r-1}(-1)^{a_1m_1}t^{\frac{a_1m_1^2}{4}}t^{-\frac{m_0m_1}{2}}t^{\frac{m_1m_2}{2}}\\
=&\sum_{m_1=0}^{2r-1}e^{\frac{2a_1m_1^2\pi\sqrt{-1}}{2r}}e^{2\big(\frac{m_2+m_0}{r}+\frac{a_1}{2}\big)m_1\pi\sqrt{-1}}-\sum_{m_1=0}^{2r-1}e^{\frac{2a_1m_1^2\pi\sqrt{-1}}{2r}}e^{2\big(\frac{m_2-m_0}{r}+\frac{a_1}{2}\big)m_1\pi\sqrt{-1}}.
\end{split}
\end{equation*}
Then by Proposition \ref{R} with $m=2a_1,$ $n=2r$ and $\psi=\frac{m_2\pm m_0}{r}+\frac{a_1}{2},$ we have 
\begin{equation*}
\begin{split}
S_1(m_0,m_2)=&\Big(\frac{\sqrt{-1}r}{a_1}\Big)^{\frac{1}{2}}\sum_{s=0}^{2|a_1|-1} e^{-\frac{2r\big(s+\frac{m_2+m_0}{r}+\frac{a_1}{2}\big)^2\pi\sqrt{-1}}{2a_1}}\\
&\quad\quad\quad\quad\quad\quad\quad\quad-\Big(\frac{\sqrt{-1}r}{a_1}\Big)^{\frac{1}{2}}\sum_{s=0}^{2|a_1|-1} e^{-\frac{2r\big(s+\frac{m_2-m_0}{r}+\frac{a_1}{2}\big)^2\pi\sqrt{-1}}{2a_1}}\\
=&\Big(\frac{\sqrt{-1}r}{A_1}\Big)^{\frac{1}{2}}\sum_{s=0}^{2|A_1|-1} e^{-\frac{\pi\sqrt{-1}}{r}\frac{C_1}{A_1}\big(m_2+sr+\frac{K_1r}{2}+\frac{(-1)^2m_0}{C_1}\big)^2}\\
&\quad\quad\quad\quad\quad\quad\quad\quad-\Big(\frac{\sqrt{-1}r}{A_1}\Big)^{\frac{1}{2}}\sum_{s=0}^{2|A_1|-1} e^{-\frac{\pi\sqrt{-1}}{r}\frac{C_1}{A_1}\big(m_2+sr+\frac{K_1r}{2}-\frac{(-1)^2m_0}{C_1}\big)^2}
\end{split}
\end{equation*}
where the last equality uses the fact that $A_1=K_1=a_1$ and $C_1=1.$ In this case, we have $\tau^\pm_1=1.$
\\

Now assume that 
\begin{equation}\label{i-1}
\begin{split}
S_{i-1}(m_0,m_i)=\,&\tau^+_{i-1}\sum_{s=0}^{2|A_{i-1}|-1}e^{-\frac{\pi\sqrt{-1}}{r}\frac{C_{i-1}}{A_{i-1}}\big(m_{i}+sr+\frac{K_{i-1}r}{2}+\frac{(-1)^{i}m_0}{C_{i-1}}\big)^2}\\
&\quad\quad\quad\quad\quad-\tau^-_{i-1}\sum_{s=0}^{2|A_{i-1}|-1}e^{-\frac{\pi\sqrt{-1}}{r}\frac{C_{i-1}}{A_{i-1}}\big(m_{i}+sr+\frac{K_{i-1}r}{2}-\frac{(-1)^{i}m_0}{C_{i-1}}\big)^2}.
\end{split}
\end{equation}
By (\ref{+r}), we have
\begin{equation*}
\begin{split}
S_i(m_0,m_{i+1})=&\sum_{m_i=0}^{r-1}(-1)^{a_im_i}t^{\frac{a_im_i^2}{4}}(t^{\frac{m_im_{i+1}}{2}}-t^{-\frac{m_im_{i+1}}{2}})S_{i-1}(m_0,m_i)\\
=&\sum_{m_i=0}^{r-1}(-1)^{a_im_i}t^{\frac{a_im_i^2}{4}}t^{\frac{m_im_{i+1}}{2}}S_{i-1}(m_0,m_i)\\
&\quad\quad+\sum_{m_i=0}^{r-1}(-1)^{a_i(-m_i)}t^{\frac{a_i(-m_i)^2}{4}}t^{\frac{(-m_i)m_{i+1}}{2}}S_{i-1}(m_0,-m_i)\\
=&\Big(\sum_{m_i=0}^{r-1}+\sum_{m_i=-r+1}^0\Big)(-1)^{a_im_i}t^{\frac{a_im_i^2}{4}}t^{\frac{m_im_{i+1}}{2}}S_{i-1}(m_0,m_i)\\
=&2\sum_{m_i=0}^{r-1}(-1)^{a_im_i}t^{\frac{a_im_i^2}{4}}t^{\frac{m_im_{i+1}}{2}}S_{i-1}(m_0,m_i),\\
\end{split}
\end{equation*}
where the last equality comes from that both $\{-r+1,\dots,0\}$ and $\{0,\dots, r-1\}$ are a full period for $m_i.$ Since the quantities
$(-1)^{am}t^{\frac{am^2}{4}}$ and $t^{\frac{am}{2}}$
 have period $r$ in $m,$ we have for any integer $s,$ 
\begin{equation*}
\begin{split}
(-1)^{a_im_i}t^{\frac{a_im_i^2}{4}}t^{\frac{m_im_{i+1}}{2}}=&(-1)^{a_i(m_i+sr)}t^{\frac{a_i(m_i+sr)^2}{4}}t^{\frac{(m_i+sr)m_{i+1}}{2}}\\
=&(-1)^{-(-1)^ia_i(m_i+sr)}t^{\frac{a_i(m_i+sr)^2}{4}}t^{\frac{(m_i+sr)m_{i+1}}{2}}\\
=&e^{\frac{\pi\sqrt{-1}}{r}a_i(m_i+sr)^2+2\pi\sqrt{-1}\big(\frac{m_{i+1}}{r}-\frac{(-1)^ia_i}{2}\big)(m_i+sr)}.
\end{split}
\end{equation*}
Then by this and (\ref{i-1}), we have
\begin{equation*}
\begin{split}
&S_i(m_0,m_{i+1})\\
=&2\tau^+_{i-1}\sum_{m_i=0}^{r-1}\sum_{s=0}^{2|A_{i-1}|-1}e^{\frac{\pi\sqrt{-1}}{r}a_i(m_i+sr)^2+2\pi\sqrt{-1}\big(\frac{m_{i+1}}{r}-\frac{(-1)^ia_i}{2}\big)(m_i+sr)-\frac{\pi\sqrt{-1}}{r}\frac{C_{i-1}}{A_{i-1}}\big(m_{i}+sr+\frac{K_{i-1}r}{2}+\frac{(-1)^{i}m_0}{C_{i-1}}\big)^2}\\
&-2\tau^-_{i-1}\sum_{m_i=0}^{r-1}\sum_{s=0}^{2|A_{i-1}|-1}e^{\frac{\pi\sqrt{-1}}{r}a_i(m_i+sr)^2+2\pi\sqrt{-1}\big(\frac{m_{i+1}}{r}-\frac{(-1)^ia_i}{2}\big)(m_i+sr)-\frac{\pi\sqrt{-1}}{r}\frac{C_{i-1}}{A_{i-1}}\big(m_{i}+sr+\frac{K_{i-1}r}{2}-\frac{(-1)^{i}m_0}{C_{i-1}}\big)^2}.\\
\end{split}
\end{equation*}
Here we observe that, as $s$ runs over $\{0, \dots,2|A_{i-1}|-1\}$ and $m_i$ runs over $\{0,\dots, r-1\},$ the quantity $m_i+sr$ runs over all integers in $\{0, \dots, 2|A_{i-1}|r-1\}.$ Thus, by letting $\lambda=m_i+sr,$ we can change the double sum above into the following single sum
\begin{equation*}
\begin{split}
&S_i(m_0,m_{i+1})\\
=&2\tau^+_{i-1}\sum_{\lambda=0}^{2|A_{i-1}|r-1}e^{\frac{\pi\sqrt{-1}}{r}a_i\lambda^2+2\pi\sqrt{-1}\big(\frac{m_{i+1}}{r}-\frac{(-1)^ia_i}{2}\big)\lambda-\frac{\pi\sqrt{-1}}{r}\frac{C_{i-1}}{A_{i-1}}\big(\lambda+\frac{K_{i-1}r}{2}+\frac{(-1)^{i}m_0}{C_{i-1}}\big)^2}\\
&\quad\quad\quad-2\tau^-_{i-1}\sum_{\lambda=0}^{2|A_{i-1}|r-1}e^{\frac{\pi\sqrt{-1}}{r}a_i\lambda^2+2\pi\sqrt{-1}\big(\frac{m_{i+1}}{r}-\frac{(-1)^ia_i}{2}\big)\lambda-\frac{\pi\sqrt{-1}}{r}\frac{C_{i-1}}{A_{i-1}}\big(\lambda+\frac{K_{i-1}r}{2}-\frac{(-1)^{i}m_0}{C_{i-1}}\big)^2}\\
=&2\tau^+_{i-1}\sum_{\lambda=0}^{2|A_{i-1}|r-1}e^{\frac{\pi\sqrt{-1}}{r}\big(a_i-\frac{C_{i-1}}{A_{i-1}}\big)\lambda^2+2\pi\sqrt{-1}\big(\frac{m_{i+1}}{r}-\frac{(-1)^ia_i}{2}-\frac{K_{i-1}C_{i-1}}{2A_{i-1}}-\frac{(-1)^im_0}{A_{i-1}r}\big)\lambda-\frac{\pi\sqrt{-1}}{r}\frac{C_{i-1}}{A_{i-1}}\big(\frac{K_{i-1}r}{2}+\frac{(-1)^im_0}{C_{i-1}}\big)^2}\\
&-2\tau^+_{i-1}\sum_{\lambda=0}^{2|A_{i-1}|r-1}e^{\frac{\pi\sqrt{-1}}{r}\big(a_i-\frac{C_{i-1}}{A_{i-1}}\big)\lambda^2+2\pi\sqrt{-1}\big(\frac{m_{i+1}}{r}-\frac{(-1)^ia_i}{2}-\frac{K_{i-1}C_{i-1}}{2A_{i-1}}+\frac{(-1)^im_0}{A_{i-1}r}\big)\lambda-\frac{\pi\sqrt{-1}}{r}\frac{C_{i-1}}{A_{i-1}}\big(\frac{K_{i-1}r}{2}-\frac{(-1)^im_0}{C_{i-1}}\big)^2}.
\end{split}
\end{equation*}
By Lemma \ref{cf}, we have $A_{i-1}=C_i,$ $a_i-\frac{C_{i-1}}{A_{i-1}}=\frac{A_i}{C_i},$ and by the definition of $K_i$ we have $(-1)^ia_i+\frac{K_{i-1}C_{i-1}}{C_i}=-K_i.$ As a consequence, 
\begin{equation*}
\begin{split}
&S_i(m_0,m_{i+1})\\
=&2\tau^+_{i-1} e^{-\frac{\pi\sqrt{-1}}{r}\frac{C_{i-1}}{A_{i-1}}\big(\frac{K_{i-1}r}{2}+\frac{(-1)^im_0}{C_{i-1}}\big)^2}\sum_{\lambda=0}^{2|C_i|r-1}e^{\frac{2A_i\lambda^2\pi\sqrt{-1}}{2C_ir}}e^{2\big(\frac{m_{i+1}}{r}+\frac{K_i}{2}+\frac{(-1)^{i+1}m_0}{C_ir}\big)\lambda\pi\sqrt{-1}}\\
&\quad\quad-2\tau^-_{i-1} e^{-\frac{\pi\sqrt{-1}}{r}\frac{C_{i-1}}{A_{i-1}}\big(\frac{K_{i-1}r}{2}-\frac{(-1)^im_0}{C_{i-1}}\big)^2}\sum_{\lambda=0}^{2|C_i|r-1}e^{\frac{2A_i\lambda^2\pi\sqrt{-1}}{2C_ir}}e^{2\big(\frac{m_{i+1}}{r}+\frac{K_i}{2}-\frac{(-1)^{i+1}m_0}{C_ir}\big)\lambda\pi\sqrt{-1}}.
\end{split}
\end{equation*}
Finally, letting
$$\tau^\pm_i=2e^{-\frac{\pi\sqrt{-1}}{r}\frac{C_{i-1}}{A_{i-1}}\big(\frac{K_{i-1}r}{2}\pm\frac{(-1)^im_0}{C_{i-1}}\big)^2}\Big(\frac{\sqrt{-1}C_ir}{A_i}\Big)^{\frac{1}{2}}\tau^\pm_{i-1}$$
and by Proposition \ref{R} with $m=2A_i,$ $n=2C_ir$ and $\psi=\frac{m_{i+1}}{r}+\frac{K_i}{2}\pm\frac{(-1)^im_0}{C_i},$ we have
\begin{equation*}
\begin{split}
&S_i(m_0,m_{i+1})\\
=&\tau^+_i\sum_{s=0}^{2|A_i|-1}e^{-\frac{2C_ir\big(s+\frac{m_{i+1}}{r}+\frac{K_i}{2}+\frac{(-1)^{i+1}m_0}{C_ir}\big)^2\pi\sqrt{-1}}{2A_i}}-\tau^-_i\sum_{s=0}^{2|A_i|-1}e^{-\frac{2C_ir\big(s+\frac{m_{i+1}}{r}+\frac{K_i}{2}-\frac{(-1)^{i+1}m_0}{C_ir}\big)^2\pi\sqrt{-1}}{2A_i}}\\
=&\tau^+_i\sum_{s=0}^{2|A_i|-1}e^{-\frac{\pi\sqrt{-1}}{r}\frac{C_i}{A_i}\big(m_{i+1}+sr+\frac{K_ir}{2}+\frac{(-1)^{i+1}m_0}{C_i}\big)^2}-\tau^-_i\sum_{s=0}^{2|A_i|-1}e^{-\frac{\pi\sqrt{-1}}{r}\frac{C_i}{A_i}\big(m_{i+1}+sr+\frac{K_ir}{2}-\frac{(-1)^{i+1}m_0}{C_i}\big)^2}.
\end{split}
\end{equation*}
By induction and that $A_{j-1}=C_j$ for each $j,$ 
\begin{equation*}
\begin{split}
\tau^\pm_i=&\bigg(\frac{\sqrt{-1}r}{A_1}\bigg)^{\frac{1}{2}}\prod_{j=2}^i\bigg(2e^{-\frac{\pi\sqrt{-1}}{r}\frac{C_{j-1}}{A_{j-1}}\big(\frac{K_{j-1}r}{2}\pm\frac{(-1)^jm_0}{C_{j-1}}\big)^2}\Big(\frac{\sqrt{-1}C_jr}{A_j}\Big)^{\frac{1}{2}}\bigg)\\
=&\frac{(-1)^{\frac{i}{4}}}{\sqrt A_i}2^{i-1}r^{\frac{i}{2}}e^{-\frac{\pi\sqrt{-1}}{r}m_0^2\sum_{j=1}^{i-1}\frac{1}{C_jC_{j+1}}-\frac{\pi\sqrt{-1}r}{4}\sum_{j=1}^{i-1}\frac{C_jK_j^2}{C_{j+1}}\mp\pi\sqrt{-1}m_0\sum_{j=1}^{i-1}\frac{(-1)^{j+1}K_j}{C_{j+1}}}.
\end{split}
\end{equation*}
\end{proof}

%%%%%%%%%%%%%%%%%%%%%%%%%%%%%%%%%%%%%%%%%%%%%%%%%%%%%%%%%%%%%%%%%%%%%%%%%%%%%%%%%%%%%%%%%%%%%%%%%%%%%%%%%%%%%%%%%%%%%%%%%%%%%%%%%%%%%%%%%%%%%%%%%%%%%%%%%%%%%%%%%%%%%%%%%%%%%%%%%%%%%%%%
\section{Poisson summation formula}\label{psf}

The main result of this section is Proposition \ref{Poisson} where, using the Poisson Summation Formula,  we write the $r$-th Reshetikhin-Turaev invariant of $M$ as a sum of integrals. 

\begin{proposition}\label{bound} For $\epsilon>0$ and $s\in\{1,\dots, |q|-1\},$ we can choose a sufficiently small $\delta>0$ so that if one of 
$\frac{2\pi n}{r}+\frac{2\pi m}{r}$ and $\frac{2\pi n}{r}-\frac{2\pi m}{r}$ is not in $\big(\delta,\frac{\pi}{2}-\delta\big)\cup\big(\pi+\delta,\frac{3\pi}{2}-\delta \big),$
then
$$|g_r(s,m,n)|<O\Big(e^{\frac{r}{4\pi}\big(\frac{1}{2}\mathrm{Vol}(\mathrm S^3\setminus K_{4_1})+\epsilon\big)}\Big).$$
\end{proposition}

To prove Proposition \ref{bound}, we need the following estimate, which first appeared in \cite[Proposition 8.2]{GL} for $t=e^{\frac{2\pi\sqrt{-1}}{r}},$ and in \cite[Proposition 4.1]{DK} for $t=e^{\frac{4\pi\sqrt{-1}}{r}}.$

\begin{lemma}\label{est}
 For any integer $0<n<r$ and at $t=e^{\frac{4\pi\sqrt{-1}}{r}},$
 $$ \log\left|\{n\}!\right|=-\frac{r}{2\pi}\Lambda\left(\frac{2n\pi}{r}\right)+O\big(\log r \big ).$$
\end{lemma}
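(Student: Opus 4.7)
The plan is to express $\log|\{n\}!|$ as a Riemann sum and compare it to the integral defining the Lobachevsky function. At $t=e^{\frac{4\pi i}{r}}$ one has $t^{k/2}=e^{\frac{2\pi i k}{r}}$, so $\{k\}=t^{k/2}-t^{-k/2}=2i\sin(\frac{2\pi k}{r})$, which is nonzero for $0<k<r$ since $r$ is odd. Consequently,
$$\log|\{n\}!|=\sum_{k=1}^{n}\log\left|2\sin\left(\frac{2\pi k}{r}\right)\right|.$$
On the other hand, substituting $t=\frac{2\pi s}{r}$ in the integral defining $\Lambda$ gives
$$-\frac{r}{2\pi}\Lambda\left(\frac{2\pi n}{r}\right)=\int_0^{n}\log\left|2\sin\left(\frac{2\pi s}{r}\right)\right|\,ds.$$
Hence it remains to show that the difference $E_r(n)$ between the Riemann sum and this integral is $O(\log r)$.

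The strategy for bounding $E_r(n)$ is to split $[0,n]$ into a ``bulk'' region, where the integrand $f(s)=\log|2\sin(2\pi s/r)|$ is smooth with controlled derivatives, and ``singular'' windows of bounded length around the zeros of $\sin(2\pi s/r)$, which occur at $s=0$ and, when $n>r/2$, at $s=r/2$. On the bulk the classical Euler--Maclaurin formula compares the sum to the integral with an error governed by the boundary values of $f$ and its derivatives; direct calculation shows $f(s)=O(\log r)$ at the boundaries of the bulk and the derivatives $f^{(j)}(s)$ for $j\geqslant 1$ are of size $O(1)$ there, so the total Euler--Maclaurin contribution to $E_r(n)$ from the bulk is $O(\log r)$.

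The main obstacle is the singular windows themselves. Near $s=0$, the expansion $\log|2\sin(2\pi s/r)|=\log(4\pi s/r)+O((s/r)^2)$ shows that both the finitely many sum terms and the integral over a fixed interval $[0,C]$ are individually of size $O(\log r)$, so their difference is likewise $O(\log r)$; this is the one place where the subleading behavior enters and where uniformity in $n$ must be checked. The window around $s=r/2$ is handled identically via the symmetry $\sin(\pi-\theta)=\sin\theta$, which reduces it to the $s=0$ case. Combining the bulk Euler--Maclaurin estimate with these singular-window bounds yields $E_r(n)=O(\log r)$, completing the proof.
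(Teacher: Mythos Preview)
Your approach is correct and is essentially the standard one. Note, however, that the paper does not give its own proof of this lemma: it simply cites \cite[Proposition~8.2]{GL} for the root $e^{2\pi i/r}$ and \cite[Proposition~4.1]{DK} for the root $e^{4\pi i/r}$. The argument in those references is precisely the Riemann-sum/Euler--Maclaurin comparison you outline, so your plan matches what the paper defers to.

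One small point worth making explicit in your write-up: you identify singular windows near $s=0$ and $s=r/2$, but the \emph{upper endpoint} $s=n$ can also sit arbitrarily close to $r/2$ or to $r$, in which case the boundary value $f(n)=\log|2\sin(2\pi n/r)|$ is itself of size $O(\log r)$ and the nearby derivatives can be large pointwise. A quick check shows this is harmless: with $m=1$ in Euler--Maclaurin, the boundary contributions $f(n)$, $f'(n)$ are still $O(\log r)$ and $O(1)$ respectively, and the remainder $\int|f''|$ over the bulk stays $O(1)$ uniformly in $n$ (the substitution $u=2\pi s/r$ turns it into $(2\pi/r)\int\csc^2 u\,du$, which telescopes to a bounded quantity). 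So your plan goes through as stated; just be sure the uniformity-in-$n$ claim covers this endpoint case as well.
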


\begin{proof}[Proof of Proposition \ref{bound}] By (\ref{estim}), we have
$$|g_r(s,m,n)|=\Big|\sin\bigg(\frac{2\pi m}{r}\frac{1}{q}-J(s)\bigg)\Big|\Big|\frac{\{r-m-n-1\}!}{\{n-m\}!}\Big|,$$
and by Lemma \ref{est}, we have
$$\log |g_r(s,m,n)|=-\frac{r}{2\pi}\Lambda\Big(\frac{2\pi(r-m-n-1)}{r}\Big)+\frac{r}{2\pi}\Lambda\Big(\frac{2\pi(n-m)}{r}\Big)+O\big(\log r\big).$$
Choose $\delta>0$ so that 
$$\Lambda(\delta)<\frac{\epsilon}{4}.$$
Now  if one of 
$\frac{2\pi n}{r}+\frac{2\pi m}{r}$ and $\frac{2\pi n}{r}-\frac{2\pi m}{r}$ is not in $\big(\delta,\frac{\pi}{2}-\delta\big)\cup\big(\pi+\delta,\frac{3\pi}{2}-\delta \big),$
then
$$\log |g_r(s,m,n)|<\frac{r}{2\pi}\Big(\Lambda\Big(\frac{\pi}{6}\Big)+\frac{\epsilon}{2}\Big)=\frac{r}{4\pi}\Big(\frac{1}{2}\mathrm{Vol}(\mathrm S^3\setminus K_{4_1})+\epsilon\Big).$$
The last equality is true because by properties of the Lobachevsky function $\Lambda\big(\frac{\pi}{6}\big)=\frac{3}{2}\Lambda\big(\frac{\pi}{3}\big),$ and the volume of $\mathrm S^3\setminus K_{4_1}$ equals $6\Lambda(\frac{\pi}{3}).$
\end{proof}

For $\delta\geqslant 0,$ we let 
$$D_{\delta}=\Big\{(x,y)\in \mathbb R^2\ \Big|\ \delta < y+x <\frac{\pi}{2}-\delta, \delta<y-x<\frac{\pi}{2}-\delta\Big\},$$
$$D'_{\delta}=\Big\{(x,y)\in \mathbb R^2\ \Big|\ \delta< y+x <\frac{\pi}{2}-\delta, \pi+\delta< y-x < \frac{3\pi}{2}-\delta \Big\}$$
and
$$D''_{\delta}=\Big\{(x,y)\in \mathbb R^2\ \Big|\ \pi+\delta <  y+x<\frac{3\pi}{2}-\delta, \delta<y-x <\frac{\pi}{2}-\delta\Big\},$$
and let  $\mathcal D_\delta=D_\delta\cup D'_\delta\cup D''_\delta.$ If $\delta=0,$ we omit the subscript and write
$D=D_0,$ $D'=D'_0,$ $D''=D''_0$ and $\mathcal D=D\cup D'\cup D''.$

\begin{figure}[htbp]
\centering
\includegraphics[scale=0.7]{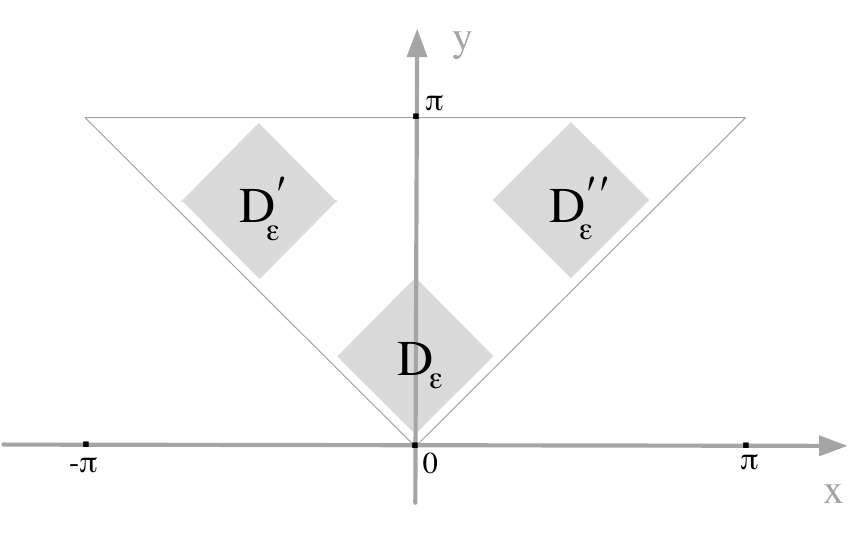}
\caption{Regions $D_\delta,$ $D_\delta'$ and $D_\delta''$}
\label{Delta} 
\end{figure}

For a sufficiently small $\delta>0,$ we consider a $C^{\infty}$-smooth bump function $\psi$ on $\mathbb R^2$ such that 
\begin{equation*}
\left \{\begin{array}{rl}
\psi(x,y)=1, & (x,y)\in\overline{\mathcal D_{\frac{\delta}{2}}}\\
0<\psi(x,y)<1, & (x,y)\in \mathcal D\setminus \overline{\mathcal D_{\frac{\delta}{2}}}\\
\psi(x,y)=0, & (x, y)\notin \mathcal D,\\
\end{array}\right.
\end{equation*}
and let 
$$f_r(s,m,n)=\psi\bigg(\frac{2\pi m}{r},\frac{2\pi n}{r}\bigg)g_r(s,m,n).$$
Then by Proposition \ref{bound}, we have
\begin{equation}\label{halfinteger}
\mathrm{RT}_r(M)=\kappa_r \sum_{s=0}^{|q|-1}\sum_{(m,n)\in\big(\mathbb Z+\frac{1}{2}\big)^2}f_r(s,m,n)+O\Big(e^{\frac{r}{4\pi}\big(\frac{1}{2}\mathrm{Vol}(\mathrm{S}^3\setminus K_{4_1})+\epsilon\big)}\Big).
\end{equation}

Since $f_r$ is $C^{\infty}$-smooth and equals zero outside of $\mathcal D,$ it is in the Schwartz space on $\mathbb R^2.$ Recall that by the Poisson Summation Formula (see e.g. \cite[Theorem 3.1]{SS}), for any function $f$ in the Schwartz space on $\mathbb R^{k},$
$$\sum_{(m_1,\dots,m_k)\in\mathbb Z^{k}}f(m_1,\dots,m_k)=\sum_{(n_1,\dots,n_{k})\in\mathbb Z^{k}}\hat f(n_1,\dots,n_{k}),$$
where  $\hat f(n_1,\dots,n_{k})$ is the $(n_1,\dots,n_{k})$-th Fourier coefficient of $f$ defined by
$$\hat f(n_1,\dots,n_{k})=\int_{\mathbb R^{k}}f(x_1,\dots, x_k)e^{\sum_{j=1}^k2\pi\sqrt{-1}n_jx_j}dx_1\dots dx_k.$$
As a consequence, we have

\begin{proposition}\label{Poisson}
$$\mathrm{RT}_r(M)=\kappa_r \sum_{s=0}^{|q|-1}\sum_{(m,n)\in\mathbb Z^2}\hat f_r(s,m,n)+O\Big(e^{\frac{r}{4\pi}\big(\frac{1}{2}\mathrm{Vol}(\mathrm{S}^3\setminus K_{4_1})+\epsilon\big)}\Big),$$
where
\begin{equation*}
\begin{split}
\hat f_r(s,m,n)=(-1)^{m+n}\Big(\frac{r}{2\pi}\Big)^{2}\int_{\mathcal D}&\psi(x,y)\sin\bigg(\frac{x}{q}-J(s)\pi\bigg)\epsilon(x,y)\\
&e^{-x\sqrt{-1}+\frac{r}{4\pi\sqrt{-1}}\Big(V_r(s,x,y)-4\pi m x-4\pi n y\Big)}dxdy.
\end{split}
\end{equation*}
\end{proposition}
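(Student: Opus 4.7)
The plan is to start from equation (\ref{halfinteger}) and convert the sum of $f_r$ over the shifted lattice $(\mathbb Z + \tfrac{1}{2})^{k+1}$ into a sum of Fourier coefficients via the Poisson summation formula. Before applying it, I would verify that the natural extension of $f_r$ to $\mathbb R^{k+1}$, obtained by allowing $\tfrac{2\pi m_i}{r}$ and $\tfrac{2\pi m}{r}$ to range over real variables $x_i$ and $y$, lies in the Schwartz space. Since the three regions $D, D', D''$ making up $\mathcal D$ are disjoint open subsets of the plane, on each component of the support of $\psi_r$ the function $g_r$ is manifestly smooth (being a product of $\sin$, exponentials, and compositions with $\varphi_r$ inside its domain of holomorphy); together with the smooth compactly supported cutoff $\psi_r$ this places $f_r$ in $C_c^\infty(\mathbb R^{k+1}) \subset \mathcal S(\mathbb R^{k+1})$.

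To handle the half-integer shift I would set $F(m_1, \dots, m_k, m) := f_r(m_1 + \tfrac{1}{2}, \dots, m_k + \tfrac{1}{2}, m + \tfrac{1}{2})$ and apply the quoted integer-lattice Poisson summation formula to $F$. Writing $\tilde f_r(n) = \int_{\mathbb R^{k+1}} f_r(m)\, e^{2\pi i \langle n, m\rangle}\, dm$ for the standard Fourier transform, a direct translation computation gives
$$\hat F(n_1, \dots, n_k, n) = (-1)^{\sum_{i=1}^k n_i + n}\, \tilde f_r(n_1, \dots, n_k, n),$$
which produces the alternating sign $(-1)^{\sum n_i + n}$ appearing in the proposition's formula for $\hat f_r$.

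Finally I would evaluate $\tilde f_r(n_1, \dots, n_k, n)$ by inserting the definition of $f_r$ and changing variables to $x_i = \tfrac{2\pi m_i}{r}$, $y = \tfrac{2\pi m}{r}$. The Jacobian yields $(\tfrac{r}{2\pi})^{k+1}$; the Fourier phase $e^{2\pi i(\sum n_i m_i + n m)}$ transforms into $e^{\frac{r}{4\pi i}(-4\pi \sum n_i x_i - 4\pi n y)}$, which combines cleanly with the $e^{\frac{r}{4\pi i} V_r}$ factor already present in $g_r$; and the factors $\sin(x_1)$, $\epsilon(x_k, y)$, $e^{-ix_k}$ (this last coming from $e^{-\frac{2\pi m_k i}{r}}$ in $g_r$), together with $\psi_r$, reproduce the integrand in the statement. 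Combining with the error term of (\ref{halfinteger}) gives the proposition. There is no deep obstacle here: the only items requiring care are the accounting for the alternating sign and the Jacobian prefactor, plus confirming that the extended $f_r$ has compact support so that the Poisson summation formula applies.
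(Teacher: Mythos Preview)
Your proposal is correct and follows essentially the same route as the paper: shift the half-integer lattice to the integer lattice, apply Poisson summation to the compactly supported smooth function $f_r$, and then change variables $(m_1,\dots,m_k,m)\mapsto(x_1,\dots,x_k,y)$ to obtain the stated integrand together with the sign $(-1)^{\sum n_i+n}$ and the Jacobian factor $(r/2\pi)^{k+1}$. The only cosmetic difference is that the paper shifts by $-\tfrac12$ rather than $+\tfrac12$, which of course gives the same sign.
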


\begin{proof} To apply the Poisson Summation Formula, we need to make the summation in (\ref{halfinteger}) over integers instead of half-integers. To do this, we let $m'=m+\frac{1}{2}$ and $n'=n+\frac{1}{2}.$ Then for each $s\in\{0,\dots,|q|-1\},$
$$\sum_{(m,n)\in(\mathbb Z+\frac{1}{2})^2}f_r(s,m,n)=\sum_{(m',n')\in\mathbb Z^2}f_r\Big(s, m'-\frac{1}{2},n'-\frac{1}{2}\Big).$$
Now by the Poisson Summation Formula, the right hand side equals
$$\sum_{(a,b)\in\mathbb Z^2}\int_{\mathbb R^2}f_r\Big(s,m'-\frac{1}{2},n'_k-\frac{1}{2}\Big)e^{2\pi\sqrt{-1}am'+2\pi\sqrt{-1}bn'}dm'dn'.$$
Using the change of variable $x=\frac{2\pi m}{r}=\frac{2\pi m'}{r}-\frac{\pi}{r}$ and $y=\frac{2\pi n}{r}=\frac{2\pi n'}{r}-\frac{\pi}{r},$ we get the result.
\end{proof}

In Section \ref{asymp}, we will show that among all the Fourier coefficients in this summation, the two $\hat f_r(s^+,m^+,0)$ and $\hat f_r(s^-,m^-,0)$ are the leading ones, where $s^\pm$ and $m^\pm$ are as defined in Lemma \ref{arith} (1). In the rest of the paper, we let 
$$V_r^+(x,y)=V_r(s^+,x,y)-4\pi m^+x$$
and let
$$V_r^-(x,y)=V_r(s^-,x,y)-4\pi m^-x.$$
Then by Lemma \ref{arith} (1), we have
\begin{equation*}
\begin{split}
V_r^+(x,y)=\frac{-px^2+2\pi x}{q}-2\pi x+4xy-\varphi_r\Big(\pi-y-x-\frac{\pi}{r}\Big)+\varphi_r\Big(y-x+\frac{\pi}{r}\Big)+K(s^+)\pi^2,
\end{split}
\end{equation*}
and
\begin{equation*}
\begin{split}
V_r^-&(x,y)=\frac{-px^2-2\pi x}{q}-2\pi x+4xy-\varphi_r\Big(\pi-y-x-\frac{\pi}{r}\Big)+\varphi_r\Big(y-x+\frac{\pi}{r}\Big)+K(s^-)\pi^2
\end{split}
\end{equation*}
on the region $D,$ and a similar formula on the regions $D'$ and $D''.$   As stated in Lemma \ref{LA} in Section \ref{asymp},  the functions $V_r^\pm$ are closely related to the functions 
\begin{equation*}
\begin{split}
V^+(x,y)=&\frac{-px^2+2\pi x}{q}-2\pi x+4xy-\mathrm{Li}_2\big(e^{-2\sqrt{-1}(y+x)}\big)+\mathrm{Li}_2\big(e^{2\sqrt{-1}(y-x)}\big)+K(s^+)\pi^2\\
\end{split}
\end{equation*}
and
\begin{equation*}
\begin{split}
V^-(x,y)=&\frac{-px^2-2\pi x}{q}-2\pi x+4xy-\mathrm{Li}_2\big(e^{-2\sqrt{-1}(y+x)}\big)+\mathrm{Li}_2\big(e^{2\sqrt{-1}(y-x)}\big)+K(s^-)\pi^2\\
\end{split}
\end{equation*}
whose critical values   will determine the exponential growth rate of the invariants. 

 We also notice that $V_r^\pm(x,y)$ and $V^\pm(x,y)$ define holomorphic functions on the regions $D_{\mathbb C, \delta},$ $D'_{\mathbb C, \delta}$ and $D''_{\mathbb C,  \delta}$ of $\mathbb C^2,$  where for $ \delta\geqslant 0,$
$$D_{\mathbb C,  \delta}=\Big\{(x,y)\in \mathbb C^2\ \Big|\  \delta<\mathrm{Re}(y)+\mathrm{Re}(x)<\frac{\pi}{2}- \delta,  \delta < \mathrm{Re}(y)-\mathrm{Re}(x)< \frac{\pi}{2}- \delta\Big\},$$
$$D'_{\mathbb C,  \delta}=\Big\{(x,y)\in \mathbb C^2\ \Big|\  \delta < \mathrm{Re}(y)+\mathrm{Re}(x)<\frac{\pi}{2}- \delta, \pi+ \delta< \mathrm{Re}(y)-\mathrm{Re}(x) < \frac{3\pi}{2}- \delta \Big\}$$
and
$$D''_{\mathbb C,  \delta}=\Big\{(x,y)\in \mathbb C^2\ \Big|\ \pi+ \delta < \mathrm{Re}(y)+\mathrm{Re}(x) < \frac{3\pi}{2}- \delta,  \delta<\mathrm{Re}(y)-\mathrm{Re}(x) <\frac{\pi}{2}- \delta\Big\}.$$
When $\ \delta=0,$ we denote the corresponding regions by $D_{\mathbb C},$ $D'_{\mathbb C}$ and $D''_{\mathbb C},$ and let $\mathcal D_\mathbb{C}=D_{\mathbb C}\cup D'_{\mathbb C}\cup D''_{\mathbb C}.$

%%%%%%%%%%%%%%%%%%%%%%%%%%%%%%%%%%%%%%%%%%%%%%%%%%%%%%%%%%%%%%%%%%%%%%%%%%%%%%%%%%%%%%%%%%%%%%%%%%%%%%%%%%%%%%%%%%%%%%%%%%%%%%%%%%%%%%%%%%%%%%%%%%%%%%%%%%%%%%%%%%%%%%%%%%%%%%%%%%%%%%%%

\section{Geometry of the critical points}\label{geometry}

The goal of this section is to understand the geometric meaning of the critical points and the critical values of the functions $V^\pm$ defined in the previous section. The main result is Proposition \ref{Vol} which shows that the critical values of $V^\pm$ have real and imaginary parts the volume of $M$ and modulo $\pi^2\mathbb Z$ the Chern-Simons invariant of $M,$ and the determinants of the Hessian matrices of $V^\pm$ at the critical points give the adjoint twisted Reidemeister torsion of $M.$ The key observation is Lemma \ref{=} and Lemma \ref{==} that the system of critical point equations of $V^\pm$ is equivalent to the system of hyperbolic gluing equations (consisting of an edge equation and a $\frac{p}{q}$ Dehn-surgery equation) for a particular ideal triangulation of the figure-$8$ knot complement.

According to Thurston's notes \cite{T}, the complement of the figure-$8$ knot has an ideal triangulation as drawn in Figure \ref{figure-8}. We let $A$ and $B$ be the shape parameters of the two ideal tetrahedra 
and let $A'=\frac{1}{1-A},$ $A''=1-\frac{1}{A},$ $B'=\frac{1}{1-B}$ and $B''=1-\frac{1}{B}.$ 

\begin{figure}[htbp]
\centering
\includegraphics[scale=0.3]{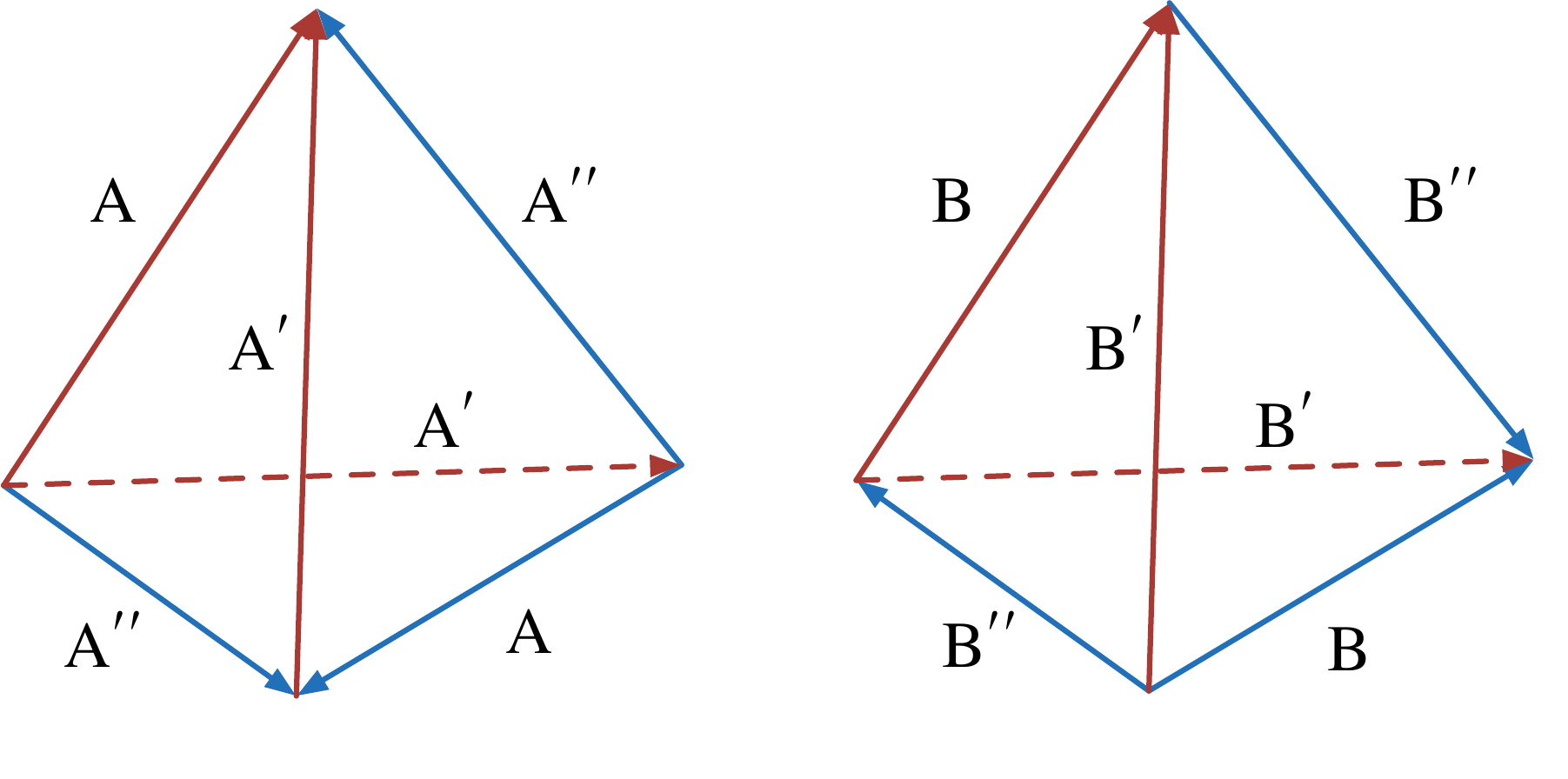}
\caption{Thurston's ideal triangulation of the figure-$8$ knot complement}
\label{figure-8}
\end{figure}

In Figure \ref{holonomy} is a fundamental domain of the boundary of the complement of a tubular neighborhood of the figure-$8$ knot.

\begin{figure}[htbp]
\centering
\includegraphics[scale=0.3]{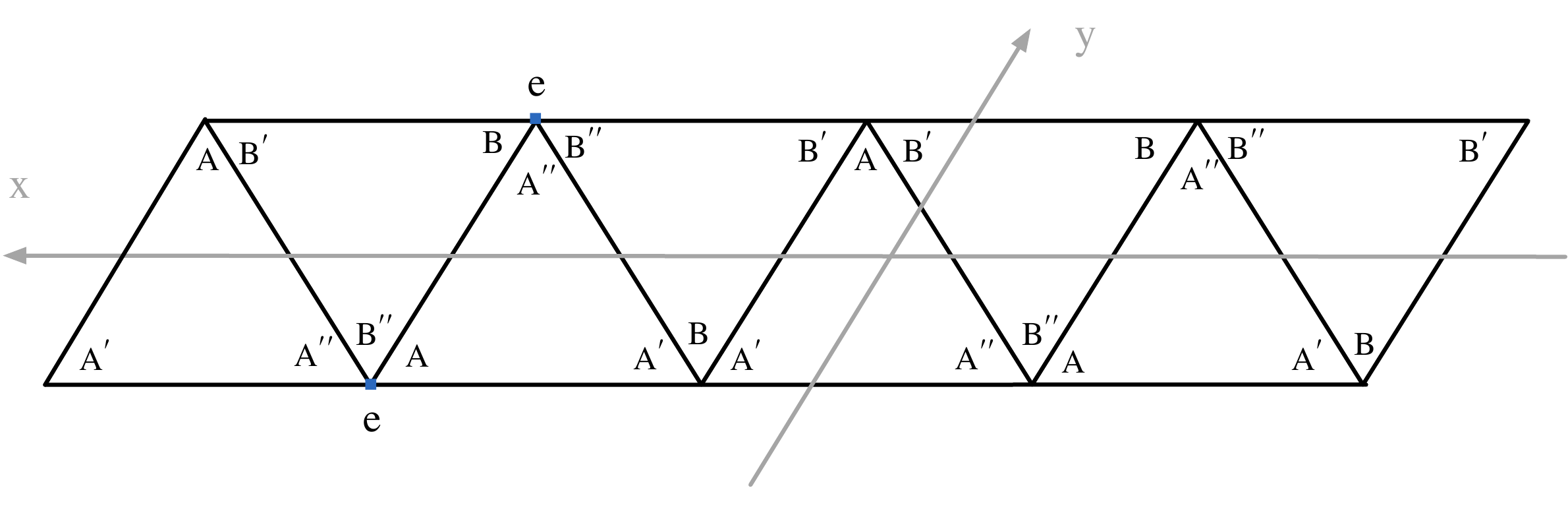}
\caption{Combinatorics around the boundary}
\label{holonomy}
\end{figure}

Recall that for $z\in \mathbb C\setminus (-\infty,0],$ the logarithmic function is defined by
$$\log z=\ln|z|+\sqrt{-1}\arg z$$ with $-\pi<\arg z<\pi.$

Then the holonomy around the edge $e$ is
$$\mathrm{H}(e)=\log A+2\log A''+\log B+2\log B'',$$
and the holonomies of the curves $x$ and $y$ are respectively
$$\mathrm{H}(x)=2\log B+2\log B''-2\log A-2\log A''$$ and
$$\mathrm{H}(y)=\log B'-\log A''.$$

By \cite{T}, we can choose the meridian $m=y$ and the longitude $l=x+2y.$ Hence
$$\mathrm{H}(m)=\log B'-\log A'',$$
and
\begin{equation*}
\begin{split}
\mathrm{H}(l)&=2\log B+2\log B'+2\log B''-2\log A-4\log A''\\
&=2\pi\sqrt{-1}-2\log A-4\log A''.
\end{split}
\end{equation*}
Then the system of hyperbolic gluing equations 
\begin{equation*}
\left \{\begin{array}{l}\mathrm{H}(e)=2\pi\sqrt{-1}\\
\\
p \mathrm{H}(m)+q\mathrm{H}(l)= 2\pi\sqrt{-1}
\end{array}\right.
\end{equation*} 
can be written as
\begin{equation}\label{HG}
\left \{\begin{array}{l}
\log A+2\log A''+\log B+2\log B''=2\pi\sqrt{-1}\\
\\
p(\log B'-\log A'')+q(2\pi\sqrt{-1}-2\log A-4\log A'')=2\pi\sqrt{-1}.
\end{array}\right.
\end{equation} 

%%%%%%%%%%%%%%%%%%%%%%%%%%%%%%%%%%%%%%%%%%%%%%%%%%%%%%%%%%%%%%%%%%%%%%%%%%%%%%%%%%%%%%%%%%%%%%%%%%%%%%%%%%%%%%%%%%%%%%%%%%%%%%%%%%%%%%%%%%%%%%%%%%%%%%%%%%%%%%%%%%%%%%%%%%%%%%%%%%%%%%%%

Now for the critical point equations  of $V^\pm,$ by taking the partial derivatives, we have 
$$\frac{\partial V^\pm}{\partial x}=\frac{-2px\pm 2\pi}{q}+4y-2\pi-2\sqrt{-1}\log(1-e^{-2\sqrt{-1}(y+x)})+2\sqrt{-1}\log(1-e^{2\sqrt{-1}(y-x)})$$
and
$$\frac{\partial V^\pm}{\partial y}=4x-2\sqrt{-1}\log(1-e^{-2\sqrt{-1}(y+x)})-2\sqrt{-1}\log(1-e^{2\sqrt{-1}(y-x)}).$$
Hence the system of critical point equations of $V^\pm(x,y)$ is 
\begin{equation}\label{C}
\left \{\begin{array}{l}
4x-2\sqrt{-1}\log(1-e^{-2\sqrt{-1}(y+x)})-2\sqrt{-1}\log(1-e^{2\sqrt{-1}(y-x)})=0\\
\\
\frac{-2px\pm 2\pi}{q}+4y-2\pi-2\sqrt{-1}\log(1-e^{-2\sqrt{-1}(y+x)})+2\sqrt{-1}\log(1-e^{2\sqrt{-1}(y-x)})=0.
\end{array}\right.
\end{equation}

\begin{lemma}\label{=}
In $D_{\mathbb C},$ if we let $A=e^{2\sqrt{-1}(y+x)}$ and $B=e^{2\sqrt{-1}(y-x)},$  then
 the system of critical point equations (\ref{C}) of $V^+$ is equivalent to the system of hyperbolic glueing equations (\ref{HG}).
\end{lemma}
 
\begin{proof}  In $D_{\mathbb C},$ we have
\begin{equation*}
\left \{\begin{array}{l}
\log A= 2\sqrt{-1}(y+x),\\
 \log A'=\pi\sqrt{-1}-2\sqrt{-1}(y+x)-\log(1-e^{-2\sqrt{-1}(y+x)}),\\
 \log A''=\log(1-e^{-2\sqrt{-1}(y+x)}),\\
 \log B=2\sqrt{-1}(y-x),\\
\log B'=-\log(1-e^{2\sqrt{-1}(y-x)}),\\
 \log B''=\pi\sqrt{-1}-2\sqrt{-1}(y-x)+\log (1-e^{2\sqrt{-1}(y-x)}).
\end{array}\right.
\end{equation*}

For one direction, we assume that $(x,y)\in D_{\mathbb C}$ is a solution of (\ref{C}) with the ``$+$'' chosen. Then
\begin{equation*}
\begin{split}
\mathrm{H}(e)&=\log A+2\log A''+\log B+2\log B''\\
&=4x\sqrt{-1}+2\log (1-e^{-2\sqrt{-1}(y+x)})+2\log (1-e^{2\sqrt{-1}(y-x)})+2\pi\sqrt{-1}\\
&=2\pi\sqrt{-1},
\end{split}
\end{equation*}
where the last equality comes from the first equation of (\ref{C}). Hence the edge equation is satisfied. 

Next, we compute $\mathrm{H}(m)$ and $\mathrm{H}(l).$ We have
\begin{equation}\label{m}
\begin{split}
\mathrm{H}(m)&=\log B'-\log A''\\
&=-\log (1-e^{2\sqrt{-1}(y-x)})-\log (1-e^{-2\sqrt{-1}(y+x)})\\\
&=2x\sqrt{-1},
\end{split}
\end{equation}
where the last equality comes from the first equation of (\ref{C}); and 
\begin{equation}\label{l}
\begin{split}
\mathrm{H}(l)&=2\pi\sqrt{-1}-2\log A-4\log A''\\
&=2\pi\sqrt{-1}-2\log A+(4x\sqrt{-1}-2\log B')-2\log A''\\
&=-4y\sqrt{-1}+2\pi\sqrt{-1}-2\log(1-e^{-2\sqrt{-1}(y+x)})+2\log(1-e^{2\sqrt{-1}(y-x)}),
\end{split}
\end{equation}
where the second equality comes from (\ref{m}). Equations (\ref{m}), (\ref{l}) and the second equation of (\ref{C}) then imply that 
$$\frac{p\mathrm{H}(m)\sqrt{-1}+2\pi}{q}+\mathrm{H}(l)\sqrt{-1}=0,$$
which is equivalent to the $\frac{p}{q}$ Dehn-surgery equation 
$$p\mathrm{H}(m)+q\mathrm{H}(l)=2\pi\sqrt{-1}.$$

For the other direction, assume that $(A,B)$ is a  solution of (\ref{HG}). Then the edge equation implies the first equation of (\ref{C}); and (\ref{m}), (\ref{l}) and the Dehn-surgery equation  imply that the second equation of (\ref{C}).
\end{proof}

\begin{lemma}\label{==}
In $D_{\mathbb C},$ if we let $A=e^{2\sqrt{-1}(y-x)}$ and $B=e^{2\sqrt{-1}(y+x)},$ then the system of critical point equations (\ref{C}) of $V^-$ is equivalent to the system of hyperbolic glueing equations (\ref{HG}).
\end{lemma}

\begin{proof} This time we have \begin{equation*}
\left \{\begin{array}{l}
 \log A=2\sqrt{-1}(y-x),\\
\log A'=-\log(1-e^{2\sqrt{-1}(y-x)}),\\
 \log A''=\pi\sqrt{-1}-2\sqrt{-1}(y-x)+\log (1-e^{2\sqrt{-1}(y-x)}),\\
\log B= 2\sqrt{-1}(y+x),\\
 \log B'=\pi\sqrt{-1}-2\sqrt{-1}(y+x)-\log(1-e^{-2\sqrt{-1}(y+x)}),\\
 \log B''=\log(1-e^{-2\sqrt{-1}(y+x)}).\\
\end{array}\right.
\end{equation*} 
The rest of the proof is similar to that of Lemma \ref{=}. For one direction, we assume that $(x,y)\in D_{\mathbb C}$ is a solution of (\ref{C}) with the ``$-$'' chosen. Then
\begin{equation*}
\begin{split}
\mathrm{H}(e)=\log A+2\log A''+\log B+2\log B''=2\pi\sqrt{-1}.
\end{split}
\end{equation*}
Hence the edge equation is satisfied. 

For the computation of $\mathrm{H}(m)$ and $\mathrm{H}(l),$ we have
\begin{equation}\label{m'}
\begin{split}
\mathrm{H}(m)&=\log B'-\log A''\\
&=-4x\sqrt{-1}-\log (1-e^{2\sqrt{-1}(y-x)})-\log (1-e^{-2\sqrt{-1}(y+x)})\\\
&=-2x\sqrt{-1},
\end{split}
\end{equation}
where the last equality comes from the first equation of (\ref{C}); and
\begin{equation}\label{l'}
\begin{split}
\mathrm{H}(l)&=2\pi\sqrt{-1}-2\log A-4\log A''\\
&=2\pi\sqrt{-1}-2\log A+(-4x\sqrt{-1}-2\log B')-2\log A''\\
&=4y\sqrt{-1}-2\pi\sqrt{-1}+2\log(1-e^{-2\sqrt{-1}(y+x)})-2\log(1-e^{2\sqrt{-1}(y-x)}),
\end{split}
\end{equation}
where the second equality comes from (\ref{m'}). Equations (\ref{m'}), (\ref{l'}) and the second equation of (\ref{C}) then imply that 
$$\frac{-p\mathrm{H}(m)\sqrt{-1}-2\pi}{q}-\mathrm{H}(l)\sqrt{-1}=0,$$
which is equivalent to the $\frac{p}{q}$ Dehn-surgery equation 
$$p\mathrm{H}(m)+q\mathrm{H}(l)=2\pi\sqrt{-1}.$$

For the other direction, assume that $(A,B)$ is a  solution of (\ref{HG}). Then the edge equation implies the first equation of (\ref{C}); and (\ref{m'}), (\ref{l'}) and the Dehn-surgery equation  imply that the second equation of (\ref{C}).
\end{proof}

By Thurston's notes\,\cite{T}, for each relatively primed $(p,q)\neq (\pm 1, 1),$ $(\pm 2, \pm1),$ $(\pm 3,  \pm1)$ and $(\pm 4,  \pm1),$  there is a unique solution $A_0$ and $B_0$ of (\ref{HG}) with $\mathrm{Im}A_0>0$ and $\mathrm{Im}B_0>0.$ Then by Lemma \ref{=} and \ref{==}, we have 

\begin{corollary}\label{c} The point $$(x_0,y_0)=\Big(\frac{\log A_0-\log B_0}{4\sqrt{-1}},\frac{\log A_0+\log B_0}{4\sqrt{-1}}\Big)$$ is the unique critical point of $V^+$ in $D_{\mathbb C},$ and $(-x_0,y_0)$ is the unique critical point of $V^-$ in $D_{\mathbb C}.$
\end{corollary}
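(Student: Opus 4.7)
The plan is to derive the corollary directly from Thurston's uniqueness of the geometric solution together with the identifications established in Lemma \ref{=} and Lemma \ref{==}. For $V^+$, I would associate to each $(x,y)\in D_{\mathbb C}$ the pair $(A,B)=(e^{2i(y+x)},e^{2i(y-x)})$. The first step is to verify that this defines a bijection between $D_{\mathbb C}$ and the set of pairs $(A,B)\in\mathbb C^2$ with $\mathrm{Im}(A)>0$ and $\mathrm{Im}(B)>0$. Writing $y+x=u+iv$ with $u=\mathrm{Re}(y+x)\in(0,\pi/2)$, a direct computation gives $A=e^{-2v}(\cos 2u+i\sin 2u)$, so $\mathrm{Im}(A)=e^{-2v}\sin(2u)>0$ since $2u\in(0,\pi)$; and similarly $\mathrm{Im}(B)>0$. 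Conversely, given $(A,B)$ with positive imaginary parts, the principal branch of logarithm fixed in Section 2 recovers $y\pm x$ with real parts in $(0,\pi/2)$, landing back in $D_{\mathbb C}$.

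Next, by Lemma \ref{=}, this bijection identifies critical points of $V^+$ in $D_{\mathbb C}$ with solutions of the hyperbolic gluing equations (\ref{HG}) satisfying $\mathrm{Im}(A)>0$ and $\mathrm{Im}(B)>0$. By the result from Thurston's notes cited just before the corollary, for each admissible Dehn-filling slope $(p,q)$ there is a unique such solution $(A_0,B_0)$. Combining existence and uniqueness through the bijection produces a unique critical point of $V^+$ in $D_{\mathbb C}$, and inverting the map $(x,y)\mapsto(A,B)$ at $(A_0,B_0)$ gives exactly
\[
(x_0,y_0)=\Bigl(\tfrac{\log A_0-\log B_0}{4i},\tfrac{\log A_0+\log B_0}{4i}\Bigr).
\]

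The argument for $V^-$ is identical, replacing Lemma \ref{=} by Lemma \ref{==} and using the alternative identification $(A,B)=(e^{2i(y-x)},e^{2i(y+x)})$. The same positivity check shows this is still a bijection between $D_{\mathbb C}$ and pairs with positive imaginary parts, and inverting at the Thurston solution $(A_0,B_0)$ now swaps the roles of $y+x$ and $y-x$, giving the critical point
\[
\Bigl(\tfrac{\log B_0-\log A_0}{4i},\tfrac{\log A_0+\log B_0}{4i}\Bigr)=(-x_0,y_0).
\]

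There is no real obstacle here: the corollary merely packages Thurston's geometric uniqueness with the algebraic equivalence already supplied by the two lemmas. The one point demanding a moment of care is branch-of-logarithm bookkeeping, namely confirming that the strict inequalities defining $D_{\mathbb C}$ keep $\arg(A)$ and $\arg(B)$ strictly inside $(0,\pi)$, so that the principal branch convention is compatible and the correspondence is a genuine bijection rather than many-to-one; this is immediate from the defining inequalities of $D_{\mathbb C}$.
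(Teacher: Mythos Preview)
Your proposal is correct and follows essentially the same approach as the paper: combine Thurston's uniqueness of the geometric solution $(A_0,B_0)$ with positive imaginary parts with the equivalences in Lemma~\ref{=} and Lemma~\ref{==}. You are more explicit than the paper in verifying that $(x,y)\mapsto(A,B)$ is a genuine bijection between $D_{\mathbb C}$ and pairs with $\mathrm{Im}A>0$, $\mathrm{Im}B>0$, which the paper leaves implicit; this extra care is welcome but does not constitute a different method.
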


\begin{proposition}\label{Vol} 
\begin{enumerate}[(1)]
\item Let $\mathrm{Vol}(M)$ and $\mathrm{CS}(M)$ respectively be the hyperbolic volume and the Chern-Simons invariant of $M.$ Then
 $$V^+(x_0,y_0)-\bigg(K(s^+)+\frac{p'}{q}\bigg)\pi^2=V^-(-x_0,y_0)-\bigg(K(s^-)+\frac{p'}{q}\bigg)\pi^2$$
  for the integer $p'$ defined in (\ref{p'}), and
  $$V^+(x_0,y_0)\equiv V^-(-x_0,y_0)\equiv \sqrt{-1}\Big(\mathrm{Vol}(M)+\sqrt{-1}\mathrm{CS}(M)\Big)\quad(\mathrm{mod}\ \pi^2\mathbb Z).$$

\item Let $\mu=pm+ql,$ let $\rho$  be the holonomy representation of the hyperbolic structure on $M$ restricted to $S^3\setminus K_{4_1}$ and let $\mathrm{Tor}_\mu(S^3\setminus K_{4_1}; \mathrm{Ad}_\rho)$ be the Reidemeister torsion of $S^3\setminus K_{4_1}$ twisted by the adjoint action of $\rho$ with respect to the curve $\mu$\,\cite{P}. Then
$$\det(\mathrm{Hess}V^+)(x_0,y_0)=\det(\mathrm{Hess}V^-)(-x_0,y_0)=\frac{16}{q}\,\mathrm{Tor}_\mu(S^3\setminus K_{4_1}; \mathrm{Ad}_\rho)\neq 0.$$

\item Let $\rho$ be the holonomy representation of the hyperbolic structure on $M$ and let $\mathrm {Tor}(M;\mathrm{Ad}_\rho)$ be the Reidemeister torsion of $M$ twisted by the adjoint action of $\rho$\,\cite{P}. Then
$$\frac{\sin\Big(\frac{x_0}{q}-J(s^+)\pi\Big)}{\sqrt{-\det(\mathrm{Hess}V^+)(x_0,y_0)}}=-\frac{\sin\Big(\frac{-x_0}{q}-J(s^-)\pi\Big)}{\sqrt{-\det(\mathrm{Hess}V^-)(-x_0,y_0)}}=\pm\frac{\sqrt{-q}}{8\sqrt{\mathrm {Tor}(M;\mathrm{Ad}_\rho)}},$$
with the sign $\pm$ equals $(-1)^{J(s^+)-\frac{p'}{q}}.$
\end{enumerate}
 \end{proposition}

\begin{proof} For (1), we for $(x,y)\in D_{\mathbb C}$ have
 \begin{equation*}
 \begin{split}
-\mathrm{Li}_2(e^{-2\sqrt{-1}(y\pm x)})&=\mathrm{Li}_2(e^{2\sqrt{-1}(y\pm x)})+\frac{\pi^2}{6}+\frac{1}{2}\big(\log(-e^{2\sqrt{-1}(y\pm x)})\big)^2\\
&=\mathrm{Li}_2(e^{2\sqrt{-1}(y\pm x)})+\frac{\pi^2}{6}-2y^2-2x^2-\pi^2\mp 4xy+2\pi y\pm 2\pi x,\\
 \end{split}
 \end{equation*}
where the first equality comes from (\ref{Li2}), and the second equality comes from that $0<Re(y)\pm Re(x)<\frac{\pi}{2}$ and hence
$$\log(-e^{2\sqrt{-1}(y\pm x)})=2\sqrt{-1}(y\pm x)-\pi\sqrt{-1}.$$ 
From this we have for all $(x,y)$ with $0<Re(y)\pm Re(x)<\frac{\pi}{2},$
\begin{equation}\label{equal}
 \begin{split}
&V^+(x,y)-\bigg(K(s^+)+\frac{p'}{q}\bigg)\pi^2\\
=& \Big(-\frac{p}{q}-2\Big)x^2+\frac{2\pi x}{q}-2y^2+2\pi y-\frac{5\pi^2}{6}+\mathrm{Li}_2(e^{2\sqrt{-1}(y+x)})+\mathrm{Li}_2(e^{2\sqrt{-1}(y-x)})-\frac{p'\pi^2}{q},
 \end{split}
 \end{equation}
 and
 \begin{equation}\label{equall}
  \begin{split}
&V^-(-x,y)-\bigg(K(s^-)+\frac{p'}{q}\bigg)\pi^2\\
= & \Big(-\frac{p}{q}-2\Big)x^2+\frac{2\pi x}{q}-2y^2+2\pi y-\frac{5\pi^2}{6}+\mathrm{Li}_2(e^{2\sqrt{-1}(y+x)})+\mathrm{Li}_2(e^{2\sqrt{-1}(y-x)})-\frac{p'\pi^2}{q},
 \end{split}
 \end{equation}
which proves the first equility of (1).
 
For the second equality of (1), by (\ref{equal}) and (\ref{equall}) and Lemma \ref{arith} (3) that $K(s^\pm)+\frac{p'}{q}$ is an integer, we have
 $$V^+(x_0,y_0)\equiv V^-(-x_0,y_0)\quad(\text{mod }\pi^2\mathbb Z),$$
and it suffices to show that
 $$V^+(x_0,y_0)\equiv \sqrt{-1}(\mathrm{Vol}(M)+\sqrt{-1}\mathrm{CS}(M))\quad(\text{mod }\pi^2\mathbb Z).$$
 
To this end, we need the following result of Yoshida \cite[Theorem 2]{Y} that if the manifold $M$ is obtained by doing a hyperbolic Dehn-filling from the complement of a hyperbolic knot $K$ in $\mathrm S^3,$ $m$ and $l$ are respectively the meridian and longitude of the boundary of a tubular neighborhood of $K,$ $\gamma$ is isotopic to the core curve of the filled solid torus, and $\mathrm H(m),$ $\mathrm H(l)$ and $\mathrm H(\gamma)$ are respectively the holonomy of them, then 
$$\mathrm{Vol}(M)+\sqrt{-1}\mathrm{CS}(M)=\frac{\Phi(\mathrm{H}(m))}{\sqrt{-1}}-\frac{\mathrm H(m)\mathrm H(l)}{4\sqrt{-1}}+\frac{\pi\mathrm H(\gamma)}{2}\quad(\mathrm{mod}\ \sqrt{-1}\pi^2\mathbb Z),$$
where $\Phi$ is the function (see Neumann-Zagier\,\cite{NZ}) defined on the deformation space of hyperbolic structures on $S^3\setminus K$ parametrized by the holonomy of the meridian $u=\mathrm H(m),$ characterized by 
\begin{equation}\label{char}
\left \{\begin{array}{l}
\frac{\partial \Phi(u)}{\partial u}=\frac{\mathrm H(l)}{2},\\
\\
\Phi(0)=\sqrt{-1}\Big(\mathrm{Vol}(\mathrm S^3\setminus K)+\sqrt{-1}\mathrm{CS}(\mathrm S^3\setminus K)\Big).
\end{array}\right.
\end{equation} 
We will show that 
\begin{equation}\label{Phi}
\Phi(\mathrm H(m))=4x_0y_0-2\pi x_0-\mathrm{Li}_2(e^{-2(y_0+x_0)})+\mathrm{Li}_2(e^{2\sqrt{-1}(y_0-x_0)}),
\end{equation}
\begin{equation}\label{uv}
-\frac{\mathrm H(m)\mathrm H(l)}{4}=\frac{-px_0^2+\pi x_0}{q},
\end{equation}
and 
\begin{equation}\label{gamma}
\frac{\pi\sqrt{-1}}{2}\mathrm H(\gamma)=\frac{\pi x_0 }{q}-\frac{p'\pi^2}{q}
\end{equation}
so that
$$V^+(x_0,y_0)-\bigg(K(s^+)+\frac{p'}{q}\bigg)\pi^2= \Phi(\mathrm{H}(m))-\frac{ \mathrm H(m)\mathrm H(l)}{4}+\frac{\pi\sqrt{-1}}{2}\mathrm H(\gamma),$$
from which the result follows.

For (\ref{Phi}), we let 
$$U(x,y)=4xy-2\pi x-\mathrm{Li}_2(e^{-2\sqrt{-1}(y+x)})+\mathrm{Li}_2(e^{2\sqrt{-1}(y-x)}),$$ and define
$$\Psi(u)=U(x,y(x)),$$
where  $u=2x\sqrt{-1}$  and $y(x)$ is such that 
$$\frac{\partial V^+}{\partial y}\Big|_{(x,y(x))}=0.$$

Since 
$$\frac{\partial U}{\partial y}=\frac{\partial V^+}{\partial y}\quad\text{and}\quad\frac{\partial V^+}{\partial y}\Big|_{(x,y(x))}=0,$$
we have
\begin{equation*}
\begin{split}
\frac{\partial \Psi(u)}{\partial u}=\Big(\frac{\partial U}{\partial x}+\frac{\partial U}{\partial y}\Big|_{(x,y(x))}\frac{\partial y}{\partial x}\Big)\frac{\partial x}{\partial u}=\frac{\partial U}{\partial x}\frac{\partial x}{\partial u}=\frac{\mathrm H(l)}{2},
\end{split}
\end{equation*}
where the last equality comes from (\ref{l}). Also, a direct computation shows $y(0)=\frac{\pi}{6},$ and hence
$$\Psi(0)=U\Big(0,\frac{\pi}{6}\Big)=4\sqrt{-1}\Lambda\Big(\frac{\pi}{6}\Big)=\sqrt{-1}\Big(\mathrm{Vol}(\mathrm S^3\setminus K_{4_1})+\sqrt{-1}\mathrm{CS}(\mathrm S^3\setminus K_{4_1})\Big).$$  Therefore, $\Psi$ satisfies (\ref{char}), and hence $\Psi(u)=\Phi(u).$ 

Since $y(x_0)=y_0,$ and by (\ref{m}) $\mathrm H(m)=2x_0\sqrt{-1},$ we have
$$\Phi(\mathrm H(m))=\Psi(2x_0\sqrt{-1})=U(x_0,y_0),$$
which verifies (\ref{Phi}).

For (\ref{uv}), we have by (\ref{m}) that $\mathrm H(m)=2x_0\sqrt{-1}$ and 
$$\mathrm H(l)=\frac{2\pi\sqrt{-1}-p\mathrm H(m)}{q}=\frac{2\pi\sqrt{-1}-2px_0\sqrt{-1}}{q}.$$
Then $$\mathrm H(m)\mathrm H(l)=2x_0\sqrt{-1}\cdot\frac{2\pi\sqrt{-1}-2px_0\sqrt{-1}}{q}=-4\cdot \frac{-px_0^2+\pi x_0}{q},$$
from which (\ref{uv}) follows.

For (\ref{gamma}), since $pp'+qq'=1,$ we can choose $\gamma=-q'm+p'l$ so that $\mu\cdot\gamma=(pm+ql)\cdot(-q'm+p'l)=1.$ Then 
\begin{equation}\label{hgamma}
\mathrm H(\gamma)=-q'\mathrm H(m)+p'\mathrm H(l)=-q'\cdot 2x_0\sqrt{-1}+p'\cdot\frac{2\pi\sqrt{-1}-2px_0\sqrt{-1}}{q}=\frac{2}{\pi\sqrt{-1}}\cdot\frac{\pi x_0-p'\pi^2}{q},
\end{equation}
from which (\ref{gamma}) follows.

This completes the proof of (1).
\\

For (2), we have by (\ref{equal})
$$\mathrm{Hess}V^+(x_0,y_0)=\begin{bmatrix}
-\frac{2p}{q}-4-\frac{4e^{2\sqrt{-1}(y_0+x_0)}}{1-e^{2\sqrt{-1}(y_0+x_0)}}-\frac{4e^{2\sqrt{-1}(y_0-x_0)}}{1-e^{2\sqrt{-1}(y_0-x_0)}} & -\frac{4e^{2\sqrt{-1}(y_0+x_0)}}{1-e^{2\sqrt{-1}(y_0+x_0)}}+\frac{4e^{2\sqrt{-1}(y_0-x_0)}}{1-e^{2\sqrt{-1}(y_0-x_0)}} \\
-\frac{4e^{2\sqrt{-1}(y_0+x_0)}}{1-e^{2\sqrt{-1}(y_0+x_0)}}+\frac{4e^{2\sqrt{-1}(y_0-x_0)}}{1-e^{2\sqrt{-1}(y_0-x_0)}}  & -4-\frac{4e^{2\sqrt{-1}(y_0+x_0)}}{1-e^{2\sqrt{-1}(y_0+x_0)}}-\frac{4e^{2\sqrt{-1}(y_0-x_0)}}{1-e^{2\sqrt{-1}(y_0-x_0)}}
  \end{bmatrix},$$
 and by (\ref{equall})
  $$\mathrm{Hess}V^-(-x_0,y_0)=\begin{bmatrix}
-\frac{2p}{q}-4-\frac{4e^{2\sqrt{-1}(y_0-x_0)}}{1-e^{2\sqrt{-1}(y_0-x_0)}}-\frac{4e^{2\sqrt{-1}(y_0+x_0)}}{1-e^{2\sqrt{-1}(y_0+x_0)}} & \frac{4e^{2\sqrt{-1}(y_0+x_0)}}{1-e^{2\sqrt{-1}(y_0+x_0)}}-\frac{4e^{2\sqrt{-1}(y_0-x_0)}}{1-e^{2\sqrt{-1}(y_0-x_0)}} \\
\frac{4e^{2\sqrt{-1}(y_0+x_0)}}{1-e^{2\sqrt{-1}(y_0+x_0)}}-\frac{4e^{2\sqrt{-1}(y_0-x_0)}}{1-e^{2\sqrt{-1}(y_0-x_0)}}  & -4-\frac{4e^{2\sqrt{-1}(y_0-x_0)}}{1-e^{2\sqrt{-1}(y_0-x_0)}}-\frac{4e^{2\sqrt{-1}(y_0+x_0)}}{1-e^{2\sqrt{-1}(y_0+x_0)}} 
  \end{bmatrix}.$$
  Hence 
$$\det(\mathrm{Hess}V^+)(x_0,y_0)=\det(\mathrm{Hess}V^-)(-x_0,y_0),$$
and it suffices to prove that 
$$\det(\mathrm{Hess}V^+)(x_0,y_0)=\frac{16}{q}\mathrm{Tor}_\mu(S^3\setminus K_{4_1}; \mathrm{Ad}_\rho).$$

To this end, we for simplicity let $X_0=e^{2\sqrt{-1}x_0}$ and $Y_0=e^{2\sqrt{-1}y_0}.$ Then the first equation of (\ref{C}) implies
\begin{equation}\label{key0}
X_0+X_0^{-1}=Y_0+Y_0^{-1}+1.
\end{equation}
We have 
\begin{equation*}
\begin{split}
\frac{\partial^2V^+}{\partial x^2}(x_0,y_0)=&-\frac{2p}{q}-4-\frac{4X_0Y_0}{1-X_0Y_0}-\frac{4X_0^{-1}Y_0}{1-X_0^{-1}Y_0}\\
=& -\frac{2p}{q}-\frac{4(Y_0-Y_0^{-1})}{Y_0+Y_0^{-1}-X_0-X_0^{-1}}=-\frac{2p}{q}-4(Y_0-Y_0^{-1}),
\end{split}
\end{equation*}
where the last equality comes from (\ref{key0}). Similarly, by (\ref{key0}), we have
\begin{equation*}
\frac{\partial^2V^+}{\partial y^2}(x_0,y_0)=-4-\frac{4X_0Y_0}{1-X_0Y_0}-\frac{4X_0^{-1}Y_0}{1-X_0^{-1}Y_0}=-4(Y_0-Y_0^{-1})
\end{equation*}
and
\begin{equation*}
\frac{\partial^2V^+}{\partial x\partial y}(x_0,y_0)=-\frac{4X_0Y_0}{1-X_0Y_0}+\frac{4X_0^{-1}Y_0}{1-X_0^{-1}Y_0}=-4(X_0-X_0^{-1}).
\end{equation*}
Therefore,
\begin{equation}\label{Hessian}
\begin{split}
\det(\mathrm{Hess}V^+)(x_0,y_0)=& \bigg(-\frac{2p}{q}-4(Y_0-Y_0^{-1})\bigg)\Big(-4(Y_0-Y_0^{-1})\Big)-\Big(-4(X_0-X_0^{-1})\Big)^2\\
=&\frac{8p}{q}(Y_0-Y_0^{-1})+16\Big((Y_0-Y_0^{-1})^2-(X_0-X_0^{-1})^2\Big)\\
=&\frac{8p}{q}(Y_0-Y_0^{-1})+16\Big((Y_0+Y_0^{-1})^2-(X_0+X_0^{-1})^2\Big)\\
=&\frac{8p}{q}(Y_0-Y_0^{-1})+16(Y_0+Y_0^{-1}+X_0+X_0^{-1})(Y_0+Y_0^{-1}-X_0-X_0^{-1})\\
=&\frac{8p}{q}(Y_0-Y_0^{-1})-16(Y_0+Y_0^{-1}+X_0+X_0^{-1}),
\end{split}
\end{equation}
where the last equality comes from (\ref{key0}).

Next we will show that $ \frac{16}{q}\mathrm{Tor}_\mu(S^3\setminus K_{4_1}; \mathrm{Ad}_\rho)$ equals the same quantity. For this, by the change of curve formula \cite[Theorem 4.1 (ii)]{P}, we have
\begin{equation}\label{COC}
\mathrm{Tor}_\mu(S^3\setminus K_{4_1}; \mathrm{Ad}_\rho)= \frac{\partial \mathrm H(\mu)}{\partial \mathrm H(m)}\bigg|_\rho\cdot\mathrm{Tor}_m(S^3\setminus K_{4_1}; \mathrm{Ad}_\rho).
\end{equation}
To compute the  right hand side, we let $X=e^{2\sqrt{-1}x}$ and $Y=e^{2\sqrt{-1}y}.$ Then as the convention in Lemma \ref{=}, the shape parameters  $A=XY$ and $B=X^{-1}Y.$
 Recall that the deformation space of the hyperbolic structures on $M$ with this ideal triangulation consists of the shape parameters $A$ and $B$ that satisfy the edge equation $\mathrm H(e)=2\pi\sqrt{-1},$ which can be written as the first equation of (\ref{HG}). Then by Lemma \ref{=}, it is equivalent to the first equation of \ref{C}, which implies
\begin{equation}\label{key}
X+X^{-1}=Y+Y^{-1}+1.
\end{equation}
As a consequence,
\begin{equation}\label{yx}
\frac{\partial Y}{\partial X}=\frac{Y^2(X^2-1)}{X^2(Y^2-1)}.
\end{equation}
By (\ref{m}) and (\ref{l}), we have 
\begin{equation}\label{hm}
\mathrm H(m)=2x\sqrt{-1}=\log X
\end{equation}
and 
\begin{equation*}
\begin{split}
\mathrm H(l)=&2\pi\sqrt{-1}-2\log A-4\log A''\\
=&2\pi\sqrt{-1}-2\log(XY)-4\log(1-X^{-1}Y^{-1}).
\end{split}
\end{equation*}
Then
$$\frac{\partial \mathrm H(m)}{\partial X}=\frac{1}{X},$$
and by (\ref{yx})
\begin{equation*}
\begin{split}
\frac{\partial \mathrm H(l)}{\partial X}=&-\frac{2}{X}-\frac{4}{X(XY-1)}-\frac{\partial Y}{\partial X}\bigg(\frac{2}{Y}+\frac{4}{Y(XY-1)}\bigg)\\
=&-\frac{2}{X}-\frac{4}{X(XY-1)}-\frac{Y^2(X^2-1)}{X^2(Y^2-1)}\bigg(\frac{2}{Y}+\frac{4}{Y(XY-1)}\bigg)\\
=&\frac{-2(X+X^{-1}+Y+Y^{-1})}{X(Y-Y^{-1})};
\end{split}
\end{equation*}
and by the Chain Rule,
\begin{equation*}
\frac{\partial \mathrm H(l)}{\partial \mathrm H(m)}=\frac{\frac{\partial \mathrm H(l)}{\partial X}}{\frac{\partial \mathrm H(m)}{\partial X}}=\frac{-2(X+X^{-1}+Y+Y^{-1})}{Y-Y^{-1}}.
\end{equation*}
As a consequence, 
\begin{equation}\label{mum}
\frac{\partial \mathrm H(\mu)}{\partial \mathrm H(m)}=\frac{\partial\big(p\mathrm H(m)+q\mathrm H(l)\big)}{\partial \mathrm H(m)}=p-\frac{2q(X+X^{-1}+Y+Y^{-1})}{Y-Y^{-1}}.
\end{equation}
On the other hand, by \cite[Example 1 on p.113 ]{P}, for the holonomy representation $\rho_{\mathrm H(m)}$ of a (possibly incomplete) hyperbolic structure on $S^3\setminus K_{4_1}$ with $\mathrm H(m)$ the holonomy of the meridian $m,$
\begin{equation}\label{Torm}
\begin{split}
\mathrm{Tor}_m(S^3\setminus K_{4_1}; \mathrm{Ad}_{\rho_{\mathrm H(m)}})=&\pm\frac{\sqrt{(e^{\mathrm H(m)}+e^{-\mathrm H(m)}-3)(e^{\mathrm H(m)}+e^{-\mathrm H(m)}+1)}}{2}\\
=&\pm\frac{\sqrt{(X+X^{-1}-3)(X+X^{-1}+1)}}{2}\\
=&\pm \frac{Y-Y^{-1}}{2},
\end{split}
\end{equation}
where the second equation comes from (\ref{hm}) and the last equation comes from (\ref{key}). Notice that the Reidemeister torsion is a quantity defined up to $\pm,$ and here we choose the  ``$+$" sign. Then by (\ref{COC}), (\ref{mum}) and (\ref{Torm}), 
\begin{equation*}
\begin{split}
\frac{16}{q}\mathrm{Tor}_\mu(S^3\setminus K_{4_1}; \mathrm{Ad}_\rho)=&\frac{16}{q} \frac{\partial \mathrm H(\mu)}{\partial \mathrm H(m)}\bigg|_\rho\cdot\mathrm{Tor}_m(S^3\setminus K_{4_1}; \mathrm{Ad}_\rho)\\
=&\frac{16}{q} \frac{Y_0-Y_0^{-1}}{2} \bigg(p-\frac{2q(X_0+X_0^{-1}+Y_0+Y_0^{-1})}{Y_0-Y_0^{-1}}\bigg)\\
=& \frac{8p}{q}(Y_0-Y_0^{-1})-16(X_0+X_0^{-1}+Y_0+Y_0^{-1}).
\end{split}
\end{equation*}
Comparing with (\ref{Hessian}), we have
$$\det(\mathrm{Hess}V^+)(x_0,y_0)=\frac{16}{q}\mathrm{Tor}_\mu(S^3\setminus K_{4_1}; \mathrm{Ad}_\rho).$$

Finally, since the adjoint twisted Reidemeister torsion $\mathrm{Tor}_\mu(S^3\setminus K_{4_1}; \mathrm{Ad}_\rho)$ is a non-zero quantity, $\det(\mathrm{Hess}V^+)(x_0,y_0)=\det(\mathrm{Hess}V^-)(-x_0,y_0)\neq0$ and the Hessian matrices  $\mathrm{Hess}V^+(x_0,y_0)$ and $\mathrm{Hess}V^-(-x_0,y_0)$ are non-singular. 
\\

For (3), by Lemma \ref{arith} (2) that $J(s^+)\equiv -J(s^-)$ $(\text{mod }2\mathbb Z),$ we have
\begin{equation}\label{sin=}
\sin\bigg(\frac{x_0}{q}-J(s^+)\pi\bigg)=-\sin\bigg(\frac{-x_0}{q}-J(s^-)\pi\bigg).
\end{equation}
Together with (2), we have
$$\frac{\sin\Big(\frac{x_0}{q}-J(s^+)\pi\Big)}{\sqrt{-\det(\mathrm{Hess}V^+)(x_0,y_0)}}=-\frac{\sin\Big(\frac{-x_0}{q}-J(s^-)\pi\Big)}{\sqrt{-\det(\mathrm{Hess}V^-)(-x_0,y_0)}},$$
and it suffices to prove that
$$\frac{\sin\Big(\frac{x_0}{q}-J(s^+)\pi\Big)}{\sqrt{-\det(\mathrm{Hess}V^+)(x_0,y_0)}}=\pm\frac{\sqrt{-q}}{8\sqrt{\mathrm {Tor}(M;\mathrm{Ad}_\rho)}}.$$

To this end, we will use the surgery formula \cite[Theorem 4.1 (iii)]{P} that 
\begin{equation}\label{surgeryformula}
\mathrm{Tor}(M;\mathrm{ Ad}_\rho)=\frac{1}{4\sinh ^2 \frac{\mathrm H(\gamma)}{2}}\mathrm{Tor}_\mu(S^3\setminus K_{4_1};\mathrm {Ad}_\rho) .
\end{equation}
By (\ref{hgamma}), we have 
$$\frac{x_0}{q}-\frac{p'\pi}{q}=\frac{\sqrt{-1}}{2}\mathrm H(\gamma).$$
Then by Lemma \ref{arith} (2) that $J(s^+)\equiv \frac{p'}{q}$ $(\text{mod }\mathbb Z),$ we have
\begin{equation}\label{=sinh}
\begin{split}\sin\bigg(\frac{x_0}{q}-J(s^+)\pi\bigg)=(-1)^{J(s^+)-\frac{p'}{q}}\sin\bigg(\frac{x_0}{q}-\frac{p'\pi}{q}\bigg)=(-1)^{J(s^+)-\frac{p'}{q}}\sqrt{-1}\sinh\frac{\mathrm H(\gamma)}{2}.
\end{split}
\end{equation}
Finally, by the surgery formula (\ref{surgeryformula}) and (2), we have 
\begin{equation*}
\begin{split}
\frac{\sin\Big(\frac{x_0}{q}-J(s^+)\pi\Big)}{\sqrt{-\det(\mathrm{Hess}V^+)(x_0,y_0)}}=&(-1)^{J(s^+)-\frac{p'}{q}}\frac{\sqrt{-q}\sinh\frac{\mathrm H(\gamma)}{2}}{\sqrt{16\mathrm{Tor}_\mu(S^3\setminus K_{4_1}; \mathrm{Ad}_\rho)}}\\
=&(-1)^{J(s^+)-\frac{p'}{q}}\frac{\sqrt{-q}}{8\sqrt{\mathrm {Tor}(M;\mathrm{Ad}_\rho)}}.
\end{split}
\end{equation*}
 \end{proof}

The following Lemma \ref{convexity} and Lemma \ref{n0} are crucial in the estimate of the Fourier coefficients in Section \ref{asymp}.

\begin{lemma}\label{convexity}
 In $\mathcal D_{\mathbb C}=D_{\mathbb C}\cup D'_{\mathbb C}\cup D''_{\mathbb C},$  $\mathrm{Im}V^\pm(x,y)$ is strictly concave down in $\mathrm{Re}(x)$ and $\mathrm{Re}(y),$ and is strictly concave up in  $\mathrm{Im}(x)$ and $\mathrm{Im}(y).$
\end{lemma}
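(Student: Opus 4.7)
The plan is to compute the $(\mathrm{Re}(x),\mathrm{Re}(y))$-Hessian of $\mathrm{Im}(V^\pm)$ directly, and to invoke Cauchy--Riemann to transfer the conclusion automatically to the $(\mathrm{Im}(x),\mathrm{Im}(y))$-Hessian. Precisely, since $V^\pm$ is holomorphic in each of $x$ and $y$ on the regions in question, writing $f=V^\pm$ and applying the Cauchy--Riemann equations in each variable gives $\partial^2\mathrm{Im}(f)/\partial\mathrm{Re}(x_j)\partial\mathrm{Re}(x_k)=\mathrm{Im}(f_{x_jx_k})$ while $\partial^2\mathrm{Im}(f)/\partial\mathrm{Im}(x_j)\partial\mathrm{Im}(x_k)=-\mathrm{Im}(f_{x_jx_k})$. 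Thus the Hessian in the imaginary parts equals the negative of the Hessian in the real parts, and it suffices to show the latter is strictly negative definite.

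Differentiating the expressions for $V^\pm_x$ and $V^\pm_y$ already recorded immediately before~(\ref{C}), a direct calculation yields
\[
V^\pm_{xx}=-\tfrac{2p}{q}-4-4P-4Q,\qquad V^\pm_{xy}=-4P+4Q,\qquad V^\pm_{yy}=-4-4P-4Q,
\]
where $P=e^{2i(y+x)}/(1-e^{2i(y+x)})$ and $Q=e^{2i(y-x)}/(1-e^{2i(y-x)})$. Taking imaginary parts (the real summands $-2p/q$ and $-4$ drop out since $p,q$ are integers) and writing $\mu=\mathrm{Im}(P)$ and $\nu=\mathrm{Im}(Q)$, the real-part Hessian becomes
\[
H=-4\begin{pmatrix}\mu+\nu & \mu-\nu\\ \mu-\nu & \mu+\nu\end{pmatrix},
\]
which diagonalizes in the basis $\{(1,1),(1,-1)\}$ with eigenvalues $-8\mu$ and $-8\nu$. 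Strict negative definiteness of $H$ therefore reduces to the sign check $\mu>0$ and $\nu>0$.

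The final step verifies these signs on each of $D_{\mathbb C}$, $D'_{\mathbb C}$, $D''_{\mathbb C}$. The elementary identity $\mathrm{Im}(z/(1-z))=\mathrm{Im}(z)/|1-z|^2$ reduces $\mu,\nu>0$ to $\mathrm{Im}(e^{2i(y+x)})>0$ and $\mathrm{Im}(e^{2i(y-x)})>0$. Factoring $e^{2i(y\pm x)}=e^{-2\mathrm{Im}(y\pm x)}\,e^{2i\mathrm{Re}(y\pm x)}$, this becomes $\sin(2\mathrm{Re}(y\pm x))>0$, which holds on each of the three regions because their defining inequalities force $2\mathrm{Re}(y\pm x)$ to lie in $(0,\pi)\cup(2\pi,3\pi)$, on both of which $\sin$ is strictly positive.

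No serious obstacle is anticipated; the computation is routine. The only mild subtlety is that $V^\pm$ is literally defined only on $D_{\mathbb C}$, with ``corresponding changes of variables'' on the other two regions. However, the identities $e^{2i(y-x-\pi)}=e^{2i(y-x)}$ and $e^{2i(2\pi-y-x)}=e^{-2i(y+x)}$ show that the explicit formula for $V^\pm$, and hence its complex Hessian, is the same across all three regions, so the single sign analysis above handles them uniformly.
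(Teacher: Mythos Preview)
Your proof is correct and follows essentially the same approach as the paper: compute the real-part Hessian of $\mathrm{Im}V^\pm$, reduce its negative definiteness to $\mathrm{Im}(e^{2i(y\pm x)})>0$, and verify the latter from the defining inequalities of the region. The only cosmetic differences are that the paper starts from the rewritten form~(\ref{equal}) rather than differentiating the first partials before~(\ref{C}), factors the Hessian as $-M\,\mathrm{diag}\,M^{T}$ instead of diagonalizing, and appeals to harmonicity for the imaginary-part statement where you invoke Cauchy--Riemann directly; your Cauchy--Riemann argument in fact handles the off-diagonal entries more cleanly. Your extension to $D'_{\mathbb C}$ and $D''_{\mathbb C}$ is correct but not needed, since the lemma as stated concerns only $D_{\mathbb C}$.
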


\begin{proof}
Using (\ref{equal}), taking the second derivatives of $\mathrm{Im}V^\pm$ with respect to $\mathrm{Re}(x)$ and $\mathrm{Re}(y),$ we have
\begin{equation*}
\begin{split}
\mathrm{Hess}(\mathrm{Im}V^\pm)&= \begin{bmatrix}
-\frac{4\mathrm{Im}e^{2\sqrt{-1}(y+x)}}{|1-e^{2\sqrt{-1}(y+x)}|^2}-\frac{4\mathrm{Im}e^{2\sqrt{-1}(y-x)}}{|1-e^{2\sqrt{-1}(y-x)}|^2} & -\frac{4\mathrm{Im}e^{2\sqrt{-1}(y+x)}}{|1-e^{2\sqrt{-1}(y+x)}|^2}+\frac{4\mathrm{Im}e^{2\sqrt{-1}(y-x)}}{|1-e^{2\sqrt{-1}(y-x)}|^2}\\
&\\
-\frac{4\mathrm{Im}e^{2\sqrt{-1}(y+x)}}{|1-e^{2\sqrt{-1}(y+x)}|^2}+\frac{4\mathrm{Im}e^{2\sqrt{-1}(y-x)}}{|1-e^{2\sqrt{-1}(y-x)}|^2} & -\frac{4\mathrm{Im}e^{2\sqrt{-1}(y+x)}}{|1-e^{2\sqrt{-1}(y+x)}|^2}-\frac{4\mathrm{Im}e^{2\sqrt{-1}(y-x)}}{|1-e^{2\sqrt{-1}(y-x)}|^2}
  \end{bmatrix}\\
&=-\begin{bmatrix}
2&-2\\
2 & 2 
  \end{bmatrix}
  \begin{bmatrix}
\frac{\mathrm{Im}e^{2\sqrt{-1}(y+x)}}{|1-e^{2\sqrt{-1}(y+x)}|^2}& 0\\
0 & \frac{\mathrm{Im}e^{2\sqrt{-1}(y-x)}}{|1-e^{2\sqrt{-1}(y-x)}|^2}
  \end{bmatrix}
  \begin{bmatrix}
2&2\\
-2 & 2 
  \end{bmatrix}.
  \end{split}
  \end{equation*}
Since in $\mathcal D_{\mathbb C},$ $\mathrm{Im}e^{2\sqrt{-1}(y+x)}>0$ and $\mathrm{Im}e^{2\sqrt{-1}(y-x)}>0,$ the diagonal matrix in the middle is positive definite, and hence $\mathrm{Hess}(\mathrm{Im}V^\pm)$ is negative definite. Therefore, $\mathrm{Im}V$ is concave down in $\mathrm{Re}(x)$ and $\mathrm{Re}(y).$ Since $\mathrm{Im}V^\pm$ is harmonic, it is concave up in $\mathrm{Im}(x)$ and $\mathrm{Im}(y).$   
\end{proof}

\begin{lemma}\label{n0} $\mathrm{Im}(x_0)\neq 0.$
\end{lemma}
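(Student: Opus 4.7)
The plan is to exploit the geometric interpretation of $x_0$ as (essentially) the meridian holonomy, which was already identified inside the proof of Proposition \ref{Vol}. Equation (\ref{m}) there gives $\mathrm{H}(m) = 2x_0 i$; writing $x_0 = a+bi$ with $a,b\in\mathbb R$ yields $\mathrm{H}(m) = -2b + 2ai$, so
$$\mathrm{Im}(x_0) = -\tfrac{1}{2}\mathrm{Re}(\mathrm{H}(m)).$$
Thus the problem reduces to showing $\mathrm{Re}(\mathrm{H}(m))\neq 0$.

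To this end, I would combine the Dehn-filling equation $p\mathrm{H}(m) + q\mathrm{H}(l) = 2\pi i$ with the relation $\gamma = q'm - p'l$ (coming from $pp'+qq'=1$), which gives $\mathrm{H}(\gamma) = q'\mathrm{H}(m) - p'\mathrm{H}(l)$. Eliminating $\mathrm{H}(l)$ yields
$$\mathrm{H}(\gamma) = \frac{\mathrm{H}(m) - 2\pi i\, p'}{q}.$$
Since $p'$ and $q$ are real integers, taking real parts gives $\mathrm{Re}(\mathrm{H}(m)) = q\cdot\mathrm{Re}(\mathrm{H}(\gamma))$.

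The decisive step is the geometric input that $M$ is hyperbolic: the core curve $\gamma$ of the attached solid torus is freely homotopic to a simple closed geodesic in $M$, and $\mathrm{H}(\gamma)$ is its complex length, whose real part equals the (strictly positive) length of that geodesic. Combined with $|q|\geqslant 2$, this forces $\mathrm{Re}(\mathrm{H}(m)) = q\cdot\mathrm{Re}(\mathrm{H}(\gamma)) \neq 0$, hence $\mathrm{Im}(x_0)\neq 0$. The only subtle point is making sure one is using the convention in which $\mathrm{H}(\gamma)$ is the complex length of the core geodesic (so that the positivity of the real part is unambiguous), but this is precisely the convention already in force via Yoshida's formula used in the proof of Proposition \ref{Vol}, so no further work is required.
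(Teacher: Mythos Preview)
Your proof is correct and follows essentially the same approach as the paper's: both reduce the claim to the geometric fact that the core curve $\gamma$ of the filled solid torus is a closed geodesic of strictly positive length, via the relation $\mathrm{H}(m)=2x_0 i$ and the Dehn-filling equation. The only cosmetic difference is that the paper argues by contradiction (if $\mathrm{Im}(x_0)=0$ then $\mathrm H(m)$, $\mathrm H(l)$, and hence $\mathrm H(\gamma)$ are all purely imaginary, forcing $\gamma$ to have length zero), whereas you compute directly that $\mathrm{Re}(\mathrm H(m))=q\cdot\mathrm{Re}(\mathrm H(\gamma))$; the mention of $|q|\geqslant 2$ is unnecessary, since $q\neq 0$ already suffices.
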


\begin{proof} By (\ref{m}), the holonomy of the meridian $\mathrm{H}(m)=2x_0\sqrt{-1}.$ We prove by contradiction. Suppose $\mathrm{Im}(x_0)=0,$ then $\mathrm{H}(m)$ is purely imaginary. As a consequence, $\mathrm{H}(l)=\frac{2\pi\sqrt{-1} -p\mathrm{H}(m)}{q}$ is also purely imaginary.  This implies that the holonomy of the core curve of the filled solid torus $H(\gamma)=q'\mathrm H(m)-p'\mathrm H(l)$ is purely imaginary, ie., $\gamma$ has length zero, which is a contradiction.
\end{proof}

%%%%%%%%%%%%%%%%%%%%%%%%%%%%%%%%%%%%%%%%%%%%%%%%%%%%%%%%%%%%%%%%%%%%%%%%%%%%%%%%%%%%%%%%%%%%%%%%%%%%%%%%%%%%%%%%%%%%%%%%%%%%%%%%%%%%%%%%%%%%%%%%%%%%%%%%%%%%%%%%%%%%%%%%%%%%%%%%%%%%%%%%

\section{Asymptotics of the Reshetikhin-Turaev invariants}\label{asymp}

The goal of this section is to prove Theorem \ref{main1} by estimating each of the Fourier coefficients $\hat f_r(s,m,n)$ in Proposition \ref{Poisson}. In Section \ref{estleading} we estimated the two leading ones $\hat f_r(s^+,m^+,0)$ and $\hat f_r(s^-,m^-,0),$ and in Sections \ref{estother} we estimate the others. Finally in Section \ref{proof} we show that $\hat f_r(s^+,m^+,0)+\hat f_r(s^-,m^-,0)$ has the desired asymptotic behavior and the sum of all the other Fourier coefficients are neglectable, which completes the proof. We will also prove Theorem \ref{main2} in the end of Section \ref{proof}.

%%%%%%%%%%%%%%%%%%%%%%%%%%%%%%%%%%%%%%%%%%%%%%%%%%%%%%%%%%%%%%%%%%%%%%%%%%%%%%%%%%%%%%%%%%%%

\subsection{Estimates of the leading Fourier coefficients}\label{estleading}

The main result of this section is Proposition \ref{leading} where we estimate the integrals in the Fourier coefficients $\hat f_r(s^+,m^+,0)$ and $\hat f_r(s^-,m^-,0)$  on the region $D_{\mathbb C,\delta}$ as defined in Section \ref{psf}, which turn out to be the leading terms in the summation of Proposition \ref{Poisson}.

We need the following Proposition \ref{saddle}, which is a generalization of the standard Steepest Descent Theorem (see eg. \cite[Theorem 7.2.8]{MH}) and is also stated and proved in Ohtsuki\,\cite[Proposition 3.5, Remarks 3.3, 3.6]{O} in a slightly different form. For the readers' convenience, we include a proof in the Appendix.

\begin{proposition}\cite{O}\label{saddle}
Let $D$ be a region in $\mathbb C^n$ and let $f(z_1,\dots, z_n)$ and $g(z_1,\dots, z_n)$ be holomorphic functions on $D$ independent of $r$. Let $f_r(z_1,\dots,z_n)$ be a holomorphic function of the form
$$ f_r(z_1,\dots, z_n) = f(z_1,\dots, z_n) + \frac{\upsilon_r(z_1,\dots,z_n)}{r^2}.$$
Let $S$ be an embedded real $n$-dimensional closed disk in $D$ and let $(c_1,\dots, c_n)$ be a point on $S.$ If
\begin{enumerate}[(1)]
\item $(c_1, \dots, c_n)$ is a critical point of $f$ in $D,$
\item $\mathrm{Re}(f)(c_1,\dots,c_n) > \mathrm{Re}(f)(z_1,\dots,z_n)$ for all $(z_1,\dots,z_n) \in S\setminus \{(c_1,\dots,c_n)\},$
\item the domain $\{(z_1,\dots,z_n)\in D\ |\ \mathrm{Re}{f}(z_1,\dots,z_n)<\mathrm{Re}{f}(c_1,\dots,c_n)\}$ deformation retracts to $S\setminus\{(c_1,\dots,c_n)\},$
\item the Hessian matrix $\mathrm{Hess}(f)(c_1,\dots,c_n)$ of $f$ at $(c_1,\dots,c_n)$ is non-singular,
\item $g(c_1,\dots,c_n) \neq 0,$ and
\item $|\upsilon_r(z_1,\dots,z_n)|$ is bounded from above by a constant independent of $r$ in $D,$
\end{enumerate}
then
\begin{equation*}
\begin{split}
 \int_S g(z_1,\dots, z_n) &e^{rf_r(z_1,\dots,z_n)} dz_1\dots dz_n\\
 &= \Big(\frac{2\pi}{r}\Big)^{\frac{n}{2}}\frac{g(c_1,\dots,c_n)}{\sqrt{-\det\mathrm{Hess}(f)(c_1,\dots,c_n)}} e^{rf(c_1,\dots,c_n)} \Big( 1 + O \Big( \frac{1}{r} \Big) \Big).
 \end{split}
 \end{equation*}
\end{proposition}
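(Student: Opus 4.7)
The plan is to treat this as a standard saddle point analysis for a Laplace-type integral of a holomorphic function over a real $n$-dimensional contour, carried out in three stages: absorbing the $\upsilon_r/r^2$ perturbation, localizing to a neighborhood of the critical point, and reducing to a model Gaussian integral.

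First I would absorb the lower-order perturbation. Writing $e^{r f_r} = e^{rf}\, e^{\upsilon_r/r}$ and using that $|\upsilon_r|$ is uniformly bounded on $D$ by hypothesis (5), the factor $e^{\upsilon_r/r}$ is uniformly $1 + O(1/r)$ on $S$. Hence
\begin{equation*}
\int_S g \, e^{r f_r} \, dz_1\cdots dz_n = \Bigl(\int_S g \, e^{rf} \, dz_1\cdots dz_n\Bigr)\bigl(1 + O(1/r)\bigr),
\end{equation*}
and it suffices to obtain the claimed asymptotic for $\int_S g\, e^{rf}\, d\mathbf{z}$. Next I would localize to a fixed (small, $r$-independent) neighborhood $U \subset S$ of $c = (c_1,\ldots,c_n)$. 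By condition (2) and compactness of the closed disk $S$, there is $\eta = \eta(U) > 0$ with $\mathrm{Re}(f) \leq \mathrm{Re}(f)(c) - \eta$ on $S \setminus U$, so the contribution from $S\setminus U$ is bounded by a constant times $e^{r(\mathrm{Re}(f)(c) - \eta)}$, which is exponentially smaller than the target $r^{-n/2} e^{r f(c)}$ and hence negligible.

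For the contribution over $U$ I would Taylor-expand $f(z) = f(c) + \tfrac{1}{2}(z-c)^T H (z-c) + O(|z-c|^3)$ with $H = \mathrm{Hess}(f)(c)$. Condition (2) forces the real quadratic form $v \mapsto \mathrm{Re}\bigl(v^T H v\bigr)$ to be negative semi-definite on the tangent space of $S$ at $c$, and the non-singularity of $H$ from (3) upgrades this to negative definiteness on that tangent space. Thus $S$ already serves as a descent contour for $e^{\frac{r}{2}(z-c)^T H (z-c)}$ near $c$, and no deformation of $S$ is needed. A complex linear change of coordinates $z - c = Tw$ identifies the tangent space of $S$ at $c$ with $\mathbb{R}^n$ and simultaneously brings the quadratic part to $-\sum_j w_j^2$, with Jacobian proportional to $1/\sqrt{-\det H}$. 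Replacing $g(z)$ by $g(c)$ (using continuity and $g(c)\ne 0$ from (4)) and dropping cubic-and-higher Taylor terms yields the Gaussian model
\begin{equation*}
\frac{g(c)}{\sqrt{-\det \mathrm{Hess}(f)(c)}} \int_{\mathbb{R}^n} e^{-\frac{r}{2}\sum_j w_j^2} \, dw \;=\; \frac{g(c)}{\sqrt{-\det \mathrm{Hess}(f)(c)}} \Bigl(\frac{2\pi}{r}\Bigr)^{n/2},
\end{equation*}
and standard Gaussian moment estimates bound the cubic remainder of $f$ and the linear Taylor remainder of $g$ by $O(1/r)$ times the leading term.

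The main technical issue I expect is pinning down the correct branch of $\sqrt{-\det \mathrm{Hess}(f)(c)}$: the sign is determined by the orientation of $S$ together with the diagonalizing matrix $T$, and one must verify consistency so that the final formula is unambiguous. A second subtlety is uniformity of the $O(1/r)$ error as $r\to\infty$; this is handled by choosing $U$ once and for all (independently of $r$) and controlling the Taylor remainders by constants depending only on $U$ and on the uniform bound for $\upsilon_r$ from (5).
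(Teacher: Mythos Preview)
Your overall architecture (absorb the $\upsilon_r/r^2$ perturbation, localize near $c$, reduce to a Gaussian) matches the paper's, but there is a genuine gap in the reduction step.

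The problematic claim is that ``the non-singularity of $H$ from (3) upgrades [negative semi-definiteness of $\mathrm{Re}(v^T H v)$ on $T_cS$] to negative definiteness on that tangent space,'' and consequently that a single complex linear change $z-c = Tw$ can \emph{simultaneously} send $T_cS$ to $\mathbb R^n$ and bring the quadratic part to $-\sum w_j^2$. Neither statement holds in general. For a concrete obstruction take $n=2$, $f(z_1,z_2) = -z_1^2 + i z_2^2 - z_2^4$, critical point at the origin, and $S$ the real square $[-\epsilon,\epsilon]^2$. Then $\mathrm{Re}\,f(x_1,x_2) = -x_1^2 - x_2^4 < 0$ off the origin, so condition (2) is satisfied, and $\mathrm{Hess}(f)(0) = \mathrm{diag}(-2,2i)$ is non-singular; yet $\mathrm{Re}(v^T H v) = -2v_1^2$ is only semi-definite on $T_0S = \mathbb R^2$. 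Hence $S$ is \emph{not} a quadratic descent contour, and any complex linear $T$ diagonalizing $H$ (which requires $t_2^2 \cdot 2i = -1$) necessarily rotates the $z_2$-axis off the reals, so $T^{-1}(T_0S)\neq \mathbb R^n$. Your Gaussian model integral then fails to converge (or gives the wrong Jacobian).

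What is missing is exactly the contour deformation the paper performs. The paper applies the holomorphic Morse Lemma to get coordinates $Z$ in which $f = f(c) - \sum Z_j^2$ \emph{exactly}, and then uses analyticity of the integrand to homotope $\psi^{-1}(S\cap U)$ down to the real cube $[-\epsilon,\epsilon]^n$; the ``side'' of this homotopy lies over $\partial(S\cap U)$, where $\mathrm{Re}(f) \leq \mathrm{Re}(f)(c) - \delta$ already, so it contributes only $O(e^{r(\mathrm{Re} f(c)-\delta)})$. Only after this deformation does one land on a genuine descent contour where the Gaussian estimate (your last display) applies, with Jacobian $\det D\psi(0) = 2^{n/2}/\sqrt{-\det\mathrm{Hess}(f)(c)}$ supplied by the Morse Lemma. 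A smaller point: your first display $\int_S g e^{rf_r} = (\int_S g e^{rf})(1+O(1/r))$ is not literally a consequence of $e^{\upsilon_r/r} = 1 + O(1/r)$ pointwise, since there could be cancellation in $\int_S g e^{rf}$; the paper handles this by bounding $\int_S g\,\sigma_r\,e^{rf}$ separately against the Gaussian integral $\int e^{-r\sum Z_j^2}$, which is safe once one is on the real cube.
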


To apply Proposition \ref{saddle}, we need the following Lemma \ref{LA}. Recall from Section \ref{psf} that for $\delta \geqslant 0,$ the region

$$D_{\mathbb C,  \delta}=\Big\{(x,y)\in \mathbb C^2\ \Big|\  \delta<\mathrm{Re}(y)+\mathrm{Re}(x)<\frac{\pi}{2}- \delta,  \delta < \mathrm{Re}(y)-\mathrm{Re}(x)< \frac{\pi}{2}- \delta\Big\},$$
and for $(x,y)$ in $D_{\mathbb C,  \delta}$ the functions
\begin{equation*}
\begin{split}
V_r^\pm(x,y)=&V_r(s^\pm,m^\pm,0)-4\pi m^\pm x\\
=&\frac{-px^2\pm2\pi x}{q}-2\pi x+4xy-\varphi_r\Big(\pi-y-x-\frac{\pi}{r}\Big)+\varphi_r\Big(y-x+\frac{\pi}{r}\Big)+K(s^\pm)\pi^2,
\end{split}
\end{equation*}
 and 
 $$V^\pm(x,y)=\frac{-px^2\pm2\pi x}{q}-2\pi x+4xy-\mathrm{Li}_2\big(e^{-2\sqrt{-1}(y+x)}\big)+\mathrm{Li}_2\big(e^{-2\sqrt{-1}(y-x)}\big)+K(s^\pm)\pi^2.$$ 
 
\begin{lemma}\label{LA}
In $\big\{(x,y)\in \overline{D_{\mathbb C,\delta}}\ \big|\ |\mathrm{Im}x| < L, |\mathrm{Im}x| < L\}$ for $\delta>0$ and  $L>0,$ 
$$V^\pm_r(x,y)=V^\pm(x,y)-\frac{2\pi\sqrt{-1}\big(\log\big(1-e^{-2\sqrt{-1}(y+x)}\big)+\log\big(1-e^{2\sqrt{-1}(y-x)}\big)\big)}{r}+\frac{\upsilon_r(x,y)}{r^2}$$
with $|\upsilon_r(x,y)|$ bounded from above by a constant independent of $r.$
\end{lemma}

\begin{proof}
Expanding in $\frac{1}{r},$ we have
$$\varphi_r\Big(\pi-x-y-\frac{\pi}{r}\Big)=\varphi_r(\pi-x-y)-\varphi'_r(\pi-x-y)\cdot\frac{\pi}{r}+O\Big(\frac{1}{r^2}\Big)$$
and
$$\varphi_r\Big(y-x+\frac{\pi}{r}\Big)=\varphi_r(y-x)+\varphi'_r(y-x)\cdot\frac{\pi}{r}+O\Big(\frac{1}{r^2}\Big).$$
They by Lemma \ref{converge} (1) we have
\begin{equation*}
\begin{split}
&-\varphi_r(\pi-x-y)+\varphi_r(y-x)\\
=&-\mathrm{Li}(e^{-2\sqrt{-1}(y+x)})+\mathrm{Li}(e^{2\sqrt{-1}(y-x)})+\bigg(-\frac{2\pi^2e^{-2\sqrt{-1}(y+x)}}{3(1-e^{-2\sqrt{-1}(y+x)})}+\frac{2\pi^2e^{2\sqrt{-1}(y-x)}}{3(1-e^{2\sqrt{-1}(y-x)})}\bigg)\frac{1}{r^2}+O\Big(\frac{1}{r^4}\Big),
\end{split}
\end{equation*}
and by Lemma \ref{converge} (2) we have
\begin{equation*}
\begin{split}
&\varphi'_r(\pi-x-y)\cdot\frac{\pi}{r}+\varphi'_r(y-x)\cdot\frac{\pi}{r}\\
=&-\frac{2\pi\sqrt{-1}}{r}\log\big(1-e^{-2\sqrt{-1}(y+x)}\big)-\frac{2\pi\sqrt{-1}}{r}\log\big(1-e^{2\sqrt{-1}(y-x)}\big)+O\Big(\frac{1}{r^3}\Big).
\end{split}
\end{equation*}
The results then follows with 
$$\upsilon_r(x,y)=-\frac{2\pi^2e^{-2\sqrt{-1}(y+x)}}{3(1-e^{-2\sqrt{-1}(y+x)})}+\frac{2\pi^2e^{2\sqrt{-1}(y-x)}}{3(1-e^{2\sqrt{-1}(y-x)})}+O\Big(\frac{1}{r}\Big).$$
\end{proof}

Let $(x_0, y_0)$ be the unique critical point of $V^+$ in $D_{\mathbb C},$ and by Corollary \ref{c} $(-x_0, y_0)$ is the unique critical point of $V^-$ in $D_{\mathbb C}.$ Let $\delta$ be as in Proposition \ref{bound}, and as drawn in Figure \ref{surface} let $S^+= S^+_{\text{top}}\cup S^+_{\text{side}}\cup(D_{\frac{\delta}{2}}\setminus D_{\delta})$ be the union of $D_{\frac{\delta}{2}}\setminus D_{\delta}$ with the two surfaces 
$$S^+_{\text{top}}=\big\{(x,y)\in D_{\mathbb C,\delta}\ \big|\ (\mathrm{Im}(x),\mathrm{Im}(y))=(\mathrm{Im}(x_0),\mathrm{Im}(y_0))\big\}$$ and 
$$S^+_{\text{side}}=\big\{ (\theta_1+\sqrt{-1}t\mathrm{Im}(x_0), \theta_2+\sqrt{-1}t \mathrm{Im}(y_0)) \ \big|\ (\theta_1,\theta_2)\in \partial D_{\delta}, t\in[0,1]\big\};$$
and let  $S^-= S^-_{\text{top}}\cup S^-_{\text{side}}\cup(D_{\frac{\delta}{2}}\setminus D_{\delta})$ be the union of $D_{\frac{\delta}{2}}\setminus D_{\delta}$ with the two surfaces 
$$S^-_{\text{top}}=\big\{(x,y)\in D_{\mathbb C,\delta}\ \big|\ (\mathrm{Im}(x),\mathrm{Im}(y))=(-\mathrm{Im}(x_0),\mathrm{Im}(y_0))\big\}$$ and 
$$S^-_{\text{side}}=\big\{ (\theta_1-\sqrt{-1}t\mathrm{Im}(x_0), \theta_2+\sqrt{-1}t\mathrm{Im}(y_0)) \ \big|\ (\theta_1,\theta_2)\in \partial D_{\delta}, t\in[0,1]\big\}.$$ 
Then $S^\pm$ are homotopic to $D_{\frac{\delta}{2}};$ and since $\delta$ is sufficiently small, $S^\pm$ respectively contain $(\pm x_0,y_0).$

\begin{figure}[htbp]
\centering
\includegraphics[scale=0.3]{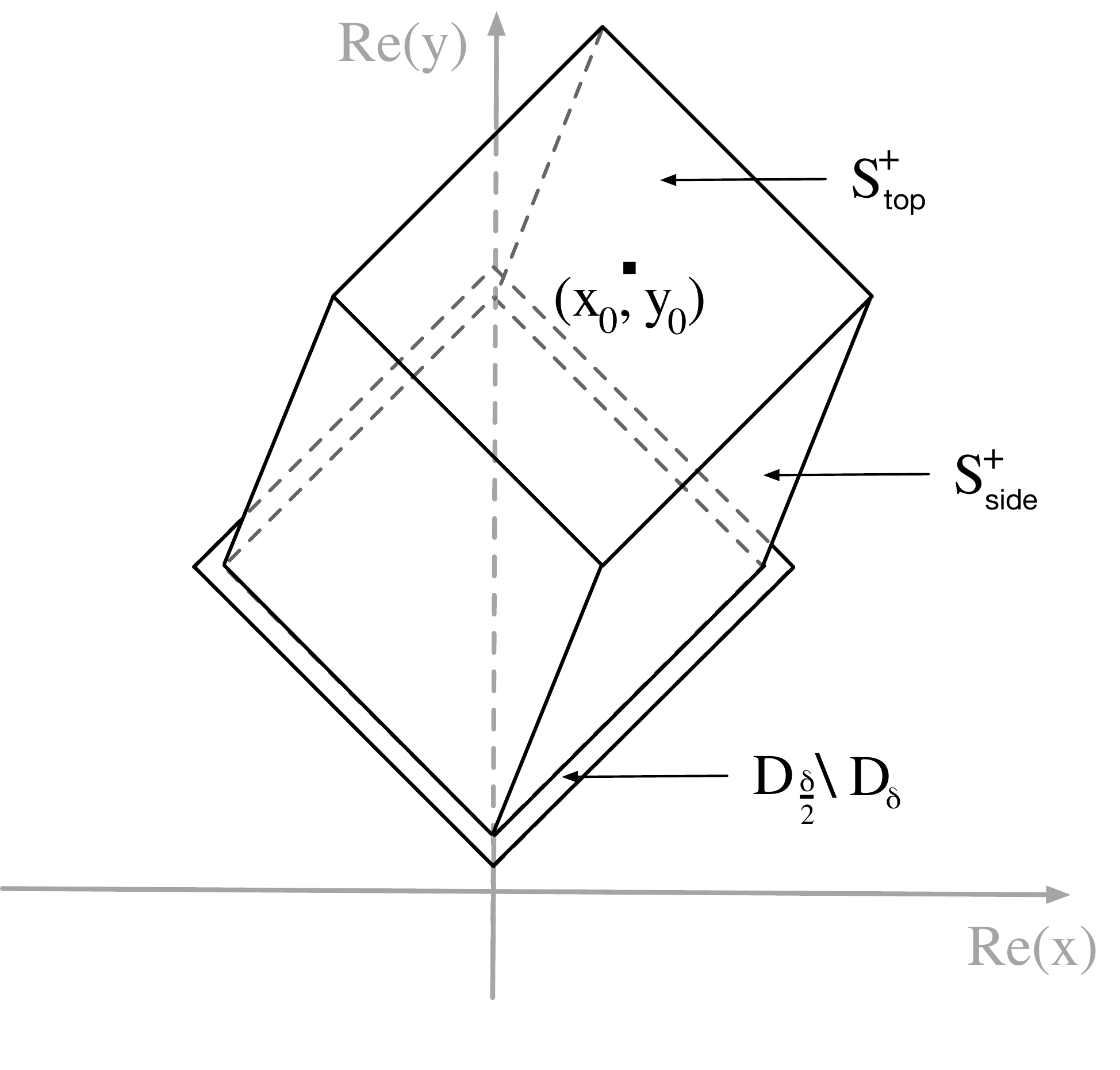}
\caption{The deformed surface $S^+$}
\label{surface} 
\end{figure}

\begin{proposition}\label{max}
$\mathrm{Im}V^+$ achieves the only absolute maximum at $(x_0,y_0)$ on $S^+,$  and $\mathrm{Im}V^-$ achieves the only absolute maximum at $(-x_0,y_0)$ on $S^-.$
\end{proposition}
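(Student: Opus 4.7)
The plan is to decompose $S^+=S^+_{\text{top}}\cup S^+_{\text{side}}\cup(D_{\frac{\delta}{2}}\setminus D_\delta)$ and to bound $\mathrm{Im}V^+$ on each piece separately: Lemma \ref{convexity} will handle the first two pieces, and a direct expression in terms of the Lobachevsky function will handle the third. The decisive comparison is that the supremum on the real annular piece is essentially $\tfrac{1}{2}\mathrm{Vol}(\mathrm S^3\setminus K_{4_1})$, which is strictly less than $\mathrm{Vol}(M)=\mathrm{Im}V^+(x_0,y_0)$ by Proposition \ref{small}. Throughout, $\delta$ is chosen small enough so that $(\mathrm{Re}(x_0),\mathrm{Re}(y_0))\in D_\delta$, which is possible because $(x_0,y_0)$ lies in the interior of $D_{\mathbb C}$.

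On $S^+_{\text{top}}$, where $(\mathrm{Im}(x),\mathrm{Im}(y))$ is frozen at $(\mathrm{Im}(x_0),\mathrm{Im}(y_0))$, Lemma \ref{convexity} gives strict concavity of $\mathrm{Im}V^+$ in $(\mathrm{Re}(x),\mathrm{Re}(y))$. Since Corollary \ref{c} makes $(x_0,y_0)$ a critical point of the holomorphic function $V^+$, the Cauchy--Riemann equations force $(x_0,y_0)$ to be a critical point of $\mathrm{Im}V^+|_{S^+_{\text{top}}}$, and strict concavity then makes it the unique absolute maximum there. On $S^+_{\text{side}}$, each line segment $t\mapsto(\theta_1+it\mathrm{Im}(x_0),\theta_2+it\mathrm{Im}(y_0))$ with $(\theta_1,\theta_2)\in\partial D_\delta$ has fixed real and linearly moving imaginary parts, so by the ``concave up'' half of Lemma \ref{convexity} the restriction of $\mathrm{Im}V^+$ is strictly convex in $t$; its maximum is therefore attained only at $t=0$ or $t=1$. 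The $t=1$ endpoints lie on the boundary of $S^+_{\text{top}}$ and by the previous step give values strictly less than $\mathrm{Im}V^+(x_0,y_0)$, while the $t=0$ endpoints lie in $\partial D_\delta\subset\overline{D_{\frac{\delta}{2}}\setminus D_\delta}$, reducing this case to the third piece.

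On the real annulus $D_{\frac{\delta}{2}}\setminus D_\delta$, the identity $\mathrm{Li}_2(e^{2i\theta})=\frac{\pi^2}{6}+\theta(\theta-\pi)+2i\Lambda(\theta)$ combined with $\mathrm{Li}_2(e^{-2i\theta})=\overline{\mathrm{Li}_2(e^{2i\theta})}$ gives, for real $(x,y)\in D$,
$$\mathrm{Im}V^+(x,y)=2\Lambda(y+x)+2\Lambda(y-x),$$
since all polynomial terms in $V^+$ are real on $\mathbb R^2$. In $D_{\frac{\delta}{2}}\setminus D_\delta$, at least one of $y\pm x$ lies in $(\delta/2,\delta)\cup(\pi/2-\delta,\pi/2-\delta/2)$, where $\Lambda$ is small (since $\Lambda(0)=\Lambda(\pi/2)=0$ and $\Lambda$ is continuous), while the other is bounded above by the global maximum $\Lambda(\pi/6)=\tfrac{1}{4}\mathrm{Vol}(\mathrm S^3\setminus K_{4_1})$. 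Hence for sufficiently small $\delta$,
$$\sup_{D_{\frac{\delta}{2}}\setminus D_\delta}\mathrm{Im}V^+<\mathrm{Vol}(M)=\mathrm{Im}V^+(x_0,y_0),$$
where the strict inequality comes from Proposition \ref{small} (which strictly separates $\mathrm{Vol}(M)$ from $\tfrac{1}{2}\mathrm{Vol}(\mathrm S^3\setminus K_{4_1})$) and the equality from Proposition \ref{Vol}(1). Combining the three steps yields the claim for $S^+$; the symmetry $V^-(x,y)=V^+(-x,y)$ from (\ref{equal}), which maps $S^+$ to $S^-$ under $(x,y)\mapsto(-x,y)$, transfers the whole argument to $S^-$. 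The main obstacle is this third step, where the tolerance in the decomposition is calibrated precisely so that Proposition \ref{small} just suffices to beat the real-annulus contribution.
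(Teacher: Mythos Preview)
Your proof is correct and follows essentially the same three-piece decomposition as the paper: strict concavity on $S^+_{\text{top}}$ via Lemma \ref{convexity}, convexity along the vertical segments of $S^+_{\text{side}}$ reducing to the endpoints, and the Lobachevsky bound on the real annulus combined with Proposition \ref{small}. The only cosmetic differences are that you compute $\mathrm{Im}V^+=2\Lambda(y+x)+2\Lambda(y-x)$ directly rather than invoking Proposition \ref{bound}, and you transfer to $S^-$ via the symmetry (\ref{equal}) rather than repeating the argument; both are mild streamlinings of the paper's presentation.
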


\begin{proof} First, by Proposition \ref{bound} and Proposition \ref{small}, on $D_{\frac{\delta}{2}}\setminus D_{\delta},$ $\mathrm{Im}V(x,y)\leqslant \frac{1}{2}\mathrm{Vol}(\mathrm S^3\setminus K_{4_1})+\epsilon<\mathrm{Vol(M)}=\mathrm{Im}V^+( x_0,y_0).$

By Lemma \ref{convexity}, $\mathrm{Im}V^+$  is concave down on $S^{+}_{\text{top}}.$ Since $( x_0, y_0)$ is the critical point of $\mathrm{Im}V^{+},$ it is the only absolute maximum on $S^{+}_{\text{top}}.$

On the side $S^{+}_{\text{side}},$ for each $(\theta_1,\theta_2)\in \partial D_{\delta}$ consider the function $$g^{+}_{(\theta_1,\theta_2)}(t)\doteq \mathrm{Im}V^{+}(\theta_1+\sqrt{-1}t\mathrm{Im}(x_0), \theta_2+\sqrt{-1}t\mathrm{Im}(y_0))$$
on $[0,1].$
We show that $g^{+}_{(\theta_1,\theta_2)}(t)<\mathrm{Im}V^{+}(x_0,y_0)$ for each  $(\theta_1,\theta_2)\in \partial D_{\delta}$ and $t\in[0,1].$
Indeed, since $(\theta_1,\theta_2)\in\partial D_{\delta},$ $g^{+}_{(\theta_1,\theta_2)}(0)=\mathrm{Im}V^{+}(\theta_1,\theta_2)<\frac{1}{2}\mathrm{Vol}(\mathrm S^3\setminus K_{4_1})+\epsilon<\mathrm{Vol}(M)=\mathrm{Im}V^{+}(x_0,y_0);$ and since $(\theta_1+\sqrt{-1}\mathrm{Im}(x_0),\theta_2+\sqrt{-1}\mathrm{Im}(y_0))\in S^{+}_{\text{top}},$ by the previous step $g^{+}_{(\theta_1,\theta_2)}(1) = \mathrm{Im}V^{+}(\theta_1+\sqrt{-1}\mathrm{Im}(x_0),\theta_2+\sqrt{-1}\mathrm{Im}(y_0))<\mathrm{Im}V^{+}(x_0,y_0).$
Now by Lemma \ref{convexity}, $g^{+}_{(\theta_1,\theta_2)}$ is concave up, hence
$$g^{+}_{(\theta_1,\theta_2)}(t)\leqslant \max\Big\{g^{+}_{(\theta_1,\theta_2)}(0), g^{+}_{(\theta_1,\theta_2)}(1)\Big\}<\mathrm{Im}V^{+}( x_0,y_0).$$ 

Putting all these together, we have $\mathrm{Im}V^+(x,y)\leqslant \mathrm{Im}V^+( x_0,y_0)$ on $S^+.$
\\

The other case is similar. By Lemma \ref{convexity}, $\mathrm{Im}V^{-}$  is concave down on $S^{-}_{\text{top}}.$ Since $(- x_0, y_0)$ is the critical point of $\mathrm{Im}V^{-},$ it is the only absolute maximum on $S^{-}_{\text{top}}.$

On the side $S^{-}_{\text{side}},$ for each $(\theta_1,\theta_2)\in \partial D_{\delta}$ consider the function $$g^{-}_{(\theta_1,\theta_2)}(t)\doteq \mathrm{Im}V^{-}(\theta_1-\sqrt{-1}t\mathrm{Im}(x_0), \theta_2+\sqrt{-1}t\mathrm{Im}(y_0))$$
on $[0,1].$
We show that $g^{-}_{(\theta_1,\theta_2)}(t)<\mathrm{Im}V^{-}(- x_0,y_0)$ for each  $(\theta_1,\theta_2)\in \partial D_{\delta}$ and $t\in[0,1].$
Indeed, since $(\theta_1,\theta_2)\in\partial D_{\delta},$ $g^{-}_{(\theta_1,\theta_2)}(0)=\mathrm{Im}V^{-}(\theta_1,\theta_2)<\frac{1}{2}\mathrm{Vol}(\mathrm S^3\setminus K_{4_1})+\epsilon<\mathrm{Vol}(M)=\mathrm{Im}V^{-}(-x_0,y_0);$ and since $(\theta_1-\sqrt{-1}\mathrm{Im}(x_0),\theta_2+\sqrt{-1}\mathrm{Im}(y_0))\in S^{-}_{\text{top}},$ by the previous step $g^{-}_{(\theta_1,\theta_2)}(1) = \mathrm{Im}V^{-}(\theta_1-\sqrt{-1}\mathrm{Im}(x_0),\theta_2+\sqrt{-1}\mathrm{Im}(y_0))<\mathrm{Im}V^{\pm}(- x_0,y_0).$
Now by Lemma \ref{convexity}, $g^{-}_{(\theta_1,\theta_2)}$ is concave up, hence
$$g^{-}_{(\theta_1,\theta_2)}(t)\leqslant \max\Big\{g^{-}_{(\theta_1,\theta_2)}(0), g^{-}_{(\theta_1,\theta_2)}(1)\Big\}<\mathrm{Im}V^{-}(\pm x_0,y_0).$$ 

Putting all these together, we have $\mathrm{Im}V^-(x,y)\leqslant \mathrm{Im}V^-( x_0,y_0)$ on $S^-.$

\end{proof}

\begin{proposition}\label{leading}
 We have the following estimations.
\begin{enumerate}[(1)]
\item \begin{equation*}
\begin{split}
\int_{D_{\frac{\delta}{2}}}\psi(x,y)\sin\bigg(\frac{x}{q}-J(s^+)\pi\bigg)&\epsilon(x,y)e^{-x\sqrt{-1}+\frac{r}{4\pi\sqrt{-1}}\big(V_r(s^+,x,y)-4\pi m^+x\big)}dxdy\\
=&\frac{c^+_r}{\sqrt{\mathrm{Tor}(M;\mathrm{Ad}_\rho})}e^{\frac{r}{4\pi}\big(\mathrm{Vol}(M)+\sqrt{-1}\mathrm{CS}(M)\big)}\Big(1+O\Big(\frac{1}{r}\Big)\Big),
\end{split}
\end{equation*}
where $$c_r^+=-\frac{2\pi^2\sqrt{q}}{r}(-1)^{J(s^+)-\frac{p'}{q}}e^{-\frac{\pi\sqrt{-1}r}{4}\big(K(s^+)+\frac{p'}{q}\big)}.$$

\item  \begin{equation*}
\begin{split}
\int_{D_{\frac{\delta}{2}}}\psi(x,y)\sin\bigg(\frac{x}{q}-J(s^-)\pi\bigg)&\epsilon(x,y)e^{-x\sqrt{-1}+\frac{r}{4\pi\sqrt{-1}}\big(V_r(s^-,x,y)-4\pi m^-x\big)}dxdy\\
=&\frac{c^-_r}{\sqrt{\mathrm{Tor}(M;\mathrm{Ad}_\rho})}e^{\frac{r}{4\pi}\big(\mathrm{Vol}(M)+\sqrt{-1}\mathrm{CS}(M)\big)}\Big(1+O\Big(\frac{1}{r}\Big)\Big),
\end{split}
\end{equation*}
where $$c_r^-=\frac{2\pi^2\sqrt{q}}{r}(-1)^{J(s^+)-\frac{p'}{q}}e^{-\frac{\pi\sqrt{-1}r}{4}\big(K(s^-)+\frac{p'}{q}\big)}.$$
\end{enumerate}
\end{proposition}

\begin{proof}  By Lemma \ref{LA},
\begin{equation*}
\begin{split}
&e^{-x\sqrt{-1}+\frac{r}{4\pi\sqrt{-1}}\big(V_r(s^\pm,x,y)-4\pi m^\pm x\big)}\\
=&e^{-x\sqrt{-1}+\frac{r}{4\pi\sqrt{-1}}V_r^\pm(x,y)}\\
=&e^{-x\sqrt{-1}-\frac{\log\big(1-e^{-2\sqrt{-1}(y+x)}\big)}{2}-\frac{\log\big(1-e^{2\sqrt{-1}(y-x)}\big)}{2}+\frac{r}{4\pi\sqrt{-1}}\Big(V^\pm(x,y)+\frac{\upsilon_r(x,y)}{r^2}\Big)}.
\end{split}
\end{equation*}

 By analyticity, the integrals remain the same if we deform the domains from $D_{\frac{\delta}{2}}$ to $S^\pm.$ 

Now we apply Proposition \ref{saddle}, with the region 
$$D=D_L\doteq \big\{ (x,y)\in D_{\mathbb C,\frac{\delta}{2}}\ \big | |\mathrm{Im}x|<L, |\mathrm{Im}y|<L\big\}$$
 for a sufficiently large $L,$ the embedded disk $S=S^\pm,$ the functions $f(x,y)=\frac{1}{4\pi \sqrt{-1}}V^\pm(x,y),$ $f_r(x,y)=\frac{1}{4\pi \sqrt{-1}}\big(V^\pm(x,y)+\frac{\upsilon_r(x,y)}{r^2}\big),$
$$g(x,y)=\psi(x,y)\sin\bigg(\frac{x}{q}-J(s^\pm)\pi\bigg)\epsilon(x,y)e^{-x\sqrt{-1}-\frac{\log\big(1-e^{-2\sqrt{-1}(y+x)}\big)}{2}-\frac{\log\big(1-e^{2\sqrt{-1}(y-x)}\big)}{2}}$$
and  the point $(c_1,c_2)=(\pm x_0,y_0).$

Then by Corollary \ref{c}, $(\pm x_0,y_0)$ are respectively the critical points of $V^\pm,$ and hence of $f,$ and (1) of Proposition \ref{saddle} is satisfied.  By Proposition \ref{max}, $\mathrm{Re}(f)=\frac{1}{4\pi}\mathrm{Im}V^\pm$ achieves the only absolute maximum on $S^\pm$ at $(\pm x_0,y_0),$ and (2) of Proposition \ref{saddle} is satisfied.

To verify (3) of Proposition \ref{saddle}, we for each $(\theta_1,\theta_2)\in D_{\frac{\delta}{2}}$ let 
$$P_{(\theta_1,\theta_2)}=\big\{(x,y)\in D_L\ \big|\ \mathrm{Re}(x)=\theta_1, \mathrm {Re}(y)=\theta_2\big\}$$
and let 
$$D^\pm_{(\theta_1,\theta_2)}=\big\{(x,y)\in P_{(\theta_1,\theta_2)}\ \big|\ \mathrm{Im}V^\pm(x,y)<\mathrm{Im}V^\pm(\pm x_0,y_0)\big\}.$$
Then we claim that 
$$D^+_{(\mathrm{Re}(x_0),\mathrm{Re}(y_0))}=D^-_{(\mathrm{Re}(-x_0),\mathrm{Re}(y_0))}=\emptyset,$$
 $D^+_{(\theta_1,\theta_2)}$ is homeomorphic a disk for $(\theta_1,\theta_2)\neq (\mathrm{Re}(x_0),\mathrm{Re}(y_0)),$ and  $D^-_{(\theta_1,\theta_2)}$ is homeomorphic to a disk for $(\theta_1,\theta_2)\neq (\mathrm{Re}(-x_0),\mathrm{Re}(y_0)),$ from which we conclude that the domains  
$$\big\{(x,y)\in D_L\ \big|\ \mathrm{Im}V^\pm(x,y)< \mathrm{Im}V^\pm(\pm x_0,y_0)\big\}$$
respectively deformation retract to $S^\pm\setminus (\pm x_0,y_0)$ by shrinking each $D^\pm_{(\theta_1,\theta_2)}$ respectively to $\{(\theta_1\pm\sqrt{-1}\mathrm{Im}(x_0),\theta_2+\sqrt{-1}\mathrm{Im}(y_0)\},$ verifying (3) of Proposition \ref{saddle}.

To prove the claim, we by Lemma \ref{convexity} have that on $P_{ (\mathrm{Re}(\pm x_0),\mathrm{Re}(y_0))},$ $\mathrm{Im}V^\pm$ respectively achieve the absolute minimum at $(\pm x_0,y_0),$ hence $D^\pm_{ (\mathrm{Re}(\pm x_0),\mathrm{Re}(y_0))}=\emptyset.$ For $(\theta_1,\theta_2)\neq (\mathrm{Re}(\pm x_0),\mathrm{Re}(y_0)),$ we have
$$\min_{P_{(\theta_1,\theta_2)}}\mathrm{Im} V^\pm\leqslant \mathrm{Im} V^\pm (\theta_1\pm \sqrt{-1}\mathrm {Im}(x_0), \theta_2+\sqrt{-1}\mathrm{Im}(y_0))< \mathrm{Im}V^\pm (\pm x_0,y_0),$$
where the last inequality comes from that both $(\theta_1\pm \sqrt{-1}\mathrm {Im}(x_0), \theta_2+\sqrt{-1}\mathrm{Im}(y_0))$ and $(\pm x_0,y_0)$ are on $S^\pm_{\text{top}}$ and $\mathrm{Im}V^\pm$ achieve the only  maximum at $(\pm x_0,y_0).$ Then by Lemma \ref{convexity} that $\mathrm{Im}V^\pm$ is concave up on $P_ {(\theta_1,\theta_2)},$ $D_{(\theta_1,\theta_2)}$ is a convex subset of $P_ {(\theta_1,\theta_2)},$ which is homeomorphic to a disk. This proves the claim, and verifies (3) of Proposition \ref{saddle}.

By Proposition \ref{Vol} (2), $\det(\mathrm{Hess}f)(c_1,c_2)=-\frac{1}{16\pi^2}\det(\mathrm{Hess}V^\pm)(\pm x_0,y_0)\neq 0,$  and (4) of Proposition \ref{saddle} is satisfied. At the critical points $(\pm x_0,y_0),$ by the first equation of the system of critical equations (\ref{C}), we have
$$\mp x_0\sqrt{-1}-\frac{\log\big(1-e^{-2\sqrt{-1}(y_0\pm x_0)}\big)}{2}-\frac{\log\big(1-e^{2\sqrt{-1}(y_0\mp x_0)}\big)}{2}=0.$$ Together with (\ref{=sinh}) and (\ref{sin=}), we have
$$g(c_1,c_2)=2\sin\bigg(\frac{\pm x_0}{q}-J(s^\pm)\pi\bigg)=\pm(-1)^{J(s^+)-\frac{p'}{q}}2\sqrt{-1}\sinh\frac{\mathrm H(\gamma)}{2}\neq 0,
$$
where the inequality comes from that $\mathrm{Re}(\mathrm H(\gamma))$ is the length of the core curve $\gamma$ of the filled solid tori, which is non-zero. Hence (5) of Proposition \ref{saddle} is satisfied. (6) of Proposition \ref{saddle} follows from Lemma \ref{LA}. Therefore, all the conditions of Proposition \ref{saddle} are satisfied.

By Proposition \ref{Vol} (1), the critical values $$f(c_1,c_2)=\frac{V^\pm(\pm x_0,y_0)}{4\pi\sqrt{-1}}=\frac{1}{4\pi\sqrt{-1}}\bigg(\sqrt{-1}\big(\mathrm{Vol}(M)+\sqrt{-1}\mathrm{CS}(M)\big)+\Big(K(s^\pm)+\frac{p'}{q}\Big)\pi^2\bigg),$$ 
and by Proposition \ref{Vol} (3),
$$\frac{g(c_1,c_2)}{\sqrt{-\det\mathrm{Hess}(f)(c_1,c_2)}}=\frac{2\sin\Big(\frac{\pm x_0}{q}-J(s^\pm)\pi\Big)}{\sqrt{\frac{1}{16\pi^2}\det\mathrm{Hess}(V^\pm)(\pm x_0,y_0)}}=\mp\frac{(-1)^{J(s^+)-\frac{p'}{q}}\pi\sqrt{q}}{\sqrt{\mathrm{Tor}(M;\mathrm{Ad}_\rho})},$$
from which the result follows.
\end{proof}

%%%%%%%%%%%%%%%%%%%%%%%%%%%%%%%%%%%%%%%%%%%%%%%%%%%%%%%%%%%%%%%%%%%%%%%%%%%%%%%%%%%%%%%%%%%%%%%%%%%%%%%%%%%%%%%%%%%%%%%%%%%%%%%%%%%%%%%%%%%%%%%%%%%%%%%%%%%%%%%%%%%%%%%%%%%%%%%%%%%%%%%%

\subsection{Estimate of other Fourier coefficients}\label{estother}

For $s\in\{0,\dots,|q|-1\}$ and $(x,y)\in \mathcal D_\mathbb{C}=D_{\mathbb C}\cup D'_{\mathbb C}\cup D''_{\mathbb C},$
 let
$$V(s,x,y)=\frac{-px^2}{q}+I(s)\frac{2\pi x}{q}+4xy-\mathrm{Li}_2\big(e^{-2\sqrt{-1}(y+x)}\big)+\mathrm{Li}_2\big(e^{-2\sqrt{-1}(y-x)}\big)+K(s)\pi^2.$$
Then similar to Lemma \ref{LA}, we have 
\begin{lemma}\label{LA'}
In $\big\{(x,y)\in \overline{D_{\mathbb C,\delta}}\cup \overline{D'_{\mathbb C,\delta}}\cup \overline{D''_{\mathbb C,\delta}}\ \big|\ |\mathrm{Im}x| < L, |\mathrm{Im}y| < L\}$ for $\delta>0$ and  $L>0,$ 
$$V_r(s,x,y)=V(s,x,y)-\frac{2\pi\sqrt{-1}\big(\log\big(1-e^{-2\sqrt{-1}(y+x)}\big)+\log\big(1-e^{2\sqrt{-1}(y-x)}\big)\big)}{r}+\frac{\upsilon_r(x,y)}{r^2}$$
with $|\upsilon_r(x,y)|$ bounded from above by a constant independent of $r.$
\end{lemma}

%%%%%%%%%%%%%%%%%%%%%%%%%%%%%%%%%%%%%%%%%%%%%%%%%%%%%%%%%%%%%%%%%%%%%%%%%%%%%%%%%%%%%%%%%%%%%%%%%%%%%%%%%%%%%%%%%%%%%%%%%%%%%%%%%%%%%%%%%%%%%%%%%%%%%%%%%%%%%%%%%%%%%%%%%%%%%%%%%%%%%%%%

\subsubsection{Estimate on $D_{\frac{\delta}{2}}$}

\begin{proposition}\label{k1} There is an $\epsilon>0$ such that for each triple $(s,m,0)$ with $(s,m)\neq (s^+,m^+)$ and  $(s,m)\neq (s^-,m^-),$
\item \begin{equation*}
\begin{split}
\bigg|\int_{D_{\frac{\delta}{2}}}\psi(x,y)\sin\bigg(\frac{x}{q}-J(s)\pi\bigg)&\epsilon(x,y)e^{-x\sqrt{-1}+\frac{r}{4\pi\sqrt{-1}}\big(V_r(s,x,y)-4\pi mx\big)}dxdy\bigg|\leqslant O\Big(e^{\frac{r}{4\pi}\big(\mathrm{Vol}(M)-\epsilon\big)}\Big).
\end{split}
\end{equation*}
\end{proposition}

\begin{proof}  We recall the surface
$S^\pm= S^\pm_{\text{top}}\cup S^\pm_{\text{side}}\cup(D_{\frac{\delta}{2}}\setminus D_{\delta})$ from Section \ref{estleading}, where
$$S^\pm_{\text{top}}=\big\{(x,y)\in D_{\mathbb C,\delta}\ \big |\ (\mathrm{Im}(x),\mathrm{Im}(y))=(\pm\mathrm{Im}(x_0),\mathrm{Im}(y_0))\big\}$$ and 
$$S^\pm_{\text{side}}=\big\{ (\theta_1\pm\sqrt{-1}t\mathrm{Im}(x_0), \theta_2+\sqrt{-1}t \mathrm{Im}(y_0)) \ \big|\ (\theta_1,\theta_2)\in \partial D_{\delta}, t\in[0,1]\big\},$$
with $(\pm x_0,y_0)$  the critical points of $V^\pm$ given in Corollary \ref{c}.
We will prove that for some $\epsilon>0,$
$$\mathrm{Im}\big(V(s,x,y)-4\pi m x\big)<\mathrm{Vol}(M)-\epsilon$$
 either on $S^+$ or on $S^-.$ 
Then the result follows form Lemma \ref{LA'} and the analyticity of $V_r$ that the domain of integral $D_{\frac{\delta}{2}}$ could be deformed to $S^\pm.$

By Lemma \ref{n0}, $\mathrm{Im}(x_0)\neq 0.$  Without loss of generality we may assume that $\mathrm{Im}(x_0)>0,$ since otherwise we can consider $\mathrm{Im}(-x_0).$

Now for $s\in\{0,\dots, |q|-1\}$ and $m\in\mathbb Z,$  let
$$k^+(s,m)=-\frac{I(s)-1}{q}-1+2m$$
and 
$$k^-(s,m)=-\frac{I(s)+1}{q}-1+2m.$$
Then by a direct computation, 
\begin{equation*}
\begin{split}
V(s,x,y)-4\pi mx&=V^+(x,y)-2\pi k^+(s,m)x+\big(K(s)-K(s^+)\big)\pi^2\\
&=V^-(x,y)-2\pi k^-(s,m)x+\big(K(s)-K(s^-)\big)\pi^2.
\end{split}
\end{equation*}

By Lemma \ref{arith} (1), $(s,m)=(s^+,m^+)$ is the only pair such that $k^+(s,m)=0,$ and $(s,m)=(s^-,m^-)$ is the only pair such that $k^-(s,m)=0.$ By Lemma \ref{arith} (1), elements of 
$$\{k^\pm(s,m)\ |\ s\in\{0,\dots, |q|-1\}\text{ and }m\in \mathbb Z\}$$
 differ by a multiple of $\frac{2}{q};$ and  for each $(s,m),$ $k^+(s,m)-k^-(s,m)=\frac{2}{q}.$ This implies that for each pair $(s,m)$ other than $(s^\pm,m^\pm),$ $k^+(s,m)$ and $k^-(s,m)$ are either both strictly positive, or both strictly negative.

By Proposition \ref{max},  for any $(x,y)\in S^\pm_{\text{top}}$ we respectively have 
$$\mathrm{Im}V^\pm(x,y)\leqslant\mathrm{Im}V^\pm(\pm x_0,y_0)=\mathrm{Vol}(M).$$
 Then for $(s,m)$ with $k^\pm(s,m)>0,$ we have on $S^+_{\text{top}}$ that 
\begin{equation*}
\mathrm{Im}\Big(V(s,x,y)-4\pi m x\Big)=\mathrm{Im}V^+(x,y)-2\pi k^+(s,m)\mathrm{Im}(x_0)<\mathrm{Vol}(M)-\epsilon
\end{equation*}
for some $\epsilon>0;$ and  for $(s,m)$ with $k^\pm(s,m)<0,$ we have on $S^-_{\text{top}}$ that 
\begin{equation*}
\mathrm{Im}\Big(V(s,x,y)-4\pi m x\Big)=\mathrm{Im}V^-(x,y)+2\pi k^-(s,m)\mathrm{Im}(x_0)<\mathrm{Vol}(M)-\epsilon
\end{equation*}
for some $\epsilon>0.$

By Proposition \ref{bound} and Proposition \ref{small}, we have on $D_{\frac{\delta}{2}}\setminus D_{\delta}$ that
 $$\mathrm{Im}\big(V(x,y)-4\pi mx\big)=\mathrm{Im}V(x,y)\leqslant \frac{1}{2}\mathrm{Vol}(\mathrm S^3\setminus K_{4_1})+\epsilon<\mathrm{Vol(M)}-\epsilon.$$

On $S^\pm_{\text{side}},$ we notice  that $V(s,x,y)-4\pi mx$ differs from $V^\pm$ by a linear function. Hence by Lemma \ref{convexity}, for each $(\theta_1, \theta_2)\in\partial D_{\delta}$ the function 
$$g^\pm_{(\theta_1,\theta_2)}(t)\doteq \mathrm{Im}\Big(V\big(s,\theta_1\pm\sqrt{-1}t\mathrm{Im}(x_0),\theta_2+\sqrt{-1}t\mathrm{Im}(y_0)\big)-4\pi m \big(\theta_1\pm\sqrt{-1}t\mathrm{Im}(x_0)\big)\Big)$$
is concave up on $[0,1].$ Therefore, for $(s,m)$ with $k^\pm(s,m)>0,$ we have  on $S^+_{\text{side}}$ that
 $$\mathrm{Im}\Big(V(s,x,y)-4\pi m x\Big)=g^+_{(\theta_1,\theta_2)}(t)\leqslant \max\Big\{g^+_{(\theta_1,\theta_2)}(0), g^+_{(\theta_1,\theta_2)}(1)\Big\}<\mathrm{Vol}(M)-\epsilon;$$
  and  for $(s,m)$ with $k^\pm(s,m)<0,$ we have  on $S^-_{\text{side}}$ that  
  $$\mathrm{Im}\Big(V(s,x,y)-4\pi m x\Big)=g^-_{(\theta_1,\theta_2)}(t)\leqslant \max\Big\{g^-_{(\theta_1,\theta_2)}(0), g^-_{(\theta_1,\theta_2)}(1)\Big\}<\mathrm{Vol}(M)-\epsilon.$$
  
Putting all these together, we have for $(s,m)$ with $k^\pm(s,m)>0,$ $\mathrm{Im}\big(V(s,x,y)-4\pi m x\big)<\mathrm{Vol}(M)-\epsilon$ on $S^+,$ and for $(s,m)$ with $k^\pm(s,m)<0,$ $\mathrm{Im}\big(V(s,x,y)-4\pi m x\big)<\mathrm{Vol}(M)-\epsilon$ on $S^-.$ 
\end{proof}

\begin{proposition}\label{k2}  There is an $\epsilon>0$ such that for each triple $(s,m,n)$ with $n\neq 0,$ 
\begin{equation*}
\begin{split}
\bigg|\int_{D_{\frac{\delta}{2}}}\psi(x,y)\sin\bigg(\frac{x}{q}-J(s)\pi\bigg)&\epsilon(x,y)e^{-x\sqrt{-1}+\frac{r}{4\pi\sqrt{-1}}\big(V_r(s,x,y)-4\pi mx-4\pi ny\big)}dxdy\bigg|\leqslant O\Big(e^{\frac{r}{4\pi}\big(\mathrm{Vol}(M)-\epsilon\big)}\Big).
\end{split}
\end{equation*}
\end{proposition}

\begin{proof} For $L>0,$ we let 
$$S^\pm_L=S^\pm_{L,\text{top}}\cup S^\pm_{L,\text{side}}\cup(D_{\frac{\delta}{2}}\setminus D_{\delta}),$$
where
$$S^\pm_{L, \text{top}}=\big\{(x,y)\in D_{\mathbb C,\delta}\ \big|\ \mathrm {Im} (x)=0, \mathrm {Im} (y)=\pm L \big\}$$
and
$$S^\pm_{L, \text{side}}=\big\{(\theta_1,\theta_2\pm\sqrt{-1}l)\ \big|\ (\theta_1,\theta_2)\in \partial D_\delta, l\in [0,L] \big\}.$$
Then $S^\pm_L$ are  homotopic to  $D_{\frac{\delta}{2}}.$

 We want to show a stronger statement  that: If $L$ is sufficiently large, then there is an $\epsilon>0$ such that for each triple $(s,m,n)$ with $n\neq 0,$  $$\mathrm{Im}\big(V(s,x,y)-4\pi m x-4\pi n y\big)<\mathrm{Vol}(M)-\epsilon$$
either on $S^+_L$ or on $S^-_L.$

Then the result follows form Lemma \ref{LA'} and the analyticity of $V_r.$

To this end, first on $D_{\frac{\delta}{2}}\setminus D_{\delta},$ we by Proposition \ref{bound} and Proposition \ref{small} have 
 $$\mathrm{Im}\big(V(x,y)-4\pi mx-4\pi ny\big)=\mathrm{Im}V(x,y)\leqslant \frac{1}{2}\mathrm{Vol}(\mathrm S^3\setminus K_{4_1})+\epsilon<\mathrm{Vol(M)}-\epsilon.$$

In $D_{\mathbb C},$  we have
$$0<\arg (1-e^{-2\sqrt{-1}(y+x)}) <\pi -2(\mathrm{Re}(y)+\mathrm{Re}(x))$$
and
$$2(\mathrm{Re}(y)-\mathrm{Re}(x))-\pi <\arg(1-e^{2\sqrt{-1}(y-x)})<0.$$

For $n>0,$ let $y=\mathrm{Re}(y)+\sqrt{-1}l.$ Then
\begin{equation*}
\begin{split}
\frac{\partial \mathrm{Im}\big(V(s,x,y)-4\pi m x-4\pi n y\big)}{\partial l}&=4\mathrm{Re}(x)+2\arg (1-e^{-2\sqrt{-1}(y+x)})+2\arg(1-e^{2\sqrt{-1}(y-x)})-4n\pi\\
&<4\mathrm{Re}(x)+2(\pi-2(\mathrm{Re}(y)+\mathrm{Re}(x)))+0-4n\pi\\
&=2\pi -4\mathrm{Re}(y)-4n\pi<-2\pi,
\end{split}
\end{equation*}
where the last inequality comes from that $0<\mathrm{Re}(y)<\frac{\pi}{2}$ and $n>0.$ Therefore, pushing the domain $D_{\delta}$ along the $\sqrt{-1}l$ direction far enough (without changing $\mathrm{Im}(x)$), the imaginary part of $V(s,x,y)-4\pi m x-4\pi n y$ becomes as small as possible. In particular,  
 for a sufficiently large $L,$ there is an $\epsilon>0$ such that
$$V(s,x,y)-4\pi m x-4\pi n y<\mathrm{Vol}(M)-\epsilon$$
on $S^+_{L, \text{top}}.$

Since $\mathrm{Im}\big(V(s,x,y)-4\pi m x-4\pi n y\big)$ is already smaller than the volume of $M$ on $\partial D_{\delta}$ and on $\partial S^+_{L,\text{top}},$  by Lemma \ref{convexity} it becomes even smaller on the side. Ie., 
$$V(s,x,y)-4\pi m x-4\pi n y<\mathrm{Vol}(M)-\epsilon$$
on $S^+_{L, \text{side}}.$

 Putting all together, we have for a sufficiently large $L,$ 
$$\mathrm{Im}\big(V(s,x,y)-4\pi m x-4\pi n y\big)<\mathrm{Vol}(M)-\epsilon$$
on $S^+_L$
for each triple $(s,m,n)$ with $n>0.$

For $n<0,$ let $y=\mathrm{Re}(y)-\sqrt{-1}l.$ Then
\begin{equation*}
\begin{split}
\frac{\partial  \mathrm{Im}\big(V(s,x,y)-4\pi m x-4\pi n y\big)}{\partial l}&=-4\mathrm{Re}(x)-2\arg (1-e^{-2\sqrt{-1}(y+x)})-2\arg(1-e^{2\sqrt{-1}(y-x)})+4n\pi\\
&<-4\mathrm{Re}(x)-0-2(2(\mathrm{Re}(y)-\mathrm{Re}(x))-\pi)+4n\pi\\
&=2\pi -4\mathrm{Re}(y)+4n\pi<-2\pi,
\end{split}
\end{equation*}
where the last inequality comes from that $0<\mathrm{Re}(y)<\frac{\pi}{2}$ again and $n<0.$ Therefore, pushing the domain $D_{\delta}$ along the $-\sqrt{-1}l$ direction far enough (without changing $\mathrm{Im}(x)$), the imaginary part of $V(s,x,y)-4\pi m x-4\pi n y$ becomes as small as possible. In particular,  
 for a sufficiently large $L,$ there is an $\epsilon>0$ such that
$$V(s,x,y)-4\pi m x-4\pi n y<\mathrm{Vol}(M)-\epsilon$$
on $S^-_{L, \text{top}}.$ 

Since $\mathrm{Im}\big(V(s,x,y)-4\pi m x-4\pi n y\big)$ is already smaller than the volume of $M$ on $\partial D_{\delta}$ and on $\partial S^-_{L,\text{top}},$  by Lemma \ref{convexity} it becomes even smaller on the side. Ie., 
$$V(s,x,y)-4\pi m x-4\pi n y<\mathrm{Vol}(M)-\epsilon$$
on $S^-_{L, \text{side}}.$

 Putting all together, we have for a sufficiently large $L,$ 
$$\mathrm{Im}\big(V(s,x,y)-4\pi m x-4\pi n y\big)<\mathrm{Vol}(M)-\epsilon$$
on $S^-_L$
for each triple $(s,m,n)$ with $n<0.$ 
\end{proof}

%%%%%%%%%%%%%%%%%%%%%%%%%%%%%%%%%%%%%%%%%%%%%%%%%%%%%%%%%%%%%%%%%%%%%%%%%%%%%%%%%%%%%%%%%%%%%%%%%%%%%%%%%%%%%%%%%%%%%%%%%%%%%%%%%%%%%%%%%%%%%%%%%%%%%%%%%%%%%%%%%%%%%%%%%%%%%%%%%%%%%%%%

\subsubsection{Estimate on $D_{\frac{\delta}{2}}'$}

\begin{proposition}\label{D'}  There is an $\epsilon>0$ such that  for each triple $(s,m,n),$
\begin{equation*}
\begin{split}
\bigg|\int_{D'_{\frac{\delta}{2}}}\psi(x,y)\sin\bigg(\frac{x}{q}-J(s)\pi\bigg)&\epsilon(x,y)e^{-x\sqrt{-1}+\frac{r}{4\pi\sqrt{-1}}\big(V_r(s,x,y)-4\pi mx-4\pi ny\big)}dxdy\bigg|\leqslant O\Big(e^{\frac{r}{4\pi}\big(\mathrm{Vol}(M)-\epsilon\big)}\Big).
\end{split}
\end{equation*}
\end{proposition}

\begin{proof} For $L>0,$ we let 
$$S'^\pm_L=S'^\pm_{L,\text{top}}\cup S'^\pm_{L,\text{side}}\cup(D'_{\frac{\delta}{2}}\setminus D'_{\delta}),$$
where
$$S'^\pm_{L, \text{top}}=\big\{(x,y)\in D'_{\mathbb C,\delta}\ \big|\ \mathrm {Im} (x)=0, \mathrm {Im} (y)=\pm L \big\}$$
and
$$S'^\pm_{L, \text{side}}=\big\{(\theta_1,\theta_2\pm\sqrt{-1}l)\ \big|\ (\theta_1,\theta_2)\in \partial D'_\delta, l\in [0,L] \big\}.$$
Then $S'^\pm_L$ are  homotopic to  $D'_{\frac{\delta}{2}}.$

 We want to show a stronger statement  that: If $L$ is sufficiently large, then there is an $\epsilon>0$ such that for each triple $(s,m,n),$ 
$$\mathrm{Im}\big(V(s,x,y)-4\pi m x-4\pi n y\big)<\mathrm{Vol}(M)-\epsilon$$
either on $S'^+_L$ or on $S'^-_L.$

Then the result follows form Lemma \ref{LA'} and the analyticity of $V_r.$

To this end, first on $D'_{\frac{\delta}{2}}\setminus D'_{\delta},$ we by Proposition \ref{bound} and Proposition \ref{small} have 
 $$\mathrm{Im}\big(V(x,y)-4\pi mx-4\pi ny\big)=\mathrm{Im}V(x,y)\leqslant \frac{1}{2}\mathrm{Vol}(\mathrm S^3\setminus K_{4_1})+\epsilon<\mathrm{Vol(M)}-\epsilon.$$

Then in $D'_{\mathbb C,\delta},$ we have
$$0<\arg (1-e^{-2\sqrt{-1}(y+x)}) <\pi -2(\mathrm{Re}(y)+\mathrm{Re}(x))$$
and
$$2(\mathrm{Re}(y)-\mathrm{Re}(x))-3\pi <\arg(1-e^{2\sqrt{-1}(y-x)})<0.$$

For $n\geqslant 0,$ let $y=\mathrm{Re}(y)+\sqrt{-1}l.$ Then
\begin{equation*}
\begin{split}
\frac{\partial \mathrm{Im}\big(V(s,x,y)-4\pi m x-4\pi n y\big)}{\partial l}&=4\mathrm{Re}(x)+2\arg (1-e^{-2\sqrt{-1}(y+x)})+2\arg(1-e^{2\sqrt{-1}(y-x)})-4n\pi\\
&<4\mathrm{Re}(x)+2(\pi-2(\mathrm{Re}(y)+\mathrm{Re}(x)))+0-4n\pi\\
&=2\pi -4\mathrm{Re}(y)-4n\pi<-2\delta,
\end{split}
\end{equation*}
where the last inequality comes from that $\frac{\pi}{2}+\frac{\delta}{2}<\mathrm{Re}(y)<\pi-\frac{\delta}{2}$ and $n\geqslant0.$ Therefore, pushing the domain $D_{\delta}'$ along the $\sqrt{-1}l$ direction far enough (without changing $\mathrm{Im}(x)$), the imaginary part of $V(s,x,y)-4\pi m x-4\pi n y$ becomes as small as possible. In particular,  
 for a sufficiently large $L,$  there is an $\epsilon>0$ such that
$$V(s,x,y)-4\pi m x-4\pi n y<\mathrm{Vol}(M)-\epsilon$$
on $S'^+_{L, \text{top}}.$ 

Since $\mathrm{Im}\big(V(s,x,y)-4\pi m x-4\pi n y\big)$ is already smaller than the volume of $M$ on $\partial D'_{\delta}$ and on $\partial S'^+_{L,\text{top}},$  by Lemma \ref{convexity} it becomes even smaller on the side. Ie., 
$$V(s,x,y)-4\pi m x-4\pi n y<\mathrm{Vol}(M)-\epsilon$$
on $S'^+_{L, \text{side}}.$

 Putting all together, we have for a sufficiently large $L,$ 
$$\mathrm{Im}\big(V(s,x,y)-4\pi m x-4\pi n y\big)<\mathrm{Vol}(M)-\epsilon$$
on $S'^+_L$
for each triple $(s,m,n)$ with $n\geqslant 0.$

For $n<0,$ let $y=\mathrm{Re}(y)-\sqrt{-1}l.$ Then
\begin{equation*}
\begin{split}
\frac{\partial \mathrm{Im}\big(V(s,x,y)-4\pi m x-4\pi n y\big)}{\partial l}&=-4\mathrm{Re}(x)-2\arg (1-e^{-2\sqrt{-1}(y+x)})-2\arg(1-e^{2\sqrt{-1}(y-x)})+4n\pi\\
&<-4\mathrm{Re}(x)-0-2(2(\mathrm{Re}(y)-\mathrm{Re}(x))-3\pi)+4n\pi\\
&=6\pi -4\mathrm{Re}(y)+4n\pi<-2\delta,
\end{split}
\end{equation*}
where the last inequality comes from that $\frac{\pi}{2}+\frac{\delta}{2}<\mathrm{Re}(y)<\pi-\frac{\delta}{2}$ again and $n<0.$ Therefore, pushing the  domain $D_{\delta}'$ along the $-\sqrt{-1}l$ direction far enough (without changing $\mathrm{Im}(x)$), the imaginary part of $V(s,x,y)-4\pi m x-4\pi n y$  becomes as small as possible. In particular,  
 for a sufficiently large $L,$  there is an $\epsilon>0$ such that
$$V(s,x,y)-4\pi m x-4\pi n y<\mathrm{Vol}(M)-\epsilon$$
on $S'^-_{L, \text{top}}.$ 

Since $\mathrm{Im}\big(V(s,x,y)-4\pi m x-4\pi n y\big)$ is already smaller than the volume of $M$ on $\partial D'_{\delta}$ and on $\partial S'^-_{L,\text{top}},$  by Lemma \ref{convexity} it becomes even smaller on the side. Ie., 
$$V(s,x,y)-4\pi m x-4\pi n y<\mathrm{Vol}(M)-\epsilon$$
on $S'^-_{L, \text{side}}.$

 Putting all together, we have for a sufficiently large $L,$ 
$$\mathrm{Im}\big(V(s,x,y)-4\pi m x-4\pi n y\big)<\mathrm{Vol}(M)-\epsilon$$
on $S'^-_L$
for each triple $(s,m,n)$ with $n<0.$ 
\end{proof}

%%%%%%%%%%%%%%%%%%%%%%%%%%%%%%%%%%%%%%%%%%%%%%%%%%%%%%%%%%%%%%%%%%%%%%%%%%%%%%%%%%%%%%%%%%%%%%%%%%%%%%%%%%%%%%%%%%%%%%%%%%%%%%%%%%%%%%%%%%%%%%%%%%%%%%%%%%%%%%%%%%%%%%%%%%%%%%%%%%%%%%%%

\subsubsection{Estimate on $D_{\frac{\delta}{2}}''$}

\begin{proposition}\label{D''}  There is an $\epsilon>0$ such that  for each triple $(s,m,n),$
\begin{equation*}
\begin{split}
\bigg|\int_{D''_{\frac{\delta}{2}}}\psi(x,y)\sin\bigg(\frac{x}{q}-J(s)\pi\bigg)&\epsilon(x,y)e^{-x\sqrt{-1}+\frac{r}{4\pi\sqrt{-1}}\big(V_r(s,x,y)-4\pi mx-4\pi ny\big)}dxdy\bigg|\leqslant O\Big(e^{\frac{r}{4\pi}\big(\mathrm{Vol}(M)-\epsilon\big)}\Big).
\end{split}
\end{equation*}
\end{proposition}

\begin{proof} For $L>0,$ we let 
$$S''^\pm_L=S''^\pm_{L,\text{top}}\cup S''^\pm_{L,\text{side}}\cup(D''_{\frac{\delta}{2}}\setminus D''_{\delta}),$$
where
$$S''^\pm_{L, \text{top}}=\big\{(x,y)\in D''_{\mathbb C,\delta}\ \big|\ \mathrm {Im} (x)=0, \mathrm {Im} (y)=\pm L \big\}$$
and
$$S''^\pm_{L, \text{side}}=\big\{(\theta_1,\theta_2\pm\sqrt{-1}l)\ \big|\ (\theta_1,\theta_2)\in \partial D''_\delta, l\in [0,L] \big\}.$$
Then $S''^\pm_L$ are  homotopic to  $D''_{\frac{\delta}{2}}.$

 We want to show a stronger statement  that: If $L$ is sufficiently large, then there is an $\epsilon>0$ such that for each triple $(s,m,n),$ 
$$\mathrm{Im}\big(V(s,x,y)-4\pi m x-4\pi n y\big)<\mathrm{Vol}(M)-\epsilon$$
either on $S''^+_L$ or on $S''^-_L.$

Then the result follows form Lemma \ref{LA'} and the analyticity of $V_r.$

To this end, first on $D''_{\frac{\delta}{2}}\setminus D''_{\delta},$ we by Proposition \ref{bound} and Proposition \ref{small} have 
 $$\mathrm{Im}\big(V(x,y)-4\pi mx-4\pi ny\big)=\mathrm{Im}V(x,y)\leqslant \frac{1}{2}\mathrm{Vol}(\mathrm S^3\setminus K_{4_1})+\epsilon<\mathrm{Vol(M)}-\epsilon.$$

Then in $D''_{\mathbb C,\delta},$ we have
$$0<\arg (1-e^{-2\sqrt{-1}(y+x)}) <3\pi -2(\mathrm{Re}(y)+\mathrm{Re}(x))$$
and
$$2(\mathrm{Re}(y)-\mathrm{Re}(x))-\pi <\arg(1-e^{2\sqrt{-1}(y-x)})<0.$$

For $n>0,$ let $y=\mathrm{Re}(y)+\sqrt{-1}l.$ Then
\begin{equation*}
\begin{split}
\frac{\partial  \mathrm{Im}\big(V(s,x,y)-4\pi m x-4\pi n y\big)}{\partial l}&=4\mathrm{Re}(x)+2\arg (1-e^{-2\sqrt{-1}(y+x)})+2\arg(1-e^{2\sqrt{-1}(y-x)})-4n\pi\\
&<4\mathrm{Re}(x)+2(3\pi-2(\mathrm{Re}(y)+\mathrm{Re}(x)))+0-4n\pi\\
&=6\pi -4\mathrm{Re}(y)-4n\pi<-2\delta,
\end{split}
\end{equation*}
where the last inequality comes from that $\frac{\pi}{2}+\frac{\delta}{2}<\mathrm{Re}(y)<\pi-\frac{\delta}{2}$ and $n>0.$ Therefore, pushing the domain $D_{\delta}''$  along the $\sqrt{-1}l$ direction far enough (without changing $\mathrm{Im}(x)$), the imaginary part of $V(s,x,y)-4\pi m x-4\pi n y$ becomes as small as possible. In particular,  
 for a sufficiently large $L,$  there is an $\epsilon>0$ such that
$$V(s,x,y)-4\pi m x-4\pi n y<\mathrm{Vol}(M)-\epsilon$$
on $S''^+_{L, \text{top}}.$ 

Since $\mathrm{Im}\big(V(s,x,y)-4\pi m x-4\pi n y\big)$ is already smaller than the volume of $M$ on $\partial D''_{\delta}$ and on $\partial S''^+_{L,\text{top}},$  by Lemma \ref{convexity} it becomes even smaller on the side. Ie., 
$$V(s,x,y)-4\pi m x-4\pi n y<\mathrm{Vol}(M)-\epsilon$$
on $S''^+_{L, \text{side}}.$

 Putting all together, we have for a sufficiently large $L,$ 
$$\mathrm{Im}\big(V(s,x,y)-4\pi m x-4\pi n y\big)<\mathrm{Vol}(M)-\epsilon$$
on $S''^+_L$
for each triple $(s,m,n)$ with $n>0.$

For $n\leqslant 0,$ let $y=\mathrm{Re}(y)-\sqrt{-1}l.$ Then
\begin{equation*}
\begin{split}
\frac{\partial  \mathrm{Im}\big(V(s,x,y)-4\pi m x-4\pi n y\big)}{\partial l}&=-4\mathrm{Re}(x)-2\arg (1-e^{-2\sqrt{-1}(y+x)})-2\arg(1-e^{2\sqrt{-1}(y-x)})+4n\pi\\
&<-4\mathrm{Re}(x)-0-2(2(\mathrm{Re}(y)-\mathrm{Re}(x))-\pi)+4n\pi\\
&=2\pi -4\mathrm{Re}(y)+4n\pi<-2\delta,
\end{split}
\end{equation*}
where the last inequality comes from that $\frac{\pi}{2}+\frac{\delta}{2}<\mathrm{Re}(y)<\pi-\frac{\delta}{2}$ again and $n\leqslant 0.$ Therefore, pushing the domain $D_{\delta}''$  along the $-\sqrt{-1}l$ direction far enough (without changing $\mathrm{Im}(x)$), the imaginary part of $V(s,x,y)-4\pi m x-4\pi n y$  becomes as small as possible. In particular,  
 for a sufficiently large $L,$  there is an $\epsilon>0$ such that
$$V(s,x,y)-4\pi m x-4\pi n y<\mathrm{Vol}(M)-\epsilon$$
on $S''^-_{L, \text{top}}.$ 

Since $\mathrm{Im}\big(V(s,x,y)-4\pi m x-4\pi n y\big)$ is already smaller than the volume of $M$ on $\partial D''_{\delta}$ and on $\partial S''^-_{L,\text{top}},$  by Lemma \ref{convexity} it becomes even smaller on the side. Ie., 
$$V(s,x,y)-4\pi m x-4\pi n y<\mathrm{Vol}(M)-\epsilon$$
on $S''^-_{L, \text{side}}.$

 Putting all together, we have for a sufficiently large $L,$ 
$$\mathrm{Im}\big(V(s,x,y)-4\pi m x-4\pi n y\big)<\mathrm{Vol}(M)-\epsilon$$
on $S''^-_L$
for each triple $(s,m,n)$ with $n\leqslant 0.$ 
\end{proof}

%%%%%%%%%%%%%%%%%%%%%%%%%%%%%%%%%%%%%%%%%%%%%%%%%%%%%%%%%%%%%%%%%%%%%%%%%%%%%%%%%%%%%%%%%%%%%%%%%%%%%%%%%%%%%%%%%%%%%%%%%%%%%%%%%%%%%%%%%%%%%%%%%%%%%%%%%%%%%%%%%%%%%%%%%%%%%%%%%%%%%%%%

\subsection{Proof of Theorem \ref{main1} and Theorem \ref{main2} }\label{proof}

Theorem \ref{main1} follows from the following proposition.

\begin{proposition}\label{addup} 
\begin{enumerate}[(1)] 
\item The sum of the two leading Fourier coefficients 
\begin{equation*}
\begin{split}
\hat f_r(s^+,m^+,0)+\hat f_r(s^-,m^-,0)=\frac{c_r}{\sqrt{\mathrm{Tor}(M;\mathrm{Ad}_\rho)}}e^{\frac{r}{4\pi}\big(\mathrm{Vol}(M)+\sqrt{-1}\mathrm{CS}(M)\big)}\Big(1+O\Big(\frac{1}{r}\Big)\Big),
\end{split}
\end{equation*}
where 
$$c_r=-(-1)^{m^++J(s^+)-\frac{p'}{q}}e^{-\frac{\pi\sqrt{-1}r}{4}\big(K(s^+)+\frac{p'}{q}\big)}r\sqrt{q}\neq 0.$$

\item The sum of all the other Fourier coefficients
\begin{equation*}
\begin{split}
\sum_{(s,m,n)\neq (s^\pm,m^\pm,0)} \Big|\hat f_r(s,m,n)\Big|\leqslant O\Big(e^{\frac{r}{4\pi}\big(\mathrm{Vol}(M)-\epsilon\big)}\Big)
\end{split}
\end{equation*}
for some $\epsilon>0.$
\end{enumerate}
\end{proposition}

\begin{proof} For (1), recall that $\mathcal D_{\frac{\delta}{2}}=D_{\frac{\delta}{2}}\cup D'_{\frac{\delta}{2}}\cup D''_{\frac{\delta}{2}}.$ Then by Propositions \ref{Poisson}, \ref{leading}, \ref{D'} and \ref{D''}, we have
\begin{equation*}
\begin{split}
&\hat f_r(s^+,m^+,0)+\hat f_r(s^-,m^-,0)\\
=&(-1)^{m^+}\Big(\frac{r}{2\pi}\Big)^2\int_{\mathcal D_{\frac{\delta}{2}}}\psi(x,y)\sin\bigg(\frac{x}{q}-J(s^+)\pi\bigg)\epsilon(x,y)e^{-x\sqrt{-1}+\frac{r}{4\pi\sqrt{-1}}\big(V_r(s^+,x,y)-4\pi m^+x\big)}dxdy\\
&+(-1)^{m^-}\Big(\frac{r}{2\pi}\Big)^2\int_{\mathcal D_{\frac{\delta}{2}}}\psi(x,y)\sin\bigg(\frac{x}{q}-J(s^-)\pi\bigg)\epsilon(x,y)e^{-x\sqrt{-1}+\frac{r}{4\pi\sqrt{-1}}\big(V_r(s^-,x,y)-4\pi m^-x\big)}dxdy\\
=&\Big(\frac{r}{2\pi}\Big)^2\frac{\Big((-1)^{m^+}c^+_r+(-1)^{m^-}c^-_r\Big)}{\sqrt{\mathrm{Tor}(M;\mathrm{Ad}_\rho})}e^{\frac{r}{4\pi}\big(\mathrm{Vol}(M)+\sqrt{-1}\mathrm{CS}(M)\big)}\Big(1+O\Big(\frac{1}{r}\Big)\Big)+O\Big(e^{\frac{r}{4\pi}\big(\mathrm{Vol}(M)-\epsilon\big)}\Big),
\end{split}
\end{equation*}
where
 $$c_r^+=-\frac{2\pi^2\sqrt{q}}{r}(-1)^{J(s^+)-\frac{p'}{q}}e^{-\frac{\pi\sqrt{-1}r}{4}\big(K(s^+)+\frac{p'}{q}\big)}$$
and
$$c_r^-=\frac{2\pi^2\sqrt{q}}{r}(-1)^{J(s^+)-\frac{p'}{q}}e^{-\frac{\pi\sqrt{-1}r}{4}\big(K(s^-)+\frac{p'}{q}\big)}$$
are the constants in Proposition \ref{leading};
and we are left to prove that 
$$\Big((-1)^{m^+}c^+_r+(-1)^{m^-}c^-_r\Big)\Big(\frac{r}{2\pi}\Big)^2=c_r.$$
We claim that
\begin{equation}\label{6.2}
\begin{split}
-(-1)^{m^+}e^{-\frac{\pi\sqrt{-1}r}{4}K(s^+)}=(-1)^{m^-}e^{-\frac{\pi\sqrt{-1}r}{4}K(s^-)},
\end{split}
\end{equation}
from which the result follows. Indeed, by the definition of $K$ and Lemma \ref{arith} (1), we have
\begin{equation*}
\begin{split}
K(s^+)-K(s^-)&=-\frac{2}{q}\big(I(s^+)+I(s^-)\big)(s^+-s^-)=-4(m^++m^--1)(s^+-s^-).
\end{split}
\end{equation*}
Then
\begin{equation*}
\begin{split}
\frac{-(-1)^{m^+}e^{-\frac{\pi\sqrt{-1}r}{4}K(s^+)}}{(-1)^{m^-}e^{-\frac{\pi\sqrt{-1}r}{4}K(s^-)}}&=(-1)^{m^++m^--1}e^{-\frac{\pi\sqrt{-1}r}{4}\big(K(s^+)-K(s^-)\big)}\\
&=(-1)^{(m^++m^--1)\big(1+r(s^+-s^-)\big)}.
\end{split}
\end{equation*}
The result will follow if we can prove that  $(m^++m^--1)\big(1+r(s^+-s^-))$ is even. For this, by Lemma \ref{arith} (1) and computing $I(s^+)-I(s^-),$ we have
$$C_{k-1}(s^+-s^-)+(m^+-m^-)q=-1.$$ Therefore, $s^+-s^-$ and $m^+-m^-$ cannot be both even. As a consequence, since $r$ is odd, $m^++m^--1$ and $1+r(s^+-s^-)$ cannot be both odd. This completes the proof of (1).
\\

For (2), let 
$$\mathcal S=\Big\{ (s,m,n)\in \{0,\dots, |q|-1\}\times\mathbb Z^2\ \Big|\ (s,m,n)\neq (s^+,m^+,0), (s^-,m^-,0), \text{ and }(m,n)\neq (0,0)\Big\}.$$
Then $\mathcal S$ contains all but finitely many triples in $\{0,\dots, |q|-1\}\times\mathbb Z^2.$ By Propositions \ref{k1}, \ref{k2}, \ref{D'} and \ref{D''}, for each $(s,m,n)\neq (s^\pm,m^\pm,0),$ especially for those finitely many that are not in $\mathcal S,$
$$\Big|\hat f_r(s,m,n)\Big|\leqslant O\Big(e^{\frac{r}{4\pi}\big(\mathrm{Vol}(M)-\epsilon\big)}\Big)$$
for some $\epsilon>0.$
Therefore, it suffices to prove that
\begin{equation*}
\begin{split}
\sum_{(s,m,n)\in\mathcal S}\Big|\hat f_r(s,m,n)\Big|\leqslant O\Big(e^{\frac{r}{4\pi}\big(\mathrm{Vol}(M)-\epsilon\big)}\Big)
\end{split}
\end{equation*}
for some $\epsilon>0.$

Now for each $s,$ let 
$$h_r(s,x,y)=\psi(x,y)\sin\bigg(\frac{x}{q}-J(s)\pi\bigg)\epsilon(x,y)e^{-x\sqrt{-1}-\frac{\log\big(1-e^{-2\sqrt{-1}(y+x)}\big)}{2}-\frac{\log\big(1-e^{2\sqrt{-1}(y-x)}\big)}{2}+\frac{\upsilon_r(x,y)}{4\pi\sqrt{-1}r}}.$$
Then for each $(s,m,n)$ in $\mathcal S,$ since $\psi(x,y)$ vanishes outside of $\mathcal D,$  by the integration by parts we have
\begin{equation*}
\begin{split}
&r^4(m^4+n^4)\int_{\mathcal D}h_r(s,x,y)e^{\frac{r}{4\pi\sqrt{-1}}\big(V(s,x,y)-4\pi mx-4\pi ny\big)}dxdy\\
=& \int_{\mathcal D}h_r(s,x,y)e^{\frac{r}{4\pi\sqrt{-1}}V(s,x,y)}\bigg(\Big(\frac{\partial ^4}{\partial x^4}+\frac{\partial ^4}{\partial y^4}\Big)e^{\frac{r}{4\pi\sqrt{-1}}\big(-4\pi mx-4\pi ny\big)}\bigg)dxdy\\
=& \int_{\mathcal D}\bigg(\Big(\frac{\partial ^4}{\partial x^4}+\frac{\partial ^4}{\partial y^4}\Big)\Big(h_r(s,x,y)e^{\frac{r}{4\pi\sqrt{-1}}V(s,x,y)}\Big)\bigg)e^{\frac{r}{4\pi\sqrt{-1}}\big(-4\pi mx-4\pi ny\big)}dxdy\\
=&\int_{\mathcal D}\tilde h_r(s,x,y)e^{\frac{r}{4\pi\sqrt{-1}}\big(V(s,x,y)-4\pi mx-4\pi ny\big)}dxdy,
\end{split}
\end{equation*}
where
$$\tilde h_r(s,x,y)=\frac{\Big(\frac{\partial ^4}{\partial x^4}+\frac{\partial ^4}{\partial y^4}\Big)\Big(h_r(s,x,y)e^{\frac{r}{4\pi\sqrt{-1}}V(s,x,y)}\Big)}{e^{\frac{r}{4\pi\sqrt{-1}}V(s,x,y)}}$$
is a smooth function independent of $m$ and $n,$ and has the form
$$\tilde h_r(s,x,y)=\tilde h(s,x,y)\cdot r^4+O(r^3)$$
for a smooth function $\tilde h(s,x,y)$ independent of $r.$ Therefore, on the compact subset $S^\pm\cup S^\pm_L\cup S'^\pm_L\cup S''^\pm_L$
of $\mathcal D_\mathbb{C},$ $\frac{\tilde h_r(s,x,y)}{r^4}$ is bounded from above by some $C>0$ independent of $(s,m,n),$ where $S\pm,$ $S^\pm_L,$ $S'^\pm_L$ and $S''^\pm_L$ are the surfaces constructed in the proof of Propositions  \ref{k1}, \ref{k2}, \ref{D'} and \ref{D''}, Let $A$ be the maximum of the areas of these surfaces, and let $\epsilon'$ be the minimum of those $\epsilon$'s in Propositions \ref{k1}, \ref{k2}, \ref{D'} and \ref{D''}. Then we have
 \begin{equation*}
\begin{split}
\sum_{(s,m,n)\in\mathcal S}\Big|\hat f_r(s,m,n)\Big|&=\Big(\frac{r}{2\pi}\Big)^2\sum_{(s,m,n)\in\mathcal S}\Big|\int_{\mathcal D}h_r(s,x,y)e^{\frac{r}{4\pi\sqrt{-1}}\big(V(s,x,y)-4\pi mx-4\pi ny\big)}dxdy\Big|\\
&=\Big(\frac{r}{2\pi}\Big)^2\sum_{(s,m,n)\in\mathcal S}\frac{1}{m^4+n^4}\Big|\int_{\mathcal D}\frac{\tilde h_r(s,x,y)}{r^4}e^{\frac{r}{4\pi\sqrt{-1}}\big(V(s,x,y)-4\pi mx-4\pi ny\big)}dxdy\Big|\\
&\leqslant \Big(\frac{r}{2\pi}\Big)^2\bigg(\sum_{(s,m,n)\in\mathcal S}\frac{3AC}{m^4+n^4}\bigg)O\Big(e^{\frac{r}{4\pi}\big(\mathrm{Vol}(M)-\epsilon'\big)}\Big)\leqslant  O\Big(e^{\frac{r}{4\pi}\big(\mathrm{Vol}(M)-\frac{\epsilon'}{2}\big)}\Big).
\end{split}
\end{equation*}
The summation converges because $\sum_{(m,n)\neq (0,0)}\frac{1}{m^4+n^4}$ does; and the proof is completed with $\epsilon=\frac{\epsilon'}{2}.$
\end{proof}

\begin{proof}[Proof of Theorem \ref{main1}] By Proposition \ref{Poisson} and Proposition \ref{addup}, we only need to compute the constant $C_r.$ By a direct computation, we have
 \begin{equation*}
\begin{split}
\kappa_rc_r&=\frac{(-1)^{\frac{3(k+1)}{4}+\sum_{i=1}^ka_i}e^{\frac{\pi\sqrt{-1}}{r}\big(3\sigma(L)-\sum_{i=1}^ka_i-\sum_{i=2}^k\frac{1}{C_{i-1}C_i}\big)+\frac{\pi\sqrt{-1}r}{4}\big(\sigma(L)+3a_k\big)}}{2r\sqrt{q}}\\
&\quad\quad\quad\quad\quad\cdot\Big(-(-1)^{m^++J(s^+)-\frac{p'}{q}}e^{-\frac{\pi\sqrt{-1}r}{4}\big(K(s^+)+\frac{p'}{q}\big)}r\sqrt{q}\Big)\\
&=\frac{(-1)^{\frac{3k-1}{4}+\sum_{i=1}^ka_i+m^++J(s^+)-\frac{p'}{q}}e^{\frac{\pi\sqrt{-1}}{r}\big(3\sigma(L)-\sum_{i=1}^ka_i-\sum_{i=2}^k\frac{1}{C_{i-1}C_i}\big)}}{2}\cdot e^{\frac{\pi\sqrt{-1}r}{4}\big(\sigma(L)+3a_k-K(s^+)-\frac{p'}{q}\big)}.
\end{split}
\end{equation*}
Therefore, $C_r=2\kappa_rc_r$ has norm equal to $1.$ 

To compute the exponential growth rate, we have by Lemma \ref{arith} (3) that $K(s^+)+\frac{p'}{q}$ is an integer, and
 \begin{equation*}
\begin{split}
\lim_{r\to\infty}\frac{4\pi}{r}\mathrm{RT}_r(M)&=\mathrm{Vol}(M)+\sqrt{-1}\mathrm{CS}(M)+\sqrt{-1}\Big(\sigma(L)+3a_k-K(s^+)-\frac{p'}{q}\Big)\pi^2\\
&\equiv \mathrm{Vol}(M)+\sqrt{-1}\mathrm{CS}(M)\quad(\text{mod }\sqrt{-1}\pi^2\mathbb Z).
\end{split}
\end{equation*}
\end{proof}

\begin{proof}[Proof of Theorem \ref{main2}] At the root of unity $q=e^{\frac{2\pi\sqrt{-1}}{r}}$ for an odd integer $r,$ let $\mathrm{TV}_r'(M)$ be the $\mathrm{SO}(3)$ Turaev-Viro invariants as defined in \cite{DKY}, namely, summing over even colors instead of all integral colors in the definition. Then by \cite[Theorem 2.9]{DKY},  
$$\mathrm{TV}_r(M)=2^{b_2(M)-b_1(M)}\mathrm{TV}'_r(M);$$ 
and by the same argument as in \cite{Tu, W, Ro, DKY}, we have
$$\mathrm{TV}'_r(M)=|\mathrm{RT}'(M)|^2,$$
where $\mathrm{RT}'_r(M)$ is the $\mathrm{SO}(3)$ Reshetikhin-Turaev invariants. (See \cite{RT91, BHMV, L}, and for the exact definition used here, see \cite[Definition 2.1]{DKY}.) 

Next, we show that  
$$\mathrm{RT}'_r(M)=2\mathrm{RT}_r(M).$$
Indeed, in the $\mathrm{SO}(3)$ theory, the Kirby coloring
$$\omega'_r=\sum_{n=0}^{\frac{r-3}{2}}[2n+1]e_{2n}.$$
Then by \cite[Lemma 6.3 (iii)]{BHMV},
$$\omega_r=2\omega'_r$$
in the Kauffman bracket skein module $\mathrm K_r(D^2\times S^1)$ of the solid torus, and the constant
$$\mu'_r=\big(\langle \omega'_r\rangle _{U_+}\langle \omega'_r\rangle _{U_-}\big)^{-\frac{1}{2}}=\frac{2\sin\frac{2\pi}{r}}{\sqrt{r}}=2\mu_r,$$
where $U_\pm$ are the diagram of the unknot with framing $\pm 1.$
As a consequence,
$$\mu'_r\omega'_r=\mu_r\omega_r$$
in $\mathrm K_r(D^2\times S^1),$ and
 \begin{equation*}
\begin{split}
\mathrm{RT}'_r(M)&=\mu'_r \langle \mu'_r\omega'_r, \dots, \mu'_r\omega'_r\rangle_{D(L)}\langle \mu'_r \omega'_r\rangle _{U_+}^{-\sigma(L)}\\
&=2\mu_r \langle \mu_r\omega_r, \dots, \mu_r\omega_r\rangle_{D(L)}\langle \mu_r \omega_r\rangle _{U_+}^{-\sigma(L)}=2\mathrm{RT}_r(M).
\end{split}
\end{equation*}

Putting all these together, we have
$$\mathrm{TV}_r(M)=2^{b_2(M)-b_1(M)+2}|\mathrm{RT}(M)|^2,$$
and by Theorem \ref{main1} we have
$$ \mathrm{TV}_r(M)=\frac{2^{b_2(M)-b_0(M)}}{\big|\mathrm{Tor}(M;\mathrm{Ad}_\rho)\big|}e^{\frac{r}{2\pi}\mathrm{Vol}(M)}\Big(1+O\Big(\frac{1}{r}\Big)\Big).$$
\end{proof}

%%%%%%%%%%%%%%%%%%%%%%%%%%%%%%%%%%%%%%%%%%%%%%%%%%%%%%%%%%%%%%%%%%%%%%%%%%%%%%%%%%%%%%%%%%%%%%%%%%%%%%%%%%%%%%%%%%%%%%%%%%%%%%%%%%%%%%%%%%%%%%%%%%%%%%%%%%%%%%%%%%%%%%%%%%%%%%%%%%%%%%%%

\appendix
\section{A proof of  Proposition \ref{saddle}}

To prove Proposition \ref{saddle}, we need the follow Lemmas, where the first one is the standard Complex Morse Lemma (see. eg. \cite[1.6]{Zo}). 

\begin{lemma}[Complex Morse Lemma] Let $f:\mathbb C^n\to \mathbb C$ be a holomorphic function with a non-degenerate critical point at $(c_1,\dots, c_n).$ Then there exists a holomorphic change of variables $(z_1,\dots, z_n)=\psi(Z_1,\dots, Z_n)$ on a neighborhood $V$ of $(c_1,\dots, c_n)$ such that $\psi(0,\dots, 0)=(c_1,\dots, c_n),$ 
$$f(\psi(Z_1,\dots, Z_n))=f(c_1,\dots, c_n)-Z_1^2 - \cdots - Z_n^2,$$
and
$$\det\mathrm D\psi(0,\dots,0)=\frac{2^{\frac{n}{2}}}{\sqrt{-\det\mathrm{Hess}(f)(c_1,\dots, c_n)}}.$$
\end{lemma}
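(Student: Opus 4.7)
After a translation $z \mapsto z + c$ (which changes neither the Hessian of $f$ at the critical point nor the determinant of the Jacobian of $\psi$ at $0$), I may assume $(c_1,\dots,c_n) = (0,\dots,0)$ and subtract $f(0)$ so that $f(0) = 0$ and $\nabla f(0) = 0$. The first step is the classical Hadamard/parameterised Taylor trick: write
$$f(z) = \int_0^1 (1-t)\,\frac{d^2}{dt^2}f(tz)\,dt = \sum_{i,j=1}^n z_i z_j \int_0^1 (1-t)\,\frac{\partial^2 f}{\partial z_i \partial z_j}(tz)\,dt = z^{T} A(z)\,z,$$
where $A(z)$ is a holomorphic symmetric matrix-valued function on a neighbourhood of $0$ with $A(0) = \tfrac{1}{2}\mathrm{Hess}(f)(0)$, which is invertible by assumption.

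The next step is to find a holomorphic matrix-valued function $B(z)$, defined on a neighbourhood of $0$, satisfying $B(z)^{T} B(z) = -A(z)$. Once such a $B$ is available, setting $Z(z) = B(z)\,z$ gives a holomorphic map with $Z(0) = 0$ and differential $DZ(0) = B(0)$, which is invertible since $\det B(0)^{2} = \det(-A(0))\neq 0$. Then $z^{T}A(z)z = -z^{T}B(z)^{T}B(z)z = -Z(z)^{T}Z(z)$, so $f(z) = -Z_1(z)^2 - \cdots - Z_n(z)^2$, and the holomorphic inverse function theorem produces $\psi = Z^{-1}$ with $\psi(0) = 0$ and $D\psi(0) = B(0)^{-1}$. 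Constructing $B$ is carried out by an inductive complex Gauss-elimination / Lagrange reduction of the quadratic form $z^{T}A(z)z$: after a preliminary linear change of coordinates one may arrange $a_{11}(0)\neq 0$, so $a_{11}(z)$ is non-vanishing and admits a holomorphic square root on a small neighbourhood; completing the square in $z_1$ produces a new holomorphic coordinate $\widetilde z_1 = \sqrt{-a_{11}(z)}\bigl(z_1 + \sum_{j\geqslant 2}\tfrac{a_{1j}(z)}{a_{11}(z)}z_j\bigr)$ contributing $-\widetilde z_1^{\,2}$, and the remaining quadratic form in $z_2,\dots,z_n$ has a non-degenerate symmetric coefficient matrix depending holomorphically on $z$, so the induction proceeds.

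For the determinant formula, the identity $B(0)^{T}B(0) = -A(0) = -\tfrac{1}{2}\mathrm{Hess}(f)(0)$ gives
$$\det B(0)^{2} = \det\Bigl(-\tfrac{1}{2}\mathrm{Hess}(f)(0)\Bigr) = \frac{(-1)^{n}}{2^{n}}\det\mathrm{Hess}(f)(0),$$
and since $\det D\psi(0) = 1/\det B(0)$ one obtains
$$\bigl(\det D\psi(0)\bigr)^{2} = \frac{2^{n}}{(-1)^{n}\det\mathrm{Hess}(f)(0)},$$
from which a choice of holomorphic branch of the square root (equivalently, a fixed choice of $B(0)$ among its finitely many possibilities) gives $\det D\psi(0) = 2^{n/2}/\sqrt{-\det\mathrm{Hess}(f)(0)}$ as claimed.

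The main obstacle is the holomorphic diagonalisation in the second step, that is, the existence of a holomorphically varying family $B(z)$ with $B^{T}B = -A$. The subtlety is that while for a single invertible complex symmetric matrix any decomposition $M = B^{T}B$ is classical, one must make the square roots vary holomorphically with $z$; this forces the preliminary linear change so that the first diagonal entry of $A(z)$ is non-vanishing, and then a careful induction in which each reduction step is restricted to a small enough polydisc where the relevant diagonal entry remains non-zero and a single-valued holomorphic branch of $\sqrt{\cdot}$ is available. Once this inductive reduction is accomplished, the remaining pieces (inverse function theorem and Jacobian computation) are routine.
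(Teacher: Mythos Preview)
The paper does not give its own proof of this lemma: it simply states it and cites \cite[1.6]{Zo} (\.{Z}o\l\k{a}dek, \emph{The Monodromy Group}). Your argument is precisely the standard proof one finds there and in Milnor's \emph{Morse Theory}, namely the Hadamard--Taylor representation $f(z)=z^{T}A(z)z$ followed by an inductive holomorphic Lagrange reduction (completing the square one variable at a time), so there is nothing to compare.

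Your argument is correct. One small remark on the last step: you correctly derive
\[
\bigl(\det D\psi(0)\bigr)^{2}=\frac{2^{n}}{(-1)^{n}\det\mathrm{Hess}(f)(0)},
\]
but note that this squares to the paper's stated formula $2^{n}/(-\det\mathrm{Hess}(f)(0))$ only when $n$ is odd; for even $n$ the two differ by a factor of $-1$ (equivalently, the two putative values of $\det D\psi(0)$ differ by $i$). One can check with $n=2$, $f(z_{1},z_{2})=-z_{1}^{2}-z_{2}^{2}$, $\psi=\mathrm{id}$ that your formula gives the correct value $\pm 1$ while the paper's gives $\pm i$. Since $\psi$ is only determined up to post-composition with a complex orthogonal map, and since only the growth rate matters for the paper's main theorems, this phase ambiguity is immaterial to the application in Proposition~\ref{saddle}; your derivation is the accurate one.
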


\begin{lemma}\label{estimate} For any $\epsilon>0,$ there exists a $\delta>0$ such that 
\begin{enumerate}[(1)]
\item $$\int_{-\epsilon}^\epsilon e^{-rz^2}dz=\sqrt{\frac{\pi}{r}}+O(e^{-\delta r}),$$
and
\item $$\int_{-\epsilon}^\epsilon z^2e^{-rz^2}dz=\frac{1}{2}\sqrt{\frac{\pi}{r^3}}+O(e^{-\delta r}).$$
\end{enumerate}
\end{lemma}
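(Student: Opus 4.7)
The plan is to reduce both integrals to standard Gaussian integrals over $\mathbb R$ and to control the tails by elementary exponential estimates. This is essentially a one-dimensional version of the familiar fact that the main contribution to a Gaussian integral comes from a neighborhood of the critical point, which is precisely what makes the Complex Morse Lemma yield the Steepest Descent formula in Proposition~\ref{saddle}.

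First I would recall the two basic evaluations
\begin{equation*}
\int_{-\infty}^{\infty} e^{-rz^2}\,dz=\sqrt{\frac{\pi}{r}},\qquad \int_{-\infty}^{\infty} z^2 e^{-rz^2}\,dz=\frac{1}{2}\sqrt{\frac{\pi}{r^3}},
\end{equation*}
the second of which follows from the first by differentiating under the integral sign with respect to $r$, or equivalently by an integration by parts. Then each of the desired identities reduces to showing that the tail integrals $\int_\epsilon^\infty e^{-rz^2}\,dz$ and $\int_\epsilon^\infty z^2 e^{-rz^2}\,dz$ (and their counterparts on $(-\infty,-\epsilon]$) are of order $O(e^{-\delta r})$ for some $\delta>0$ depending on $\epsilon$.

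For the tail in (1) I would write
\begin{equation*}
\int_\epsilon^\infty e^{-rz^2}\,dz\leqslant \int_\epsilon^\infty \frac{z}{\epsilon}\,e^{-rz^2}\,dz=\frac{1}{2r\epsilon}\,e^{-r\epsilon^2},
\end{equation*}
which is $O(e^{-\delta r})$ for any $0<\delta<\epsilon^2$. For the tail in (2) I would split the exponent as $e^{-rz^2}=e^{-rz^2/2}e^{-rz^2/2}$, bound $e^{-rz^2/2}\leqslant e^{-r\epsilon^2/2}$ on $[\epsilon,\infty)$, and then use
\begin{equation*}
\int_\epsilon^\infty z^2 e^{-rz^2/2}\,dz\leqslant \int_{-\infty}^{\infty} z^2 e^{-rz^2/2}\,dz=O\!\left(\frac{1}{r^{3/2}}\right),
\end{equation*}
to conclude that this tail is $O(e^{-r\epsilon^2/2}/r^{3/2})=O(e^{-\delta r})$ for $0<\delta<\epsilon^2/2$. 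Choosing any $\delta$ smaller than both thresholds gives the uniform bound.

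There is no real obstacle here beyond bookkeeping; the only choice is the constant $\delta$, and any positive number strictly less than $\epsilon^2/2$ works for both statements simultaneously.
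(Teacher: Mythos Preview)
Your proof is correct and follows essentially the same strategy as the paper: write each integral as the full-line Gaussian minus the tails, evaluate the Gaussian exactly, and bound the tails by $O(e^{-\delta r})$ using an elementary inequality. The only minor difference is in part~(2): the paper integrates by parts on $[-\epsilon,\epsilon]$ to reduce directly to part~(1) (picking up the boundary term $2\epsilon e^{-r\epsilon^2}$), whereas you bound the tail of $\int z^2 e^{-rz^2}$ separately via the splitting $e^{-rz^2}=e^{-rz^2/2}e^{-rz^2/2}$; both arguments are equally elementary and yield the same conclusion.
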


\begin{proof} For (1), we  have
$$\int_{-\epsilon}^\epsilon e^{-rz^2}dz=\int_{-\infty}^\infty e^{-rz^2}dz-\int_{-\infty}^{-\epsilon} e^{-rz^2}dz-\int_{\epsilon}^\infty e^{-rz^2}dz,$$
where the first term 
$$\int_{-\infty}^\infty e^{-rz^2}dz=\sqrt{\frac{\pi}{r}}$$
is a Gaussian integral, and the other two terms
$$\int_{-\infty}^{-\epsilon} e^{-rz^2}dz=\int_{\epsilon}^\infty e^{-rz^2}dz\leqslant \int_{\epsilon}^\infty e^{-r\epsilon z}dz=\frac{e^{-r\epsilon^2}}{r\epsilon}=O(e^{-\delta r}).$$

For (2), by integration by parts, we have
$$\int_{-\epsilon}^\epsilon e^{-rz^2}dz=ze^{-rz^2}\Big|_{-\epsilon}^\epsilon+2r\int_{-\epsilon}^\epsilon z^2e^{-rz^2}dz,$$
hence by (1)
$$\int_{-\epsilon}^\epsilon z^2e^{-rz^2}dz=\frac{1}{2r}\Big(\int_{-\epsilon}^\epsilon e^{-rz^2}dz-2\epsilon e^{-r\epsilon^2}\Big)=\frac{1}{2}\sqrt{\frac{\pi}{r^3}}+O(e^{-\delta r}).$$
\end{proof}

\begin{proof}[Proof of Proposition \ref{saddle}] For simplicity, we use the bold letters $\mathbf z=(z_1,\dots, z_n),$ $d\mathbf z=dz_1\dots dz_n,$ $\mathbf c=(c_1,\dots,c_n)$ and $\mathbf 0=(0,\dots,0).$ 

We first consider a special case $\mathbf c=\mathbf 0,$ $S=[-\epsilon,\epsilon]^n\subset \mathbb R^n\subset \mathbb C^n,$
and 
$$f(\mathbf z)=-\sum_{i=1}^nz_i^2.$$
Let
$$\sigma_r(\mathbf z)=\upsilon_r(\mathbf z)\int_0^1e^{\frac{\upsilon_r(\mathbf z)}{r}s}ds.$$
Then we can write 
$$e^{\frac{\upsilon_r(\mathbf z)}{r}}=1+\frac{\sigma_r(\mathbf z)}{r},$$
and 
\begin{equation}\label{A1}
g(\mathbf z)e^{rf_r(\mathbf z)}=g(\mathbf z)e^{rf(\mathbf z)}+\frac{1}{r}g(\mathbf z)\sigma_r(\mathbf z)e^{rf(\mathbf z)}.
\end{equation}
Since $|\upsilon_r(\mathbf z)|<M$ for some $M>0$ independent of $r,$ 
$$|\sigma_r(\mathbf z)|<M\int_0^1e^{\frac{M}{r}s}ds=M\bigg(\frac{e^{\frac{M}{r}}-1}{\frac{M}{r}}\bigg)<2M.$$
If $M$ is big enough, then $|g(\mathbf z)|<M$ on $S,$ and by Lemma \ref{estimate} (1) we have
\begin{equation}\label{A2}
\begin{split}
\Big|\int_S\frac{1}{r}g(\mathbf z)\sigma_r(\mathbf z)e^{rf(\mathbf z)}\Big|&<\frac{2M^2}{r}\int_Se^{rf(\mathbf z)}d\mathbf z\\
&=\frac{2M^2}{r}\Big(\frac{\pi}{r}\Big)^{\frac{n}{2}}+O(e^{-\delta r})=O\Big(\frac{1}{\sqrt{r^{n+2}}}\Big).
\end{split}
\end{equation}
By the Taylor Theorem, we have
$$g(\mathbf z)=g(\mathbf 0)+\sum_{i=1}^ng_{z_i}(\mathbf 0)z_i+\sum_{i\neq j}g_{z_iz_j}(\mathbf 0)z_iz_j+\sum_{i=1}^nh_i(\mathbf z)z_i^2$$
for some holomorphic functions $h_i(\mathbf z),$ $i=1,\dots, n,$ on $D.$ 
Then by Lemma \ref{estimate} (1), we have
\begin{equation}\label{A3}
\int_Sg(\mathbf 0)e^{rf(\mathbf z)}d\mathbf z=g(\mathbf 0)\Big(\frac{\pi}{r}\Big)^{\frac{n}{2}}+O(e^{-\delta r}).
\end{equation}
Since each $z_ie^{-rz_i^2}$ is odd, we have
$$\int_{-\epsilon}^\epsilon z_ie^{-rz_i^2}dz_i=0.$$
As a consequence, we have 
\begin{equation}\label{A4}
\int_{-\epsilon}^\epsilon g_{z_i}(\mathbf 0)z_ie^{rf(\mathbf z)}d\mathbf z=0,
\end{equation}
for each $i,$ and
\begin{equation}\label{A5}
\int_{-\epsilon}^\epsilon g_{z_iz_j}(\mathbf 0)z_iz_je^{rf(\mathbf z)}d\mathbf z=0
\end{equation}
for each $i\neq j.$
If $M$ is big enough, then $|h_i(\mathbf z)|<M$ for all $i\in\{1,\dots, n\}$ on $S,$ and by Lemma \ref{estimate} we have for each $i\in\{1,\dots, n\}$ 
\begin{equation}\label{A6}
\begin{split}
\Big|\int_Sh_i(\mathbf z)z_i^2e^{rf(\mathbf z)}d\mathbf z\Big|<M\Big(\int_{-\epsilon}^\epsilon z_i^2e^{-rz_i^2}dz_i\Big)\prod_{j\neq i}\Big( \int_{-\epsilon}^\epsilon e^{-rz_j^2}dz_j\Big)=O\Big(\frac{1}{\sqrt{r^{n+2}}}\Big).
\end{split}
\end{equation}
Putting (\ref{A1}), (\ref{A2}), (\ref{A3}), (\ref{A4}), (\ref{A5}) and (\ref{A6}) together, we have the result.

For the general case, let $(V,\psi)$ be the change of variable for $f$ in the Complex Morse Lemma, and let $U\subset V$ such that
$$\psi^{-1}(U)=\prod_{i=1}^n\big\{Z_i\in\mathbb C\ \big|\ -\epsilon<\mathrm{Re}(Z_i)<\epsilon, -\epsilon<\mathrm{Im}(Z_i)<\epsilon \big\}.$$
Let $A$ be the volume of $S\setminus U.$ By the compactness and by conditions (2) and (5), there exist constants $M>0$ and $\delta>0$ such that 
$$|g(\mathbf z)|<M$$ 
and
$$\mathrm{Re} f_r(\mathbf z)<\mathrm{Re} f(\mathbf c)-\delta$$
on $S\setminus U.$ Then
\begin{equation}\label{A7}
\Big|\int_{S\setminus U}g(\mathbf z)e^{rf_r(\mathbf z)}d\mathbf z\Big|<MAe^{r(\mathrm{Re}f(\mathbf c)-\delta)}=O\Big(e^{r(\mathrm{Re}f(\mathbf c)-\delta)}\Big).
\end{equation}
In Figure \ref{sad} below, the shaded region is where $\mathrm{Re}(-\sum_{i=1}^nZ_i^2)<0.$ In $\overline{\psi^{-1}(U)},$ there is a homotopy $H$ from $\overline{\psi^{-1}(S\cap U)}$ to $[-\epsilon,\epsilon]^n\subset \mathbb R^n$ defined by ``pushing everything down'' to the real part.  This is where we need condition (3). Let $S'=H(\partial \psi^{-1}(S\cap U)\times [0,1]).$ Then $\overline{\psi^{-1}(S\cap U)}$ is homotopic to $S'\cup [-\epsilon,\epsilon]^n.$
\begin{figure}[htbp]
\centering
\includegraphics[scale=0.5]{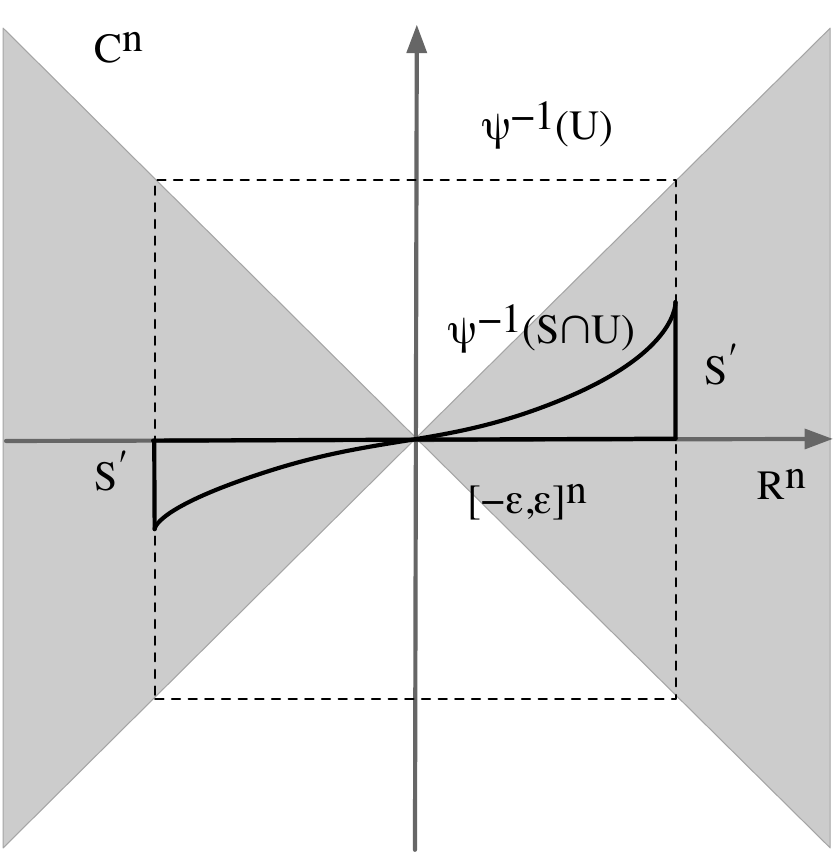}
\caption{}
\label{sad}
\end{figure}
Denote $\mathbf Z=(Z_1,\dots,Z_n).$ Then by analyticity, 
\begin{equation}\label{A8}
\begin{split}
\int_{S\cap U}g(\mathbf z)&e^{rf_r(\mathbf z)}d\mathbf z=\int_{\psi^{-1}(S\cap U)}g(\psi(\mathbf Z))\det \mathrm D(\psi(\mathbf Z))e^{rf_r(\psi(\mathbf Z))}d\mathbf Z\\
&=\int_{S'}g(\psi(\mathbf Z))\det \mathrm D(\psi(\mathbf Z))e^{rf_r(\psi(\mathbf Z))}d\mathbf Z+\int_{[-\epsilon,\epsilon]^n}g(\psi(\mathbf Z))\det \mathrm D(\psi(\mathbf Z))e^{rf_r(\psi(\mathbf Z))}d\mathbf Z.
\end{split}
\end{equation}
Since $\psi(S')\subset S\setminus U,$ 
\begin{equation}\label{A9}
\int_{S'}g(\psi(\mathbf Z))\det \mathrm D(\psi(\mathbf Z))e^{rf_r(\psi(\mathbf Z))}d\mathbf Z=\int_{\psi(S')}g(\mathbf z)e^{rf_r(\mathbf z)}d\mathbf z=O\Big(e^{r(\mathrm{Re}f(\mathbf c)-\delta)}\Big);
\end{equation}
and by the special case
\begin{equation*}
\begin{split}
&\int_{[-\epsilon,\epsilon]^n}g(\psi(\mathbf Z))\det \mathrm D(\psi(\mathbf Z))e^{rf_r(\psi(\mathbf Z))}d\mathbf Z\\
=&e^{rf(\mathbf c)}\int_{[-\epsilon,\epsilon]^n}g(\psi(\mathbf Z))\det \mathrm D(\psi(\mathbf Z))e^{r\big(-\sum_{i=1}^nZ_i^2+\frac{\upsilon_r(\psi(\mathbf Z)}{r^2}\big)}d\mathbf Z\\
=&e^{rf(\mathbf c)}g(\mathbf \psi(\mathbf 0))\det\mathrm D(\psi(\mathbf 0))\Big(\frac{\pi}{r}\Big)^{\frac{n}{2}}\Big(1+O\Big(\frac{1}{r}\Big)\Big)\\
=&\Big(\frac{2\pi}{r}\Big)^{\frac{n}{2}}\frac{g(\mathbf c)e^{rf(\mathbf c)}}{\sqrt{-\det\mathrm{Hess}(f)(\mathbf c)}}  \Big( 1 + O \Big( \frac{1}{r} \Big) \Big).
\end{split}
\end{equation*}
Together with (\ref{A7}), (\ref{A8}) and (\ref{A9}), we have the result.
\end{proof}

%%%%%%%%%%%%%%%%%%%%%%%%%%%%%%%%%%%%%%%%%%%%%%%%%%%%%%%%%%%%%%%%%%%%%%%%%%%%%%%%%%%%%%%%%%%%%%%%%%%%%%%%%%%%%%%%%%%%%%%%%%%%%%%%%%%%%%%%%%%%%%%%%%%%%%%%%%%%%%%%%%%%%%%%%%%%%%%%%%%%%%%%

\noindent
Ka Ho Wong\\
Department of Mathematics\\  Texas A\&M University\\
College Station, TX 77843, USA\\
(daydreamkaho@math.tamu.edu)
\\

\noindent
Tian Yang\\
Department of Mathematics\\  Texas A\&M University\\
College Station, TX 77843, USA\\
(tianyang@math.tamu.edu)

%%%%%%%%%%%%%%%%%%%%%%%%%%%%%%%%%%%%%%%%%%%%%%%%%%%%%%%%%%%%%%%%%%%%%%%%%%%%%%%%%%%%%%%%%%%%%%%%%%%%%%%%%%%%%%%%%%%%%%%%%%%%%%%%%%%%%%%%%%%%%%%%%%%%%%%%%%%%%%%%%%%%%%%%%%%%%%%%%%%%%%%%%%%%%%%%%%%%%%%%%%%%%%%%%%%%%%%%%%%%%%%%%%%%%%%%%%%%%%%%%%%%%%%%%%%%%%%%%%%%%%%%%%%%%%%%%%%%%%%%%%%%%%%%%%%%%%%%%%%%%%%%%%%%%%%%%%%%%%%%%%%%%%%%%%%%%%%%%%%%%%%%%%%%%%%%%%%%%%%%%%%%

\end{document}